\newcommand{\makeheading}[1]%
        {\hspace*{-\marginparsep minus \marginparwidth}%
         \begin{minipage}[t]{\textwidth}%
                {\large \bfseries #1}\\[-0.15\baselineskip]%
                 \rule{\columnwidth}{1pt}%
         \end{minipage}}
\theoremstyle{plain}
\newtheorem{lemma}{Lemma}
\newtheorem{theorem}{Theorem}
\theoremstyle{definition}
\newtheorem{definition}{Definition}
\def\semicolon{;}
\def\applytolist#1{
    \expandafter\def\csname multi#1\endcsname##1{
        \def\multiack{##1}\ifx\multiack\semicolon
            \def\next{\relax}
        \else
            \csname #1\endcsname{##1}
            \def\next{\csname multi#1\endcsname}
        \fi
        \next}
    \csname multi#1\endcsname}
\def\calc#1{\expandafter\def\csname c#1\endcsname{{\mathcal #1}}}
\def\bbc#1{\expandafter\def\csname bb#1\endcsname{{\mathbb #1}}}
\def\bfc#1{\expandafter\def\csname bf#1\endcsname{{\mathbf #1}}}
\def\sfc#1{\expandafter\def\csname s#1\endcsname{{\sf #1}}}
\def\fc#1{\expandafter\def\csname f#1\endcsname{{\mathfrak #1}}}
\title{Limiting Speed and Fluctuations for the Boundary Modified Contact Process}
\author{Andrew Heeszel}
\begin{document}

\maketitle
  \begin{abstract}
 The boundary modified contact process models an epidemic spreading in one dimension with two infection parameters, $\lambda_i$ and $\lambda_e$. Starting from a finite infected set, each edge of $\mathbb{Z}$ transmits the infection at rate $\lambda_i$ except for the rightmost and leftmost edges incident to infected vertices, which transmit the infection at rate $\lambda_e$. We show a strong law of large numbers and central limit theorem for the location of the rightmost infected vertex when $\lambda_i = \lambda_c$ and $\lambda_e = \lambda_c + \varepsilon$. We also show stretched exponential tail bounds in the fluctuations of the rightmost infected vertex, the extinction time of the process on the event of non-survival, and the probability of survival given the size of the initial infected region. Our results extend to the boundary modified contact process whenever $\lambda_c \leq \lambda_i < \lambda_e$, and solves an open problem first proposed by Andjel and Rolla in \cite{andjel2023contact}.
  \end{abstract}
\section{Introduction}
In this paper we show a law of large numbers and central limit theorem for the location of the rightmost infected vertex of the one-dimensional boundary modified contact process, answering an open question first proposed by Andjel and Rolla in \cite{andjel2023contact}. The boundary modified contact process was first studied by Durrett and Schinazi in \cite{durrett2000boundary}, and is formed by assigning a separate infection rate to the leftmost and rightmost infected edges of the standard one-dimensional contact process. We will let $\{ \xi_t \}_{t \geq 0}$ denote the boundary modified contact process, which is a jump process taking values in the set $\Sigma = \{ 0,1\}^{\mathbb{Z}}$. We say a site $x\in \mathbb{Z}$ is infected at time $t$ if $\xi_t(x) = 1$, and susceptible if $\xi_t(x) = 0$. Let $| \xi_t |$ be the number of infected sites in $\xi_t$. We also define the leftmost and rightmost infected sites to be,
\begin{equation}
    \begin{aligned}
        \mathcal{R}(\xi_t) &= \sup \{ x \in \mathbb{Z} \ | \ \xi_t(x) = 1 \}, \\
        \mathcal{L}(\xi_t) &= \inf \{x \in \mathbb{Z} \ | \ \xi_t(x) = 1 \}.
    \end{aligned}
\end{equation}
Infected sites recover at rate $1$ and immediately become susceptible to reinfection. Any infected site $x \notin \{ \mathcal{L}(\xi_t), \mathcal{R}(\xi_t) \}$ infects each of its neighbors at rate $\lambda_i$. The infected site $\mathcal{R}(\xi_t)$ infects its neighbor $\mathcal{R}(\xi_t) +1$ with rate $\lambda_e$, and neighbor $\mathcal{R}(\xi_t) -1$ with rate $\lambda_i$. Similarly, site $\mathcal{L}(\xi_t)$ infects its neighbor $\mathcal{L}(\xi_t) - 1$ with rate $\lambda_e$, and neighbor $\mathcal{L}(\xi_t) + 1$ with rate $\lambda_i$. This model aligns with the standard contact process whenever $\lambda_i = \lambda_e=\lambda$. 

We say $\xi_t$ survives when for all $t > 0$ there exists $x \in \mathbb{Z}$ so that $\xi_t(x)$ is infected. The standard one dimensional contact process is known to have a phase transition in its infection rate at the critical infection rate $\lambda_c$. Survival is possible whenever the infection rate $\lambda > \lambda_c$, and impossible whenever $\lambda \leq \lambda_c$ as shown in \cite{bezuidenhout1990critical}.

Durrett and Schinazi in \cite{durrett2000boundary} first showed the boundary modified contact process can survive on $\mathbb{Z}$ whenever $\lambda_e > 1$ and $\lambda_i$ is sufficiently large, and cannot survive when $\lambda_e \leq 1$. The authors similarly show survival is possible when $\lambda_i > \lambda_c$ when $\lambda_e$ is sufficiently large, and that survival is not possible when $\lambda_i < \lambda_c$. The authors additionally show the survival probability is an increasing function in $\lambda_i$ and $\lambda_e$. Andjel and Rolla in \cite{andjel2023contact} show that the model can survive in the case when $\lambda_i = \lambda_c$ and $\lambda_e > \lambda_c$, and in the case when $\lambda_i > \lambda_c$ and $\lambda_e = \lambda_c$.

We define the shift operator $\Psi: \Sigma \rightarrow \Sigma \cup \{ \dagger \}$ so that for any configuration $\eta \in \Sigma$,
\begin{equation}
    \Psi \eta(x) = \begin{cases} \eta \left( x - \mathcal{R}(\eta) \right) & \text{if }\eta \neq \emptyset \text{ and }  \mathcal{R}(\eta) \in \mathbb{Z} \\
    \dagger & \text{else} \end{cases}.
\end{equation}
Terra \cite{terra2024dynamic} shows that as $t \rightarrow \infty$ the shifted process $\Psi \xi_t$ converges weakly to an invariant measure $\tilde{\mu}$, whenever the initial state $\eta_0$ belongs to the set of states with infinitely many infections to the left of the origin and finitely many to the right,
\begin{equation}
    \Sigma^{\ominus} = \left\{ \eta \in \Sigma \ \mid \ \sum_{x > 0} \eta(x) < \infty, \ \sum_{x < 0} \eta(x) = \infty \right\}.
\end{equation}
Terra additionally shows that the invariant measure of the process as seen from the right edge, $\tilde{\mu}$, is supported on $\Sigma^{\ominus}$.

Let $\theta(\lambda_i, \lambda_e)$ be the survival probability of $\xi_t$ with infection rates $(\lambda_i, \lambda_e)$ and $\xi_0$ having a single infection at the origin. Terra  \cite{terra2024dynamic} also shows that when $\lambda_i > \lambda_c$, then $\xi_t$ cannot survive when $\lambda_e$ equals the critical boundary infection rate $\lambda_e^*(\lambda_i) = \inf \{ \lambda > 0 \mid \theta(\lambda_i, \lambda) > 0 \}$.

We study the regime when $\lambda_i = \lambda_c$, and $\lambda_e = \lambda_c + \varepsilon$ for $\varepsilon > 0$. This model is of interest since many of the standard tools in studying the supercritical contact process such as attractiveness, sub-additivity, and a direct comparison to supercritical oriented percolation are no longer applicable. We will often but not exclusively study $\xi_t$ with $\xi_0 \in \Sigma^{\ominus}$. In this regime the boundary modified contact process aligns with the right edge modified contact process $\{ \eta_t \}_{t \geq 0 }$, which only assigns infection rate $\lambda_e$ to the edge $(\mathcal{R}(\eta_t), \mathcal{R}(\eta_t) + 1)$, and rate $\lambda_i$ to all other regions. In this paper we will show a strong law of large numbers and central limit theorem for both $\mathcal{R}(\eta_t)$ and $\mathcal{R}(\xi_t)$. We will also provide stretched exponential estimates for the deviations of $\mathcal{R}(\eta_t)$ from its mean, and stretched exponential tails to the extinction time $\tau^{\emptyset}$ of either $\xi_t$ or $\eta_t$ on the event of non-survival, along with the probability of extinction given the initial outbreak size. These stretched exponential bounds are analogous to exponential bounds for the supercritical contact process obtained as a consequence of sub-additivity and the comparison to oriented percolation first shown in \cite{durrett1983supercritical}. 

We will also use the following lemma shown by Andjel and Rolla given in the proof of Theorem 5 of \cite{andjel2023contact}.
\begin{lemma}[Andjel, Rolla]
    \label{halflinesurvival}
    Let $\xi_t$ be either the boundary modified or right edge modified contact process with infection rates $\lambda_i = \lambda_c$ and $\lambda_e = \lambda_c + \varepsilon$ for some $\varepsilon > 0$. Then when $|\xi_0| < \infty$,
    \begin{equation}
        \mathbb{P}\left( \xi_t \text{ survives and } \liminf_t \frac{\mathcal{R}(\xi_t)}{t} > 0 \right) > 0.
    \end{equation}
\end{lemma}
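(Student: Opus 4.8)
The plan is to push the right front forward by a renormalisation argument in which the surplus boundary rate $\varepsilon$ manufactures a strictly positive drift, and then to compare the coarse-grained front with a supercritical oriented percolation. The point is that criticality of the bulk rate $\lambda_i=\lambda_c$ never has to be beaten globally: it suffices to control the process near the moving front, on the space and time scale of a single renormalisation block, and on that scale the front region behaves as if it were supercritical.

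Fix a large integer $L$. Call a configuration $\eta$ \emph{$L$-good} at $r=\mathcal{R}(\eta)$ if, in a window of width of order $L$ to the left of $r$, the restriction of $\eta$ dominates a fixed pattern carrying enough infected mass to start a block step; the simplest instance is ``$[r-L,r]$ entirely infected''. From any finite initial configuration the process reaches an $L$-good configuration within a bounded time with positive probability: with $m=\mathcal{R}(\xi_0)$, with probability at least $c^{L}>0$ the current rightmost site successively infects $m+1,\dots,m+L$ (each step at rate $\lambda_e$, since the firing site is always the current rightmost) while none of $m,\dots,m+L$ recovers during a unit time interval, so $[m,m+L]$ becomes entirely infected and nothing else in the configuration is relevant. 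The two processes in the statement have identical dynamics at and behind the right front, and the extra speed-up at the left edge of the boundary modified process does not interfere with the construction (it can only deposit additional infected sites in the block window), so from now on I work with the right edge modified process $\eta_t$ started from an $L$-good configuration.

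The renormalisation step is the following estimate, which is the core of the argument. There is $\rho=\rho(\varepsilon)>0$ such that, setting $T=cL/\rho$ with an appropriate constant $c$, for every $\delta>0$ and all large $L$ the following holds from any $L$-good configuration at position $r$: with probability at least $1-\delta$, at time $T$ the process is $L$-good at every position of a sub-interval of $[r+L,r+2L]$ of length at least $L$ (for the simplest pattern: $[r+L,r+2L]$ is entirely infected at time $T$). Translating in space and time and taking $\delta$ small, this realises the coarse-grained front as a supercritical oriented percolation in which an occupied site at level $n$ makes occupied a pair of consecutive sites at level $n+1$, with a finite range of dependence; such a percolation survives with positive probability and its rightmost occupied site advances by at least one per level. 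Reading this back, from an $L$-good configuration, with probability bounded below the front $\mathcal{R}(\eta_t)$ advances by at least $L$ in each time interval of length $T$ for all time, so on that event $\liminf_t \mathcal{R}(\eta_t)/t\ge L/T>0$ and $\eta_t$ survives; combining with the previous paragraph via the strong Markov property at the first appearance of an $L$-good configuration proves the lemma, for both processes.

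The main obstacle, as expected, is the renormalisation estimate at $\lambda_i=\lambda_c$, where the usual supercritical toolkit (subadditivity, direct comparison of the bulk with oriented percolation) is not available. It splits into: (i) over time $T=O(L)$ the front advances by at least $L$ with high probability; and (ii) over time $T$ the infected collar behind the front is re-created with high probability. For (i), the rightmost infected site fires to the right at rate $\lambda_e=\lambda_c+\varepsilon$, and as long as there is infected mass within bounded distance behind it the front is re-pushed quickly after each recovery; one can make the lower bound $\rho(\varepsilon)>0$ rigorous by dominating the near-front dynamics from below by an auxiliary finite-window Markov chain (bulk rate $\lambda_c$, the rightmost window site fired to the right at rate $\lambda_e$, a healthy site shifted in on the left whenever the window advances) which, for a large enough window, is positive recurrent with a strictly positive rightward drift — the drift coming entirely from $\lambda_e-\lambda_c>0$, so that monotonicity alone, which only yields drift $\ge 0$, does not suffice. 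For (ii) one uses that a critical contact process started from a long fully-infected interval stays uniformly dense (indeed contains long fully-infected sub-intervals) for a time at least polynomial in the length, so on the scale $T=O(L)$, well within that window, criticality has not yet manifested, while the advancing front continually refreshes the region immediately behind it. Rare blocks on which the collar is broken or the front lags correspond to closed sites of the comparison percolation, which the supercritical cluster tolerates as they are sparse; quantifying $\rho(\varepsilon)$ and calibrating the ``good'' pattern against the constant in $T=cL/\rho$ are where the bulk of the technical work lies.
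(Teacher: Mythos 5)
The paper does not actually prove Lemma~\ref{halflinesurvival}; it is quoted verbatim from the proof of Theorem~5 in \cite{andjel2023contact}, so there is no in-paper proof to compare against. Judged on its own, your renormalisation blueprint is a reasonable high-level strategy, but there are two genuine gaps that leave the core estimate unproved, and one auxiliary claim that is simply false.

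First, the ``simplest pattern'' cannot serve as the $L$-good criterion, and the false statement this rests on is load-bearing. You claim that from $[r-L,r]$ fully infected, with probability $1-\delta$ the block $[r+L,r+2L]$ is fully infected at the deterministic time $T=cL/\rho$, and more generally that the critical contact process started from a long fully infected interval ``contains long fully-infected sub-intervals'' for a polynomial time. Neither is true: at any fixed time $T\geq 1$ the probability that $L$ prescribed consecutive sites are simultaneously occupied decays exponentially in $L$, since most of those sites were first infected well before $T$ and each has an order-one chance of having flipped to susceptible by time $T$ without re-infection. So the parenthetical version of your renormalisation estimate is wrong, and you never specify a robust replacement pattern (some density or seed condition) whose regeneration probability you can actually control at $\lambda_i=\lambda_c$. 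Calibrating this is not a technicality you can defer; it is where the criticality of the bulk must genuinely be confronted.

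Second, the finite-window auxiliary chain that is supposed to deliver $\rho(\varepsilon)>0$ is asserted rather than constructed, and it is not clear it can work. When the rightmost infected site recovers, the displacement of $\mathcal{R}$ depends on how far back the next infected site is, which is not bounded by any fixed window once the collar thins; a lower-bounding chain that ``shifts in a healthy site on the left'' when the window advances only makes the collar leaner, which pushes against, not toward, positive drift. Monotonicity arguments give drift $\geq 0$ at best, as you note, and there is no computation showing the $\varepsilon$ excess overcomes the leftward jumps caused by collar erosion. Without this estimate, the oriented-percolation comparison never starts. For contrast, the mechanism the present paper actually deploys downstream of Lemma~\ref{halflinesurvival} (Lemmas~\ref{survivalsomespeed} and~\ref{dkrare}) sidesteps a bulk drift estimate entirely by combining the Duminil-Copin--Tassion--Teixeira box-crossing property of the critical contact process with a restart argument, which is a qualitatively different route and suggests that some quantitative input about the critical process (such as Lemma~\ref{boxcrossingdefinition}) is needed and is absent from your proposal.
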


We also leverage a box crossing property for critical two-dimensional oriented percolation. This was first shown by Duminil-Copin, Tassion, and Teixeira in \cite{duminil2018box}. The authors show as $n \rightarrow \infty$, there exists a scaling $w(n) \rightarrow \infty$ so that the probability that a $w(n) \times n$ box is crossed horizontally and vertically for critical two-dimensional oriented percolation is positive and independent of $n$. We say that the box $B = [0, n_1] \times [0, n_2]$ is crossed vertically when there exists an open path from two sites $x$ and $y \in [0, n_1]$ between times $0$ and $n_2$ contained in $B$. The box $B$ is crossed horizontally when there exist times $0\le t_1<t_2\le n_2$ so that there is an open path from $0$ to $n_1$ contained in $B$ between times $t_1$ and $t_2$. We will formally define open paths for the contact process in section 4. The box crossing property also holds for the critical contact process, giving us the following lemma,
\begin{lemma}[Duminil-Copin, Tassion, Teixeira]
    \label{boxcrossingdefinition}
    Let $\zeta_t$ be the critical contact process. There exists a scaling $w(n) \rightarrow \infty$ such that for any $n \in \mathbb{N}$ there is a probability of at least $p > 0$ independent of $n$ that a box of dimension $w(n) \times n$ is crossed both horizontally and vertically by $\zeta_t$. Furthermore there exist universal constants $c > 0$ and $\delta \in (0,1)$ so that for all $n \in \mathbb{N}$,
    \begin{equation}
        w(n) \leq c n^{1-\delta}.
    \end{equation}
\end{lemma}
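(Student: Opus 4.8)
The plan is to deduce Lemma~\ref{boxcrossingdefinition} from the box-crossing estimates of Duminil-Copin, Tassion and Teixeira by transporting their argument into the graphical representation of the contact process. Recall that the critical contact process $\zeta_t$ can be built on $\mathbb{Z}\times[0,\infty)$ from independent Poisson processes: rate-$1$ ``recovery marks'' on each time line $\{x\}\times[0,\infty)$, and rate-$\lambda_c$ ``infection arrows'' on each oriented edge. A pair $(y,t)$ is infected from $(x,0)$ exactly when there is an open path from $(x,0)$ to $(y,t)$, that is, a path moving upward in time along time lines without hitting a recovery mark and jumping across infection arrows in their direction, and a space--time box $B=[0,n_1]\times[0,n_2]$ is crossed when such a path, staying inside $B$, connects the relevant faces, as will be formalized in Section~4. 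Viewed this way, the critical contact process is a continuum space--time percolation model carrying exactly the structural features the RSW machinery of \cite{duminil2018box} uses: the FKG inequality for increasing events (which holds for the Poisson graphical structure), a time-reversal and reflection symmetry coming from self-duality of the contact process, and sharpness of the phase transition at $\lambda_c$. I would therefore run the argument of \cite{duminil2018box}, which is written for oriented percolation on $\mathbb{Z}^2$, with only cosmetic modifications; as noted there, the same reasoning applies to the critical contact process.

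Concretely, the first step is an RSW/quasi-multiplicativity statement: there is $\rho>0$ so that the probability of a ``hard'' (horizontal) crossing of a $2n\times n$ box is at least $\rho$ times a fixed power of the probability of an ``easy'' (vertical) crossing of an $n\times n$ box, uniformly in $n$. The second step is an a priori polynomial lower bound, $\mathbb{P}(n\times n\text{ box crossed})\ge n^{-\alpha}$ for a universal $\alpha$, which at criticality follows from sharpness together with a second-moment argument. Iterating the RSW step, each doubling of the width costs only a bounded factor, so after $k$ iterations a box of width $2^{k}n$ and height $n$ is crossed with probability at least $c^{k}n^{-\alpha}$; taking $k$ of order $\delta\log_2 n$ keeps this bounded below by a constant while forcing the width up to order $n^{1-\delta}$. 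This reproduces both the uniform lower bound $p>0$ and the polynomial bound $w(n)\le c\,n^{1-\delta}$.

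I expect the main obstacle to be transcription rather than new ideas: one must check that each ingredient of the planar argument has a faithful analogue for the continuum space--time percolation of the contact process, in particular the correct notion of dual crossing (a box fails to be crossed horizontally precisely when an associated dual region is crossed vertically, in the oriented sense), the FKG inequality in the Poisson setting, and the uses of ergodicity and of sharpness of $\lambda_c$. Once these are pinned down, the combinatorial core of \cite{duminil2018box} goes through unchanged. A possible alternative is to discretize time at scale $\epsilon$, reducing to a genuine discrete oriented percolation model and then letting $\epsilon\to0$, but this would require controlling that the discretized model remains exactly critical in the limit, which is itself delicate, so the direct graphical-representation route seems cleaner.
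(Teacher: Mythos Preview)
The paper does not prove Lemma~\ref{boxcrossingdefinition}; it is stated as a result of Duminil-Copin, Tassion and Teixeira and simply cited from \cite{duminil2018box} (see the discussion immediately preceding the lemma). Your proposal is therefore not being compared against any argument in the paper: the author treats this lemma as a black box and uses only its conclusion.

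That said, your sketch is a reasonable high-level summary of the strategy in \cite{duminil2018box}, and there is nothing obviously wrong with it as an outline. Two cautions if you actually want to write this out. First, the oriented nature of the model means there is no genuine duality between horizontal and vertical crossings in the planar RSW sense; the argument in \cite{duminil2018box} is more delicate than a standard RSW iteration and relies on a specific renormalization and quasi-multiplicativity scheme tailored to oriented percolation, so ``each doubling of the width costs only a bounded factor'' is not quite the right mental picture. Second, the polynomial bound $w(n)\le c\,n^{1-\delta}$ is not a direct by-product of an RSW bootstrap but comes from the a priori power-law bounds established in that paper (their Theorem~1.1); you would need to trace through how those bounds translate into control of the characteristic spatial scale $w(n)$. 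For the purposes of the present paper, however, none of this is needed: you may simply cite the result.
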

The function $w(t)$ scales on the same order of magnitude as the right edge process $\mathcal{R}(\zeta_t)$ of the critical contact process when $(-\infty, 0]$ is initially infected. Using an identical argument as in (4.3) of \cite{duminil2018box} we have the following lemma.
\begin{lemma}[Duminil-Copin, Tassion, Teixeira]   
    \label{boxcrossrt}
    Let $\zeta_t$ denote the critical contact process with initial configuration $\zeta_0 = (-\infty, 0]$. Then there exist universal constants $c_1$, $c_2 > 0$ so that for any $y > 0$,
    \begin{equation}
        \mathbb{P}\left( \sup_{0 \leq s \leq t} \left| \mathcal{R}(\zeta_s) \right| > y t^{1-\delta} \right) \leq c_1 \exp\left( - c_2 y \right).
    \end{equation}
\end{lemma}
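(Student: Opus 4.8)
Since $w(t)\le c\,t^{1-\delta}$, for any $K>0$ one has $\{\sup_{0\le s\le t}|\mathcal R(\zeta_s)|>K\,w(t)\}\supseteq\{\sup_{0\le s\le t}|\mathcal R(\zeta_s)|>(K/c)\cdot c\,t^{1-\delta}\}$, so it suffices to prove a bound of the form $\mathbb P(\sup_{0\le s\le t}|\mathcal R(\zeta_s)|>K\,w(t))\le c_1 e^{-c_2 K}$, uniformly in $t$; substituting $K=\lceil y/c\rceil$ then gives the claim after relabelling constants (and for small $y$, hence small $K$, the asserted inequality is vacuous once $c_1$ is taken large, which also dispenses with small $t$ where $w(t)$ may be $O(1)$). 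Split the event into the \emph{right} excursion $\{\sup_{s\le t}\mathcal R(\zeta_s)>K\,w(t)\}$ and the \emph{left} excursion $\{\inf_{s\le t}\mathcal R(\zeta_s)<-K\,w(t)\}$ and bound each separately.

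\textbf{Right excursion.} Introduce the hitting times $\sigma_0=0$ and $\sigma_j=\inf\{s\ge 0:\mathcal R(\zeta_s)\ge j\,w(t)\}$ for $j=1,\dots,K$. On the event $\{\sup_{s\le t}\mathcal R(\zeta_s)>K\,w(t)\}$ one has $\sigma_K\le t$, and since $\sum_{j=1}^{K}(\sigma_j-\sigma_{j-1})\le t$, at least $K/2$ of the increments satisfy $\sigma_j-\sigma_{j-1}\le 2t/K$. Using the graphical representation and attractiveness, the evolution of $\zeta$ after time $\sigma_{j-1}$ is dominated by the critical contact process started at that time from the full half-line $(-\infty,\mathcal R(\zeta_{\sigma_{j-1}})]$, whose right-edge displacement is distributed as $\mathcal R(\zeta'_u)-\mathcal R(\zeta'_0)$ for a fresh half-line-started copy $\zeta'$; hence $\mathbb P(\sigma_j-\sigma_{j-1}\le 2t/K\mid\mathcal F_{\sigma_{j-1}})\le\mathbb P(\sup_{u\le 2t/K}\mathcal R(\zeta'_u)\ge w(t))$. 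The point is that a fresh critical process fails to advance its edge by $w(t)$ within a time window that is a large multiple shorter than the critical time scale with probability bounded away from $0$, uniformly; this is exactly the renormalization estimate underlying $(4.3)$ of \cite{duminil2018box}, obtained from the box-crossing property of Lemma \ref{boxcrossingdefinition} (a horizontal crossing of a box of width $w(t)$ but height $\ll t$, i.e.\ of aspect ratio far from critical, has probability at most $1-p'$ for some $p'>0$). Combining over the $\binom{K}{\ge K/2}$ choices of which increments are short yields $\mathbb P(\sigma_K\le t)\le\binom{K}{\lceil K/2\rceil}(1-p')^{K/2}$, which is exponentially small in $K$.

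\textbf{Left excursion.} Restrict the graphical representation to the half-line $\{x\ge -K w(t)\}$ and to the initial infection $[-Kw(t),0]$, obtaining a process $\check\zeta$ with $\check\zeta_s\subseteq\zeta_s$ and $\check\zeta_s\subseteq[-Kw(t),\infty)$ for all $s$. If $\mathcal R(\zeta_s)<-Kw(t)$ for some $s\le t$, then $[-Kw(t),\infty)$ is entirely healthy in $\zeta_s$, forcing $\check\zeta_s=\emptyset$; thus $\{\inf_{s\le t}\mathcal R(\zeta_s)<-Kw(t)\}\subseteq\{\check\tau^\emptyset\le t\}$. Now place the $K$ space-time boxes $B_j=[-jw(t),-(j-1)w(t)]\times[0,t]$ for $j=1,\dots,K$: they are spatially disjoint, hence vertically crossed by the restricted process independently, and if any $B_j$ is vertically crossed then (its bottom lies in $\check\zeta_0$ and the crossing path stays inside the admissible region) $\check\zeta_t\neq\emptyset$. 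Therefore $\{\check\tau^\emptyset\le t\}$ forces no $B_j$ to be vertically crossed, and by Lemma \ref{boxcrossingdefinition} and independence $\mathbb P(\check\tau^\emptyset\le t)\le(1-p)^K$. A union bound over the two excursions completes the proof.

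\textbf{Main obstacle.} Everything except the bound $\mathbb P(\sup_{u\le 2t/K}\mathcal R(\zeta'_u)\ge w(t))\le 1-p'$ is routine bookkeeping with the graphical representation; the content there — that an edge displacement of a full critical width within a time window shorter than the critical time scale by a large factor is unlikely, \emph{uniformly in the underlying scale} — is precisely the renormalization argument of $(4.3)$ in \cite{duminil2018box}, and I would simply invoke it, rather than reprove it, once the restart structure above is set up.
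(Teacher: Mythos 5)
The paper gives no proof of this lemma beyond citing (4.3) of Duminil-Copin, Tassion, Teixeira, so your sketch is an attempt to reconstruct that argument rather than a parallel to anything in the text. Your left-excursion bound is essentially correct: restricting to $[-Kw(t),\infty)$ and noting that any vertically crossed column $B_j$ forces $\check\zeta_t\neq\emptyset$ gives $(1-p)^K$ (up to the minor point that adjacent columns share the recovery clock at their common boundary site, so strict independence needs the columns separated by one site, which is harmless).

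The right-excursion bookkeeping, however, has a real gap. You bound $\mathbb{P}(\sigma_K\le t)$ by $\binom{K}{\lceil K/2\rceil}(1-p')^{K/2}$. Since $\binom{K}{\lceil K/2\rceil}$ grows like $2^K$, this decays exponentially in $K$ only when $1-p'<1/4$, i.e.\ $p'>3/4$, which the box-crossing property does not supply. The union over which half of the increments is short is both unnecessary and damaging. The correct structure is a straight iteration of the strong Markov property at each level: $\{\sigma_K\le t\}\subseteq\{\sigma_{K-1}\le t\}$, and on $\{\sigma_{K-1}\le t\}$ one has $\sigma_K-\sigma_{K-1}\le t$, so by domination at the right edge
\begin{equation}
\mathbb{P}\bigl(\sigma_K\le t\bigr)\le\mathbb{P}\Bigl(\sup_{u\le t}\mathcal{R}(\zeta'_u)\ge w(t)\Bigr)\cdot\mathbb{P}\bigl(\sigma_{K-1}\le t\bigr),
\end{equation}
and iterating gives $\mathbb{P}(\sigma_K\le t)\le(1-p')^K$ with no binomial factor, using only the one-step estimate at time scale $t$ (not $2t/K$). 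Note also that both your version and this one need the box-crossing property to supply an \emph{upper} bound $1-p'$ on the probability of a horizontal crossing of a $w(t)\times t$ box, which is part of what makes $w(t)$ the right scale in the cited paper, but is not literally asserted in the paper's statement of Lemma \ref{boxcrossingdefinition}; you should flag that you are using it.
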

\section{Main Results}

We answer the conjecture in \cite{andjel2023contact} by providing both a law of large numbers and central limit theorem for the right edge process of the boundary modified contact process when $\lambda_e > \lambda_i \geq \lambda_e$. Our results are in the setting where the initial infected region is in the half space $\Sigma^{\ominus}$ and is applicable to both $\xi_t$ and $\eta_t$. We show that the asymptotic speed of the right edge converges to the constant,
\begin{equation}
    \alpha = \mathbb{E}\left( \mathcal{R}(\tilde{\eta}_1)\right),
\end{equation}
where $\tilde{\eta}_t$ is the right edge modified contact process with $\tilde{\eta}_0$ sampled via $\tilde{\mu}$. We now state the law of large numbers and central limit theorem. 
\begin{theorem}
    \label{mainthmlln}
    Let $\{ \eta_t \}_{t \geq 0}$ be the right edge modified contact process with infection rates $\lambda_i = \lambda_c$, $\lambda_e = \lambda_c + \varepsilon$ for $\varepsilon >0$, and $\eta_0 \in \Sigma^{\ominus}$. Then almost surely as $t \rightarrow \infty$,
    \begin{equation}
        \frac{\mathcal{R}(\eta_t)}{t} \rightarrow \alpha.
    \end{equation}
    Where the constant $\alpha = \mathbb{E}\left( \mathcal{R}(\tilde{\eta}_1) \right)$ additionally satisfies $\alpha \geq \varepsilon$.
\end{theorem}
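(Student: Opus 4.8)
The plan is to realize $\mathcal{R}(\eta_t)$ as an additive functional of the environment-seen-from-the-right-edge process $\Psi\eta_t$, and then invoke an ergodic theorem for that process together with the convergence result of Terra. First I would set up the Markov chain $\zeta_t := \Psi\eta_t$ taking values in $\Sigma^\ominus$; by Terra's result this chain converges weakly to $\tilde\mu$, and one would want to upgrade this to a statement that $\tilde\mu$ is the unique invariant measure and that the chain is ergodic. The key identity is the telescoping decomposition $\mathcal{R}(\eta_t) - \mathcal{R}(\eta_0) = \sum_{k=1}^{\lfloor t\rfloor}\big(\mathcal{R}(\eta_k) - \mathcal{R}(\eta_{k-1})\big) + o(t)$, where each increment $\mathcal{R}(\eta_k) - \mathcal{R}(\eta_{k-1})$ is a function of the path $(\zeta_s)_{k-1\le s\le k}$ and hence, for the stationary version, an identically distributed (though not independent) sequence with mean $\alpha = \mathbb{E}(\mathcal{R}(\tilde\eta_1))$. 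Applying Birkhoff's ergodic theorem to this sequence (under the stationary measure) gives $\mathcal{R}(\eta_t)/t \to \alpha$ a.s.; transferring from the stationary initial law to an arbitrary $\eta_0 \in \Sigma^\ominus$ is then handled by a coupling argument showing that two copies started from different configurations in $\Sigma^\ominus$ agree in a neighborhood of the right edge after a finite (a.s.) time, so the two right-edge trajectories differ by $o(t)$.

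The two main technical points are: (i) establishing \emph{integrability} of the increment, i.e. $\mathbb{E}|\mathcal{R}(\tilde\eta_1)| < \infty$, so that $\alpha$ is well-defined and Birkhoff applies, and (ii) establishing \emph{ergodicity} (not just stationarity) of $\tilde\mu$. For (i), the right edge can move right by at most a Poisson number of steps in unit time (bounded by the total number of infection events along the boundary edge, which is stochastically dominated by a Poisson clock of rate $\lambda_e$), giving an exponential upper tail; the left movement of the edge is controlled because when the edge recovers the new edge lies at the previous position of the next infected site, and large backward jumps require a long stretch of the configuration near the edge to be simultaneously empty, which has stretched-exponential probability under the graphical-representation estimates — here Lemma \ref{boxcrossrt} and the comparison to the critical contact process are the relevant inputs. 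For (ii), I would argue that under the graphical representation the tail $\sigma$-algebra of $(\zeta_t)$ is trivial, e.g. via a coupling-from-the-past / successful-coupling argument: any two configurations in $\Sigma^\ominus$ can be coupled so that their images under $\Psi$ coincide after an a.s.-finite time, which forces any invariant measure to be unique and the dynamics to be mixing, hence ergodic.

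The lower bound $\alpha \ge \varepsilon$ I would obtain as follows. The edge-modified dynamics couples monotonically with a modified process in which the boundary edge rate is kept at $\lambda_e = \lambda_c + \varepsilon$ but the ``interior'' evolves as the critical contact process; comparing the net drift of the right edge to the drift it would have if only the single boundary edge were active, one sees that the boundary edge contributes a rightward push at rate $\lambda_e$ while recovery of the edge and interior recoveries can only be compensated. More concretely, I would write $\alpha$ as a time-average under $\tilde\mu$ of the instantaneous drift of $\mathcal{R}$, decompose that drift into the boundary-edge infection term (rate exactly $\lambda_e$, displacement $+1$) minus the terms coming from the recovery of the current rightmost site, and show the latter contributes at most $\lambda_c = \lambda_i$ in expectation — intuitively because, for the \emph{critical} interior dynamics, the edge process has asymptotic speed zero, so the ``critical part'' of the drift vanishes in the stationary average and only the excess $\lambda_e - \lambda_c = \varepsilon$ survives. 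Making this last comparison rigorous (relating the stationary drift under $\tilde\mu$ to the zero-speed statement for the critical contact process, rather than merely bounding it crudely) is the step I expect to be the main obstacle; a clean alternative is to use Lemma \ref{halflinesurvival} together with the already-established a.s.\ convergence to conclude $\alpha > 0$, and then sharpen the constant to $\varepsilon$ by the drift decomposition above, possibly invoking a renewal/regeneration structure for $\mathcal{R}(\eta_t)$ in the supercritical-boundary regime.
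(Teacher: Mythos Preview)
Your route to the almost-sure convergence is different from the paper's but in principle sound: you invoke Birkhoff for the stationary sequence of unit increments, whereas the paper first establishes the stretched-exponential large-deviation estimate of Theorem~\ref{mainthmrttail} and then obtains the SLLN by Borel--Cantelli along integers, filling in between integers via Lemma~\ref{largerdeviationincrement}. Both approaches rest on the same coupling infrastructure --- the regeneration time $T$ of Section~5.3 together with attractiveness at the right edge --- which you would use to prove ergodicity and to transfer from $\tilde\mu$ to a general $\eta_0\in\Sigma^\ominus$, and which the paper uses to prove the tail bounds. The paper's route is heavier but yields the quantitative estimates needed downstream for the CLT; yours would give only the limit. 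Your integrability sketch for the increment is loose but repairable along the lines of Lemma~\ref{largerdeviationincrement}.

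The lower bound $\alpha \ge \varepsilon$, however, has a real gap. Your drift decomposition gives $\alpha = \lambda_e - \mathbb{E}_{\tilde\mu}[G]$, with $G$ the gap from $\mathcal{R}$ to the next infected site, and you want to argue that the ``critical part'' of this drift vanishes, i.e.\ that $\mathbb{E}_{\tilde\mu}[G]\le\lambda_c$. But $\tilde\mu$ is the stationary measure for the \emph{boosted} dynamics, not the critical one; the zero-speed statement for the critical contact process tells you nothing directly about $\mathbb{E}_{\tilde\mu}[G]$, and since the boundary-modified process is not attractive you cannot simply dominate one stationary edge profile by the other. (It is not even clear that the seen-from-the-edge process for the \emph{critical} contact process has a limiting law to compare against.) The paper avoids this issue entirely: it works with the deterministic initial condition $(-\infty,0]$, introduces intermediate processes $\bar\eta_{i,t}$ that use only the first $i$ rings of the boost clock $N_{e2}$ and thereafter evolve critically, and applies Liggett's monotonicity inequality (Lemma~\ref{liggettedgespeedsetup}) to show that each successive boost raises $\mathbb{E}[\mathcal{R}]$ by at least $1$. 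Summing over the Poisson number of boosts gives $\mathbb{E}[\mathcal{R}(\bar\eta_t)]\ge \varepsilon t + O(t^{1-\delta})$, and uniform integrability then passes this to $\alpha$.
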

\begin{theorem}
    \label{mainthmbrownianmotion}
    Let $\{ \eta_t \}_{t \geq 0}$ be the right edge modified contact process with infection rates $\lambda_i = \lambda_c$, $\lambda_e = \lambda_c + \varepsilon$ for $\varepsilon >0$, and $\eta_0 \in \Sigma^{\ominus}$. Then as $n \rightarrow \infty$
    \begin{equation}
        \left\{ \frac{1}{\sqrt{n}} \left( \mathcal{R}(\eta_{nt}) - \alpha n t  \right) \right\}_{t \geq 0} \Rightarrow \{ W_t  \}_{t \geq 0}.
    \end{equation}
    Where $\{ W_t \}_{t \geq 0 }$ is Brownian motion with a drift coefficient $\sigma^2 > 0$ independent of $\eta_0$, and $\Rightarrow$ denotes convergence in distribution.
\end{theorem}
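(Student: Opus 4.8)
The plan is to establish the central limit theorem via a regeneration-structure argument, building on the law of large numbers from Theorem~\ref{mainthmlln} and the stretched exponential deviation bounds that the paper advertises. The first step is to construct a sequence of \emph{regeneration times} $0 = T_0 < T_1 < T_2 < \cdots$ at which the process ``forgets'' its past: informally, $T_k$ is a time at which the right edge $\mathcal{R}(\eta_t)$ reaches a new running maximum in a way that decouples the configuration to the left of the edge from the future evolution. Concretely, I would use the graphical representation of $\eta_t$ together with Lemma~\ref{halflinesurvival} (which guarantees that from any finite seed the right edge escapes linearly with positive probability) to define a renewal event: after restarting from a single infection at the current right edge, the descendants of that infection survive and drive the edge forward forever, while simultaneously the original ``bulk'' behind the edge dies out or is irrelevant to the future maximum. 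A Borel--Cantelli / geometric-trials argument shows such events occur infinitely often, and the strong Markov property at these times yields that the increments $\bigl(T_{k+1} - T_k,\ \mathcal{R}(\eta_{T_{k+1}}) - \mathcal{R}(\eta_{T_k})\bigr)$ for $k \geq 1$ are i.i.d., with the $k = 0$ term handled separately since $\eta_0$ is only in $\Sigma^{\ominus}$.

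The second step is to obtain moment control on the regeneration increments. Here the stretched exponential tail bounds for the extinction time on non-survival, for the probability of survival given outbreak size, and for the deviations of $\mathcal{R}(\eta_t)$ from its mean (all stated as forthcoming results in the excerpt) combine to show that $T_{k+1} - T_k$ and $\mathcal{R}(\eta_{T_{k+1}}) - \mathcal{R}(\eta_{T_k})$ have stretched exponential tails, hence in particular finite second (indeed all) moments. This is the technical heart: one needs that a successful regeneration attempt, including the waiting time across failed attempts, does not cost too much time or displacement. Lemma~\ref{boxcrossrt} and Lemma~\ref{boxcrossingdefinition} control the spatial spread of the critical bulk on the scale $t^{1-\delta} = o(\sqrt{t})$, which is exactly what is needed to ensure failed attempts contribute negligibly and that the edge does not wander too far between regenerations.

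The third step is the assembly. With i.i.d.\ increments of finite variance, the classical Donsker invariance principle applied to the random walk $S_k = \mathcal{R}(\eta_{T_k})$ indexed by its ``clock'' $T_k$, together with the renewal-theory identity $\mathbb{E}(T_{k+1} - T_k) = \alpha^{-1}\,\mathbb{E}(\mathcal{R}(\eta_{T_{k+1}}) - \mathcal{R}(\eta_{T_k}))$ coming from Theorem~\ref{mainthmlln}, gives a functional CLT for the linearly-interpolated process at the regeneration times; a standard time-change argument (inverting $k \mapsto T_k$ via the law of large numbers for $T_k/k$) converts this to the statement for $\mathcal{R}(\eta_{nt})$ in the original time parametrization. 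The diffusion coefficient is
\begin{equation}
    \sigma^2 = \frac{\Var\bigl(\mathcal{R}(\eta_{T_2}) - \mathcal{R}(\eta_{T_1}) - \alpha (T_2 - T_1)\bigr)}{\mathbb{E}(T_2 - T_1)},
\end{equation}
which is manifestly independent of $\eta_0$ since only the $k \geq 1$ increments enter; positivity of $\sigma^2$ follows because the increment variance cannot vanish (the edge displacement and regeneration time are not deterministically proportional). The tightness needed to upgrade finite-dimensional convergence to convergence in $D[0,\infty)$ follows from the moment bounds on increments via a standard maximal inequality.

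The main obstacle I anticipate is \textbf{constructing the regeneration times correctly and verifying they are genuine renewal epochs}. Unlike the supercritical contact process, here the region behind the edge evolves as a \emph{critical} process, so it does not die out and cannot simply be ignored: the definition of the regeneration event must ensure that the future maximum is determined entirely by the forward cone of descendants of a single edge infection, uncoupled from the critical bulk. This requires a careful choice of space-time ``break points'' in the graphical representation — one must find a time and location such that (i) the edge is there, (ii) an infinite forward path emanates from it, and (iii) no path from the earlier configuration ever overtakes it — and then argue, using the $t^{1-\delta}$ spatial control from Lemma~\ref{boxcrossrt}, that such a configuration recurs with a quantitatively controlled cost. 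Getting the independence structure exactly right, including the measurability of the regeneration times and the i.i.d.\ property of post-regeneration increments, is where the bulk of the careful work lies.
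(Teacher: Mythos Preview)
Your regeneration approach is a plausible alternative, but it is \emph{not} the route the paper takes. The paper instead works with the stationary version $\tilde\eta_t$ started from $\tilde\mu$, shows that the integer-time increments $\Delta_n = \mathcal{R}(\tilde\eta_n)-\mathcal{R}(\tilde\eta_{n-1})$ form a stationary $\alpha$-mixing sequence with stretched-exponential mixing coefficient (this is where Theorem~\ref{mainthmsurvivaltime} and the coupling time $T$ of the auxiliary single-site process enter), and then invokes the Ibragimov--Davydov invariance principle for $\alpha$-mixing sequences directly. Continuous time is recovered from integer time via the increment bound, and the extension from $\tilde\mu$ to arbitrary $\eta_0\in\Sigma^{\ominus}$ uses that same coupling time $T$. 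The advantage of the paper's route is precisely that it sidesteps the obstacle you flag: one never needs genuine regeneration epochs at which the future is \emph{independent} of the past, only that the dependence decays fast, and the attractiveness-at-the-right-edge property delivers this almost for free once Theorem~\ref{mainthmsurvivaltime} is in hand.

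Two further remarks on your outline. First, your diagnosis of the obstacle is slightly off: the critical bulk not dying out is in fact harmless, because the attractiveness-at-the-right-edge coupling guarantees that once the auxiliary single-site process survives, the right edge is determined by it alone regardless of what the bulk does; the real subtlety in a Kuczek-style regeneration is that ``survives forever'' is not a stopping time, so the i.i.d.\ structure of the blocks must be argued with care. Second, your argument for $\sigma^2>0$ (``the increment variance cannot vanish'') is not adequate: a priori the centered increment over a regeneration block could be degenerate. The paper proves $\sigma^2>0$ by a separate conditional-variance argument in the style of Galves--Presutti and Durrett, partitioning on the values of certain renewal-type variables and exhibiting residual randomness in each cell of the partition; you would need something comparable.
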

When $\lambda_e > \lambda_i$ the boundary modified contact process is no longer attractive and the right edge $\mathcal{R}(\eta_t)$ is no longer subadditive. As a consequence we are no longer able to apply the subadditive ergotic theorem in a similar format as the standard contact process. We instead provide a stretched exponential tail bound to the extinction time $\tau^\emptyset$ on the event of non-survival. This coupled with the attractiveness property at the right edge allows us to deduce that the increments of $\mathcal{R}(\eta_t)$ are strongly mixing. To show this we use the box crossing property for critical oriented percolation and Lemma \ref{halflinesurvival} in order to form an event sufficient for survival that occurs with a high probability. As a result, we attain theorem \ref{mainthmsurvivaltime}.
\begin{theorem}
    \label{mainthmsurvivaltime}
    Let $\{ \xi_t \}_{t \geq 0}$ be either the boundary modified or right edge modified contact process with infection rates $\lambda_i = \lambda_c$, $\lambda_e = \lambda_c + \varepsilon$, with $\varepsilon >0$, and $|\xi_0| = n< \infty$. Let $\tau^\emptyset$ be the first hitting time of $\xi_t$ to the all susceptible configuration $\emptyset$. Then there exist universal constants $c$, $c'$, and $a>0$ independent of $n$ so that for all $t > 0$,
    \begin{equation}
    \mathbb{P}\left(t < \tau^\emptyset < \infty \right) \leq c \exp \left( -c' t^a\right),
    \end{equation}
    where the constant $a$ is independent of $\varepsilon$.
\end{theorem}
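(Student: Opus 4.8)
The plan is a restart argument powered by the box--crossing property: the heart of it is to fabricate a \emph{finite--time} event that already forces eventual survival and has probability stretched--exponentially close to one, after which the tail bound follows because $\{t<\tau^\emptyset<\infty\}$ is contained in the failure of that event. It is enough to argue for the right edge modified process $\eta_t$; the boundary modified process $\xi_t$ differs only in that its leftmost edge is also enhanced, and this plays no role below because every restart is carried out strictly to the right of the current rightmost infected vertex, where $\eta_t$ dominates the critical contact process from below --- so Lemmas~\ref{boxcrossingdefinition}--\ref{boxcrossrt} apply --- and is enhanced only at that vertex. The key point for the statement is that every estimate is seeded from a single infected site, the current rightmost one, so the constants cannot depend on $n=|\xi_0|$: on $\{t<\tau^\emptyset\}$ the configuration is nonempty at every time of $[0,t]$, hence a restart site is always available.

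The building block (Step~1) is a constant $p_0>0$, \emph{independent of $\varepsilon$}, such that from a single infected site at $x_0$ the process reaches, within any time $T$ and with conditional probability at least $p_0$ (uniformly in $T$ and in the past), a configuration dominating a translate of a ``wide'' state $G_T$ consisting of order $T^{1-\delta}$ infected sites inside an interval of length of order $T^{1-\delta}$; since everything to the right of $x_0$ is susceptible this is an event of the graphical representation inside the box of width $w(T)$ and height $T$ based at $x_0$, and it follows from the horizontal and vertical crossings of Lemma~\ref{boxcrossingdefinition} together with a short argument (e.g.\ letting the seed first infect a bounded neighbourhood and using attractiveness) anchoring such a crossing to the seed. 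In Step~2 I would partition $[0,t]$ into $\lfloor t^{\kappa}\rfloor$ consecutive sub--intervals of length of order $t^{1-\kappa}$ and run the Step~1 attempt on each, seeded from the then--current rightmost vertex; as the $i$--th attempt uses only graphical marks after the start of the $i$--th sub--interval, successive conditioning shows that on $\{t<\tau^\emptyset\}$ the probability that all of them fail is at most $(1-p_0)^{\lfloor t^{\kappa}\rfloor}\le e^{-c_1 t^{\kappa}}$. Thus, outside an event of probability at most $e^{-c_1 t^{\kappa}}$, there is a time $s\le t$ at which the configuration dominates a translate of $G_{t^{1-\kappa}}$, whose width is of order $t^{(1-\kappa)(1-\delta)}$.

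The remaining ingredient, which I expect to be the main obstacle, is Step~3: from any configuration dominating a translate of $G_\ell$ the process survives except with probability at most $e^{-c_2\ell}$. Lemma~\ref{halflinesurvival} is the only available source of survival --- each of the order $\ell$ well--separated sites of $G_\ell$ seeds a copy that survives with positive, $\varepsilon$--dependent probability --- and the work is to upgrade ``some copy survives'' to an error of the form $e^{-c_2\ell}$. Because we sit at criticality there is no comparison to a \emph{supercritical} oriented percolation to fall back on, so the upgrade has to be carried out scale by scale, chaining vertical box crossings (Lemma~\ref{boxcrossingdefinition}) to keep the infected region wide, repeatedly re--invoking Lemma~\ref{halflinesurvival} to re--extract survival, and throughout controlling the weak dependence among the seeded copies so that the failure probabilities genuinely multiply.

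Finally (Step~4), on $\{t<\tau^\emptyset<\infty\}$ either every Step~2 attempt failed, of probability at most $e^{-c_1 t^{\kappa}}$, or some attempt succeeded and, at a time $s\le t$, the configuration dominated a translate of $G_{t^{1-\kappa}}$, from which Step~3 bounds conditional extinction by $e^{-c_2 t^{(1-\kappa)(1-\delta)}}$. Choosing $\kappa$ to equalise the two exponents, $\kappa=(1-\delta)/(2-\delta)$, gives $\mathbb{P}(t<\tau^\emptyset<\infty)\le c\exp(-c' t^{a})$ with $a=(1-\delta)/(2-\delta)$, which depends only on the universal constant $\delta$ of Lemma~\ref{boxcrossingdefinition} --- in particular not on $\varepsilon$ --- while $c$ and $c'$ depend on $\varepsilon$ (through $p_0$ and Lemma~\ref{halflinesurvival}) but not on $n$.
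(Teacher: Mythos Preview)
Your high-level architecture --- a restart along the time axis, fed by box crossings and Lemma~\ref{halflinesurvival} --- is the paper's, and your Step~2 is essentially its restart construction. The genuine gap is Step~3. You propose seeding order-$\ell$ copies of the single-site process from the sites of $G_\ell$ and then ``controlling the weak dependence'' so that failure probabilities multiply, but the dependence here is not weak at all. The right-edge enhancement is carried by a single global clock $N_{e2}$ that at every instant boosts only the \emph{global} rightmost infected vertex: your seeded copies compete for it, only the current rightmost survivor ever sees it, and the others run at the critical rate and die almost surely. There is no attractiveness or subadditivity to fall back on, so turning a positive single-seed survival probability into a stretched-exponential extinction bound from $\ell$ seeds is not bookkeeping --- it is essentially Theorem~\ref{mainthmsurvivalsize}, which in the paper is obtained only \emph{as a corollary of} Theorem~\ref{mainthmsurvivaltime}. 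Your sketch therefore has a circularity.

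The paper sidesteps this by never using many sites at once: it tracks only the right-edge \emph{position}. From a single seed it first shows (Lemma~\ref{survivalsomespeed}) that on $\{\tau^\emptyset>t\}$ the right edge reaches $k\asymp t^{1-\delta/2}$ before time $t$ except with stretched-exponential probability; it then shows (Lemma~\ref{dkrare}) that reaching $k$ quickly forces survival up to an error $\exp(-c\,k^{\delta/2})$. The proof of the latter is a dyadic recursion over scales $k2^j$: at each scale one intersects with (i) a vertical critical box-crossing of width $k2^j$, which keeps $\mathcal R(\eta'_s)\ge 0$ over a long time window, and (ii) an event built from a \emph{single} auxiliary process $\eta^{t_{n,k2^j}}$ started at the current right edge, which via Lemma~\ref{halflinesurvival} pushes the right edge out to $k2^{j+1}$ during some sub-epoch. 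Because every stage is seeded from one site (the current right edge), attractiveness at the right edge makes the coupling clean and the failure probabilities sum geometrically in $j$. Combining gives the single-seed bound (Lemma~\ref{survivefunctnbound}); the restart you describe in Step~2 --- carried out in the paper as successive auxiliary processes $\eta^{T_i}$ until one survives, via Lemma~\ref{mixingsetuplemma} and the construction following it --- then finishes. The fix for your argument is to replace the many-seed Step~3 by this single-site, scale-by-scale construction.
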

As a corollary of Theorem \ref{mainthmsurvivaltime} and the box crossing property, we then get stretched exponential bounds on the probability of extinction based on the size of the initial infected set. \\
\begin{theorem}
    \label{mainthmsurvivalsize}
     Let $\{ \xi_t \}_{t \geq 0}$ be either the boundary modified or right edge modified contact process with infection rates $\lambda_i = \lambda_c$, $\lambda_e = \lambda_c + \varepsilon$, with $\varepsilon >0$, and $|\xi_0| = n< \infty$. Let $\tau^\emptyset$ be the first hitting time of $\xi_t$ to the all susceptible configuration $\emptyset$. Then there exists constants $c, c'$, and $a > 0$ so that,
     \begin{equation}
         \mathbb{P}_{\xi_0} \left( \tau^\emptyset < \infty \right) \leq c \exp \left( -c' n^a \right),
     \end{equation}
     where the constant $a$ is independent of $\varepsilon$.
\end{theorem}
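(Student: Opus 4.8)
The plan is to combine Theorem~\ref{mainthmsurvivaltime} with a decoupling of the $n$ initially infected sites into independent \emph{confined} critical contact processes, feeding into the latter the polynomial lower bound on survival that the box crossing property provides. Fix a small universal $\beta>0$ and take $t=t(n)$ of order $n^{\beta}$. Theorem~\ref{mainthmsurvivaltime} gives $\mathbb{P}_{\xi_0}(t<\tau^{\emptyset}<\infty)\le c\exp(-c't^{a})=c\exp(-c'n^{\beta a})$, so since $\mathbb{P}_{\xi_0}(\tau^{\emptyset}<\infty)=\mathbb{P}_{\xi_0}(\tau^{\emptyset}\le t)+\mathbb{P}_{\xi_0}(t<\tau^{\emptyset}<\infty)$, it remains to bound $\mathbb{P}_{\xi_0}(\tau^{\emptyset}\le t)=\mathbb{P}_{\xi_0}(\xi_t=\emptyset)$. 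Coupling $\xi_t$ through the graphical representation with the standard critical contact process $\zeta_t$ on the same clocks (the modified process only \emph{adds} infection events, on its boosted edges) gives $\xi_t\supseteq\zeta_t$, hence $\mathbb{P}_{\xi_0}(\xi_t=\emptyset)\le\mathbb{P}_{\xi_0}(\zeta_t=\emptyset)$; from here I work entirely with the standard critical contact process started from a set of $n$ sites.

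Next I would extract from $\xi_0$ a maximal $D$-separated subset $\{x_1<\dots<x_m\}$, where $D=D(t)$ is of order $t^{1-\delta}\log n$. Maximality forces every site of $\xi_0$ to lie within distance $D$ of this subset, so $n\le m(2D-1)$ and therefore $m\ge cn/(t^{1-\delta}\log n)$ \emph{uniformly} in the shape of $\xi_0$, whether its sites are tightly clustered or widely scattered. Put $W_i=(x_i-D/3,x_i+D/3)$; these intervals are pairwise disjoint, and let $\zeta^{(i)}$ be the critical contact process started from $\{x_i\}$ and confined to $W_i$. Since $\zeta^{(i)}$ is measurable with respect to the clocks in $W_i\times[0,t]$ only, the processes $\zeta^{(1)},\dots,\zeta^{(m)}$ are independent, and the same coupling as above gives $\zeta^{(i)}_t\subseteq\xi_t$ for every $i$. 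Consequently $\mathbb{P}_{\xi_0}(\xi_t=\emptyset)\le\prod_{i=1}^{m}\mathbb{P}(\zeta^{(i)}_t=\emptyset)$.

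To bound each factor I would first deduce a polynomial lower bound on single-site survival from Lemma~\ref{boxcrossingdefinition}: a vertical crossing of the $w(t)\times t$ box shows that the critical contact process from a fully infected interval of length $w(t)$ is nonempty at time $t$ with probability at least $p$, and then additivity and translation invariance give $\mathbb{P}(\zeta^{\{0\}}_t\ne\emptyset)\ge p/(w(t)+1)\ge c_0 t^{-(1-\delta)}$, using $w(t)\le ct^{1-\delta}$. Confining to $W_i$ costs almost nothing: applying Lemma~\ref{boxcrossrt} to $\zeta^{\{x_i\}}\subseteq\zeta^{(-\infty,x_i]}$ and its reflection, the unconfined process leaves $W_i$ before time $t$ with probability at most $n^{-C}$ once the logarithmic factor in $D$ is taken large, which is $o(t^{-(1-\delta)})$; hence $\mathbb{P}(\zeta^{(i)}_t\ne\emptyset)\ge\tfrac12 c_0 t^{-(1-\delta)}$. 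Therefore $\mathbb{P}_{\xi_0}(\xi_t=\emptyset)\le(1-\tfrac12 c_0 t^{-(1-\delta)})^{m}\le\exp(-\tfrac12 c_0\,m\,t^{-(1-\delta)})\le\exp(-c''n^{1-2\beta(1-\delta)}/\log n)$. Adding the bound $c\exp(-c'n^{\beta a})$ from Theorem~\ref{mainthmsurvivaltime} and choosing $\beta$ small enough (balancing $1-2\beta(1-\delta)$ against $\beta a$) yields $\mathbb{P}_{\xi_0}(\tau^{\emptyset}<\infty)\le c\exp(-c'n^{a'})$ for a universal $a'=a'(a,\delta)>0$, with the finitely many small $n$ absorbed into $c$; since $\delta$ is universal and $a$ is independent of $\varepsilon$, so is $a'$.

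The step I expect to demand the most care is the extraction/decoupling: the count $m$ must come out as a genuine positive power of $n$ simultaneously for tightly clustered configurations (where one is effectively splitting a long infected interval into many disjoint boxes) and for widely scattered ones (where the windows $W_i$ must be taken wide enough, controlled via Lemma~\ref{boxcrossrt}, that the confinement error is truly negligible against the polynomially small survival probability), and all the constants and $\log n$ factors must be bookkept so that the final exponent is a clean power of $n$ with no dependence on $\varepsilon$.
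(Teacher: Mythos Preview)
Your proposal is correct and follows the same high-level decomposition as the paper: split $\mathbb{P}(\tau^{\emptyset}<\infty)$ into $\{\tau^{\emptyset}\le t\}$ and $\{t<\tau^{\emptyset}<\infty\}$, use Theorem~\ref{mainthmsurvivaltime} (equivalently the bound on the regeneration time $T$) for the second piece, and dominate the first piece by the extinction probability of the \emph{critical} contact process from $n$ sites. The difference is entirely in how the critical piece $\mathbb{P}_{\xi_0}(\zeta_t=\emptyset)$ is handled. The paper takes $t=n$ and invokes Liggett's coupling from~\cite{liggett1985interacting}, which says that any $n$-point initial set dominates the interval $[0,n]$ in cardinality; this reduces to $\mathbb{P}(\zeta^{[0,n]}_n=\emptyset)$, and then one simply tiles $[0,n]$ by $\lfloor c^{-1}n^{\delta}\rfloor$ disjoint $w(n)\times n$ boxes as in Lemma~\ref{boxcrossingproportion} to get $\mathbb{P}(\zeta^{[0,n]}_n=\emptyset)\le c_5\exp(-c_6 n^{\delta})$. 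Your route instead extracts a $D$-separated subset and runs independent confined single-site processes, trading Liggett's coupling for a hands-on decoupling and a polynomial single-site survival bound derived from Lemma~\ref{boxcrossingdefinition}. Both work; the paper's argument is shorter, avoids the $\log n$ bookkeeping and the choice of $\beta$, and produces the cleaner exponent $n^{\delta}$ for the critical piece, while your argument has the virtue of being self-contained (it does not need the Liggett comparison, only monotonicity and the box-crossing lemmas already stated).
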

We use the results of Theorem \ref{mainthmsurvivaltime} and an attractiveness property at the right edge first used in \cite{terra2024dynamic} to show the increments of $\mathcal{R}(\eta_t)$ are strongly mixing at a stretched exponential rate, and gain control on the tails of the increments of $\mathcal{R}(\eta_t)$ via a similar argument to Galves and Presutti in \cite{galves1987edge}. This allows us to show stretched exponential deviations for $\mathcal{R}(\tilde{\eta}_t)$ from its mean when $\tilde{\eta}_0$ is sampled via $\tilde{\mu}$, as well as the strong law of large numbers and central limit theorem for $\eta_t$ when $\eta_0 \in \Sigma^{\ominus}$. We now state our large deviation inequality for $\mathcal{R}(\tilde{\eta}_t)$.
\begin{theorem}
    \label{mainthmrttail}
        Let $\tilde{\eta}_t$ be the right edge modified contact process with infection rates $\lambda_i = \lambda_c$, $\lambda_e = \lambda_c + \varepsilon$ for $\varepsilon > 0$, and that $\tilde{\eta}_0$ is sampled from $\tilde{\mu}$. Then for any $\gamma \in [0, \frac{1}{3})$ and $b > 0$ there exist universal constants $c, c'$ and $a>0$ so that for all $t > 0$,
    \begin{equation}
        \mathbb{P} \left( \left| \mathcal{R}(\tilde{ \eta}_t) - \alpha t\ \right| > b t^{1-\gamma} \right) \leq c \exp\left( -c' t^a \right),
    \end{equation}
    where the constant $a$ only depends on $\gamma$ and is independent of $\varepsilon$.
\end{theorem}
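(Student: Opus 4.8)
The plan is to pass from the continuous-time statement to a large-deviation estimate for a stationary, weakly dependent sequence and then run a blocking argument in the spirit of Galves and Presutti \cite{galves1987edge}. Since $\tilde\mu$ is invariant for the process seen from the right edge, the unit increments $X_k := \mathcal{R}(\tilde\eta_k) - \mathcal{R}(\tilde\eta_{k-1})$, $k\ge 1$, form a stationary sequence with $\mathbb{E}[X_1]=\alpha$, and $\mathcal{R}(\tilde\eta_t)-\alpha t = \sum_{k=1}^{\lfloor t\rfloor}(X_k-\alpha) + O\big(\sup_{0\le u\le 1}|\mathcal{R}(\tilde\eta_{\lfloor t\rfloor+u})-\mathcal{R}(\tilde\eta_{\lfloor t\rfloor})|\big)$, where the remainder is $O(1)$ with an exponential tail by the Poisson bound noted below. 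Hence it suffices to bound $\mathbb{P}\big(|\sum_{k=1}^{n}(X_k-\alpha)| > b\, n^{1-\gamma}\big)$ stretched-exponentially in $n$.

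The two facts about $\{X_k\}$ that drive everything come from a regeneration (``break point'') structure. Using Theorem \ref{mainthmsurvivaltime} together with the attractiveness at the right edge from \cite{terra2024dynamic} and the box-crossing estimate Lemma \ref{boxcrossingdefinition}, one produces fresh infections which, with positive probability, survive forever and thereafter determine $\mathcal{R}(\tilde\eta_\cdot)$ independently of the past (Lemma \ref{halflinesurvival} provides the survival-with-positive-speed event the box crossing makes likely), and which otherwise die out within a time whose tail is stretched-exponential by Theorem \ref{mainthmsurvivaltime}; retrying until success, the spacings between break points have stretched-exponential tails. From this one extracts: (a) tail bounds on a single increment — the right edge of $\tilde\eta_t$ advances only when the current rightmost site transmits across its right boundary edge, an event of rate $\lambda_e$, so $X_1$ is dominated by a $\mathrm{Poisson}(\lambda_e)$ variable (exponential upper tail), while the leftward displacement over the break-point interval containing step $k$ is controlled by Lemma \ref{boxcrossrt} applied over that interval, and combining with the stretched-exponential spacing tail gives $\mathbb{P}(|X_1|>s)\le c\exp(-c's^{a_1})$; and (b) strong mixing of $\{X_k\}$ with coefficients $\le c\exp(-c'n^{a_2})$, since past and future decouple across a break point.

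Granting (a) and (b), the conclusion follows from a blocking scheme: partition $\{1,\dots,n\}$ into $m=n/L$ blocks of length $L=n^{\rho}$ separated by mixing buffers of length $g\ll L$ chosen so that $m\exp(-c'g^{a_2})$ is negligible; the block sums are then essentially independent, each deviating from its mean by more than $x$ with probability at most $\exp(-c\min(x^2/L,\,x^{a_1}))$ by the sub-Weibull concentration coming from (a), and the buffers contribute only $O(mg)$. Allotting $\sim L n^{-\gamma}$ of the deviation to each of the $\sim m$ blocks, requiring this to dominate the block's typical fluctuation while leaving room for the buffers, and optimizing $\rho$, one gets $c\exp(-c'n^{a})$ with $a=a(\gamma)>0$; it is precisely this scale-matching among $L$, $g$, the per-block allowance $Ln^{-\gamma}$, and the box-crossing spatial scale that forces the restriction $\gamma<\tfrac13$, and since all the exponents $a_1,a_2$ produced above are independent of $\varepsilon$, so is $a$. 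Undoing the reduction of the first paragraph (at the cost of the $O(1)$ exponential-tail remainder, absorbed into $c,c'$) yields the stated bound for real $t$.

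The main obstacle is the construction of the regeneration structure and the proof that break-point spacings have stretched-exponential tails: because for $\lambda_e>\lambda_i$ the process is neither attractive nor subadditive and admits no direct oriented-percolation comparison, the break points must be manufactured by hand, and it is exactly here that Theorem \ref{mainthmsurvivaltime} (ruling out long-lived but eventually-extinct excursions) and the box-crossing Lemma \ref{boxcrossingdefinition} (supplying a high-probability event on which a fresh infection anchors a break point) must be combined. Everything else — the increment tails, the strong mixing, and the blocking estimate — rests on that step.
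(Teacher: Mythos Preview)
Your overall strategy—reduce to integer times, establish stretched-exponential tails on the unit increments and on the mixing coefficients via the break-point/regeneration structure, then block and apply concentration—is close in spirit to what the paper does, but there is a genuine gap at the concentration step, and it is exactly the step where the paper's argument diverges from yours.

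You claim that each fixed-length block sum $\sum_{k\in\text{block}}X_k$ deviates from its mean by more than $x$ with probability at most $\exp\bigl(-c\min(x^2/L,\,x^{a_1})\bigr)$, ``by the sub-Weibull concentration coming from (a)''. But (a) is a tail bound on a \emph{single} increment; a Bernstein-type bound of this form for the block sum requires the summands to be independent (or a substitute for independence), and the $X_k$ inside a block are not. Your buffers and mixing bound (b) make the \emph{block sums} approximately independent of one another, not the summands within a block. In particular, the tail input you have for the block sum, namely Lemma~\ref{largerdeviationincrement} (applied to the block length $L$), only says the block sum is $O(L)$ with stretched-exponential tails beyond that scale; it gives no concentration around the mean $\alpha L$ at scale $o(L)$, which is precisely what you need. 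As written, the argument is circular: the per-block concentration is essentially the theorem at scale $L$.

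The paper handles this differently. Rather than fixed-length blocks of unit increments, it builds stopping-time epochs $M_i$ of length roughly $t^{\gamma_1}$ (with $\gamma_1=(\gamma+\tfrac13)/2$) by iterating the auxiliary process $\eta^{M_{i-1}+T_{i,j-1}}$ until one survives for time $t^{\gamma_1}$. The strong Markov property at the stopping times $M_i+t^{\gamma_1}$ then makes the epoch increments $\mathcal{R}(\eta^{M_{i-1}}_{M_i-M_{i-1}})\mathbbm{1}\{E_{it}\}$ genuinely i.i.d., and on a high-probability event $E_t$ (built from Theorem~\ref{mainthmsurvivaltime} and Lemma~\ref{largerdeviationincrement}) they are uniformly bounded by $12(\lambda_c+\varepsilon)t^{\gamma_1}$. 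Hoeffding's inequality applies \emph{directly}. The mean $h(t)$ of the truncated epoch sum is then identified as $\alpha t+O(t^{1-\gamma_1+\gamma_2})$ by comparing back to $\mathbb{E}[\mathcal{R}(\tilde\eta_t)]=\alpha t$ (Lemma~\ref{rightmeanlemma}). The Hoeffding exponent becomes $t^{1-3\gamma_1+4\gamma_2}$, and the constraint $\gamma<\tfrac13$ arises from requiring both $1-3\gamma_1+4\gamma_2>0$ and the correction exponent $\gamma_2=\tfrac{1}{12}-\tfrac{\gamma}{4}>0$. Your account of why $\tfrac13$ appears (``scale-matching among $L$, $g$, the per-block allowance $Ln^{-\gamma}$, and the box-crossing spatial scale'') is too vague to reproduce this; if your block-sum concentration were valid as stated, you would in fact get $\gamma<\tfrac12$, which is a red flag.

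To repair your approach you would either need a Bernstein-type deviation inequality for $\alpha$-mixing sequences with sub-Weibull marginals (such inequalities exist but are nontrivial and not invoked here), or you would have to use the break points themselves as the block boundaries so that the block increments become genuinely i.i.d.\ by the strong Markov property—at which point you have essentially reconstructed the paper's epoch construction.
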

The critical contact process is known to die out as shown in \cite{bezuidenhout1990critical}. Moreover in one dimension the survival function of the critical contact process, $s(t)$, has been bounded below by a polynomial in \cite{durrett1988lecture} and above by a polynomial in \cite{duminil2018box}. Both bounds assume the contact process has a single infection at the origin. Our results now show a sharp transition when adding the boost of $\varepsilon$ to the left and right edges $\lambda_e$. By Theorem \ref{mainthmsurvivaltime} we see the survival function of the process with the origin initially infected now decays at no slower than a stretched exponential rate on the event of extinction. For the supercritical contact process it is shown in \cite{durrett1983supercritical} that the survival function of the process with a single infection at the origin decays at an exponential rate. Determining if the survival function of the edge modified contact process has heavier than exponential tails on the event of extinction when $\lambda_i = \lambda_c$ and $\lambda_e > \lambda_c$ remains an open question. We conjecture Theorem \ref{mainthmrttail} can be used to show a law of large numbers for the outbreak size $|\xi_t|$, since the theorem gives stretched exponential bounds for deviations of $\mathcal{R}(\xi_t)$ being of order $t^{1-\gamma}$ for $\gamma \in [0, \frac{1}{3})$. On the other hand, we expect the outbreak size $|\xi_t|$ to scale of order no less that $t s(t) \gtrsim t^{\frac{4}{5}}$ based on Theorem 1.1 of \cite{duminil2018box}. If we set $\gamma = 0.3$ in Theorem \ref{mainthmrttail}, then we have stretched exponential bounds for fluctuations in the right edge of a lower order of magnitude than the conjectured mean outbreak size $|\xi_t|$.

\section{Notation and a Note on Constants}

We will denote $\mathbb{N}$ as the natural numbers $\{0,1,2,\ldots \} $ and $\mathbb{Z}_{0+}$ as the non-negative integers. All numbered constants such as $c_1$, $c_2$, defined throughout the paper will typically be referred to in later parts and be held constant line by line. On the other hand the constants $c$ and $c'$ are assumed to be interchanging line by line. If specified constants may be independent of other parameters varied throughout the arguments of this paper, such as the time $t$. We use standard asymptotic notation,
\begin{equation}
    \begin{aligned}
        f &= \mathcal{O}(g) \text{ if } \limsup \frac{f}{g} < \infty \\
        f &= o(g) \text{ if } \lim \frac{f}{g} = 0
    \end{aligned}
\end{equation}
We will commonly consider the right edge modified contact process $\eta_t$ supported on the set $\Sigma^{\ominus}$; this probability model is equivalent to the boundary modified contact process $\xi_t$ supported on $\Sigma^{\ominus}$. For any time $t \in \mathbb{R}$ we define the time $t^{-}$ as,
\begin{equation}
    \xi_{t^-}(x) = \lim_{s \rightarrow 0^-} \xi_{t+s}(x).
\end{equation}
A site $x$ being infected at time $t^-$ also corresponds to the event $\xi_{t^-}(x) = 1$.

\section{Sitewise Construction \& Attractiveness at the Right Edge}
We will now generate a graphical construction for the boundary modified contact process $\xi_t$ and right edge modified contact process $\eta_t$. Let $\tilde{E}$ be the directed edge set of $\mathbb{Z}$. For each directed edge $(x,y) \in \tilde{E}$, let $N_{x,y}(t)$ be a rate $\lambda_c$ Poisson process. For each site $x \in \mathbb{Z}$, let $N_x(t)$ be a rate $1$ Poisson process. Let $N_{e1}(t)$ and $N_{e2}(t)$ each be rate $\varepsilon$ Poisson processes. Suppose that each of previous counting processes are independent. We evolve $\xi_t$ and $\eta_t$ by the following rules,
\begin{enumerate}
    \item If a site $x \in \mathbb{Z}$ is infected at time $t^-$  for either $\xi_{t^-}$ or $\eta_{t^-}$, and the recovery clock $N_x(t)$ rings at time $t$, then both $\xi_t(x)$ and $\eta_t(x)$ will jump to being susceptible at time $t$. 
    \item If at time $t^-$ a site $x \in \mathbb{Z}$ is infected and $x+1$ is susceptible for either $\xi_{t^-}$ or $\eta_{t^-}$, and the infection clock $N_{x,x+1}(t)$ rings at time $t$, then both $\xi_t(x+1)$ and $\eta_t(x+1)$ will jump to being infected at time $t$.
    \item If at time $t^-$ a site $x \in \mathbb{Z}$ is infected and $x-1$ is susceptible for either $\xi_{t^-}$ or $\eta_{t^-}$, and the infection clock $N_{x,x-1}(t)$ rings at time $t$, then both $\xi_t(x-1)$ and $\eta_t(x-1)$ will jump to being infected at time $t$.
    \item Whenever the clock $N_{e1}(t)$ rings, $\xi_t$ will send an infection from the random location $\mathcal{L}(\xi_t)$ to location $\mathcal{L}(\xi_t) - 1$, resulting in $\xi_t\left( \mathcal{L}(\xi_t) -1  \right)$ becoming infected. There is no change if $\xi_t = \emptyset$.
    \item Whenever the clock $N_{e2}(t)$ rings, both $\xi_t$ and $\eta_t$ will send an infection from the random locations $\mathcal{R}(\xi_t)$, and $\mathcal{R}(\eta_t)$ to locations $\mathcal{R}(\xi_t) + 1$, and $\mathcal{R}(\eta_t) + 1$, respectively. There is no change if $\xi_t = \emptyset$ or $\eta_t = \emptyset$, respectively.
\end{enumerate}
When the left or right edge is equal to $\pm \infty$, then we will not attribute any edge boosts from the clocks $N_{e1}(t)$ or $N_{e2}(t)$, respectively. We will define our outcome space,
\begin{equation}
    \Omega = \Omega_{e1} \times \Omega_{e2} \times \left( \prod_{x \in \mathbb{Z}}  \Omega_x \times \Omega_{x,x+1} \times \Omega_{x,x-1} \right),
\end{equation}
where $\Omega_{e1}, \  \Omega_{e2}, \ \Omega_{x},  \ \Omega_{x,x+1}, \Omega_{x,x-1}$ are each the set of c\'adlag functions on $\mathbb{Z}_{0+}$ for $x \in \mathbb{Z}$. We define our sigma-field $\mathcal{F}$ to be the minimal sigma field generated by $\{N_{e1}(t)\}_{t \geq 0}$, $\{N_{e2}(t)\}_{t \geq 0}$, $\{N_{x}(t)\}_{t \geq 0}$, $\{N_{x,x+1}(t)\}_{t \geq 0}$, and $\{N_{x,x-1}(t)\}_{t \geq 0}$ for each $x \in \mathbb{Z}$. We let our probability measure $\mathbb{P}$ be the product measure between the laws of $\{N_{e1}(t)\}_{t \geq 0}$, $\{N_{e2}(t)\}_{t \geq 0}$, $\{N_{x}(t)\}_{t \geq 0}$, $\{N_{x,x+1}(t)\}_{t \geq 0}$, and $\{N_{x,x-1}(t)\}_{t \geq 0}$ for each $x \in \mathbb{Z}$. We will let $ \{ \mathcal{F}_t \}_{t \geq 0}$ be the natural filtration of the sitewise construction of the boundary modified contact process up until time $t$.

We say that a \textbf{$\lambda_i$-open path} exists between sites $x$ and $y$ between times $0$ and $t$ if an open path exists under the standard graphical construction of the critical contact process (infection paths omitting the edge-boost clocks $N_{e1}(t)$ and $N_{e2}(t)$). Namely there exists a $\lambda_i$ open path between $x$ and $y$ between times $0$ and $t$, if there exists a sequence,
\begin{equation}
    \begin{aligned}
        (x,0), (x_1, t_1), \ldots, (x_{n-1}, t_{n-1}), (y, t_n)
    \end{aligned}
\end{equation}
So that for $0 \leq i \leq n-1$ we have that $\xi_s(x_i) = 1$ in $[t_i, t_{i+1})$, that $|x_i - x_{i+1}| = 1$, and at time $t_{i+1}$ we observe $N_{x_i, x_{i+1}}(s)$ increment. Here we assume that $x_0 = x$, $t_0 = 0$, $x_n = y$, and $\xi_0(x) = 1$. We say that a \textbf{$\lambda_e$-open path exists for $\xi_t$} between sites $x$ and $y$, times $0$ and $t$ and initial infected set $A$ if there exists an open path under the graphical construction above when additionally including infections from the edge boosts $N_{e1}(t)$ and $N_{e2}(t)$ and setting $\xi_0 = A$. We say that a \textbf{$\lambda_e$-open path exists for $\eta_t$} between sites $x$ and $y$, times $0$ and $t$ and initial infected set $A$ if there exists an open path under the graphical construction above when only using the right edge boost $N_{e2}(t)$ and setting $\eta_0 = A$.

We will now give an attractiveness property first used by Terra in \cite{terra2024dynamic} shared by both $\xi_t$ and $\eta_t$ about the right edge. We define the auxiliary process,
\begin{definition}
    \label{rightedgeauxillary}
    Let $\{ \xi_t \}_{t \geq 0}$ be either a copy of the boundary modified or right edge modified contact process. We define the process $\{ \eta^t_s \}_{s \geq 0 } $ as a copy of the right edge modified contact process so that $\eta^t_0 = \{0 \}$. For any site $x \in \mathbb{Z}$ and $s \geq 0$, $\eta^t_s$ will use infection clocks from $N_{x + \mathcal{R}(\xi_t), x + \mathcal{R}(\xi_t)-1}(t + s)$ and $N_{x + \mathcal{R}(\xi_t), x + \mathcal{R}(\xi_t)+1}(t + s)$ for infections occurring to the left and right respectively. Site $x$ will also recover at time $s$ if the clock $N_{x + \mathcal{R}(\xi_t)}(t+s)$ increments. $\eta^t_s$ will have a right edge boost at time $s$ whenever $N_{e2}(t+s)$ increments. Our notation will set $\mathcal{R}(\xi_t) = 0$ whenever $\xi_t = \emptyset$.
\end{definition}

We see via this construction that $\eta^t_s$ will have the law of the right edge modified contact process with the origin initially infected. If $\tau^{\emptyset}$ is the first hitting time of $\eta^t_s$ to the all susceptible state, then for all $s \in [0, \tau^\emptyset)$,
\begin{equation}
    \mathcal{R}(\xi_{t+s}) = \mathcal{R}(\xi_t) + \mathcal{R}(\eta^t_s).
\end{equation}
Moreover, we also have that for all $s \in [0, \tau^\emptyset)$ that the region,
\begin{equation}
    \left[ \mathcal{R}(\xi_t) - \mathcal{L}(\eta^t_s), \mathcal{R}(\xi_t) + \mathcal{R}(\eta^t_s) \right],
\end{equation}
is coupled between $\xi_{t+s}$ and $\eta^t_s$. We will refer to this coupled region as the \textbf{attractiveness property at the right edge} for $\xi_t$ and $\eta_t$. We will commonly use this property throughout this paper to couple the shifted process $\Psi \xi_{t+s}$ with $\Psi \xi'_s$, where $\xi'_s$ is a copy of the boundary modified contact process beginning with initial configuration $(-\infty, 0]$, or sampled from the invariant measure $\tilde{\mu}$.

\section{Bounding Survival Function of $\eta(t)$ on Extinction}

 We will now bound the survival function of $\eta'(t)$, the right edge modified contact process with the origin initially infected. We will then use this bound to prove Theorems \ref{mainthmsurvivaltime} and \ref{mainthmsurvivalsize}. We will now show,
\begin{lemma}
    \label{survivefunctnbound}    
    Let $\eta'_t$ be the right edge modified contact process with infection rates $\lambda_i = \lambda_c$, $\lambda_e = \lambda_c + \varepsilon$ for $\varepsilon > 0$ and the origin initially infected. Let $\tau^\emptyset$ be the first hitting time of $\eta'_t$ to the all susceptible configuration. Then there exists universal constants $c_9, c_{10} > 0$ so that,
    \begin{equation}
        \mathbb{P}\left( t < \tau^{\emptyset} < \infty \right) \leq c_9 \exp\left( - c_{10} t^{\frac{\delta}{4}} \right).
    \end{equation}
    Where $\delta \in (0,1)$ is defined as in Lemma \ref{boxcrossingdefinition}.
\end{lemma}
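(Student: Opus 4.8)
The plan is to show that on the event $\{t < \tau^\emptyset < \infty\}$ the process $\eta'_t$ must, roughly speaking, stay confined to a spatial window of polynomial width $t^{1-\delta}$ over a macroscopic time interval without ever triggering one of the ``survival-generating'' events guaranteed by Lemma \ref{halflinesurvival} and the box crossing property (Lemma \ref{boxcrossingdefinition}). The key observation is that $\eta'_t$ agrees with the critical contact process except through the single extra edge-boost clock $N_{e2}$ at the right edge, and that by Lemma \ref{halflinesurvival} there is a fixed probability $q>0$ that, starting from a single infection, the process both survives and has $\liminf \mathcal{R}/s>0$. The strategy is a renewal/block argument: partition $[0,\infty)$ into time blocks and show that within each block, conditioned on not yet dying and on the configuration at the start of the block, there is a probability bounded below by some fixed $p_0>0$ of initiating a configuration from which the survival event of Lemma \ref{halflinesurvival} succeeds. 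Since the survival event precludes $\tau^\emptyset<\infty$, each block that the process survives ``consumes'' an independent chance $p_0$ of forcing $\tau^\emptyset=\infty$, giving a geometric decay in the number of blocks.

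The technical heart is choosing the block length correctly so the argument closes with the stretched exponent $\delta/4$. Here I would use Lemma \ref{boxcrossrt}: on $\{t<\tau^\emptyset<\infty\}$ we know the right edge never escapes past order $t^{1-\delta}$ (more precisely, if it did escape it would have done so via a box-crossing of a $w(n)\times n$ box which, by the box crossing property, would have generated a surviving cluster with positive probability). So first I would restrict to the event that $\sup_{s\le t}|\mathcal{R}(\eta'_s)| \le y\, t^{1-\delta}$ for a suitable $y$; the complementary event already has probability $\le c_1 e^{-c_2 y}$ by Lemma \ref{boxcrossrt}. On this confinement event, I divide the box $[0,t]$ (in time) $\times$ $[-yt^{1-\delta}, yt^{1-\delta}]$ (in space) into roughly $t/(yt^{1-\delta})^{1/(1-\delta)} \asymp t^{?}$ disjoint sub-boxes each of the scale $w(n)\times n$ with $n \asymp (t^{1-\delta})^{1/(1-\delta)}=t$; one must juggle exponents so that the number of independent blocks is of order $t^{\delta}$ up to the loss incurred by the $w(n)\le cn^{1-\delta}$ bound, and then the exponent $\delta/4$ emerges after accounting for (i) the loss from the box crossing scale $w(n)$ versus $n$, and (ii) the need to have the spatial window not too wide so that many temporally-separated blocks fit. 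In each such block, the box crossing property gives probability at least $p>0$ of a horizontal+vertical crossing, and a crossing plus Lemma \ref{halflinesurvival} (applied to the infection seeded by the crossing) yields a positive probability of survival; these events in disjoint regions of the graphical construction are independent, so the probability that none of the $\asymp t^{\delta/4}$ blocks triggers survival, yet the process is still alive at time $t$, is at most $(1-p')^{t^{\delta/4}} \le c_9 e^{-c_{10} t^{\delta/4}}$.

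There are two points that need care. The first is the coupling between a box crossing of the \emph{critical contact process} and survival of $\eta'_t$: the edge-boost clock $N_{e2}$ only helps survival (it is an extra infection), so a $\lambda_i$-open box crossing for the critical process is also present for $\eta'_t$, and one can then graft on the survival event of Lemma \ref{halflinesurvival} using a fresh region of the graphical construction — this is where I would invoke the attractiveness property at the right edge from Section 4 to make the grafting rigorous and to ensure the ``survival from a single site'' event of Lemma \ref{halflinesurvival} can be planted at the endpoint of the crossing. The second, and I expect the main obstacle, is the bookkeeping of \emph{independence and spatial disjointness} of the blocks: because $\eta'_t$ can wander within the window $[-yt^{1-\delta},yt^{1-\delta}]$, the locations where one looks for box crossings are random, so one cannot simply fix $\asymp t^{\delta}$ deterministic disjoint boxes in advance. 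The fix is to work sequentially in time: at the end of block $k$, if the process is still alive its right edge is at some (random, $\mathcal{F}_{t_k}$-measurable) location $\mathcal{R}_k$, and one starts block $k+1$ in a fresh spatial-temporal region anchored at $(\mathcal{R}_k, t_k)$ using the auxiliary process $\eta^{t_k}_s$ of Definition \ref{rightedgeauxillary}; the relevant Poisson clocks in $[t_k,t_{k+1})$ near $\mathcal{R}_k$ are independent of $\mathcal{F}_{t_k}$, so the conditional survival probability in each block is bounded below by the same $p'>0$, and the strong Markov property then delivers the geometric bound. Finalizing the exact value of the exponent $\delta/4$ (rather than, say, $\delta$ or $\delta/2$) will come down to the number of blocks we can fit: with block time-length $\asymp t^{1-\delta/2}$ or similar and the $w(n)\le cn^{1-\delta}$ constraint forcing the spatial cost, $t^{\delta/4}$ is the surviving power, and I would not optimize further since the subsequent theorems only need \emph{some} positive stretched-exponential rate.
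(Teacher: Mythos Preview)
Your proposal has a genuine gap at its core. You want each time-block to ``consume an independent chance $p_0$ of forcing $\tau^\emptyset=\infty$'' by invoking the survival event of Lemma~\ref{halflinesurvival}, but that event depends on the \emph{entire} future of the graphical construction and is therefore not block-local. Concretely: if $S_k=\{\eta^{t_k}\text{ survives}\}$, then $\{\tau^\emptyset<\infty\}\subseteq\bigcap_k S_k^c$ and $\mathbb{P}(S_k^c\mid\mathcal{F}_{t_k})=1-q''$, but the events $S_1^c,\ldots,S_K^c$ all share the clocks after time $t_K$, so one cannot conclude $\mathbb{P}(\bigcap_{k\le K}S_k^c)\le(1-q'')^K$. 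The strong Markov property gives you the conditional marginals, not the joint bound you need. (Indeed, the renewal decomposition you sketch at the end is exactly the one the paper uses in Section~5.3 to deduce Theorems~\ref{mainthmsurvivaltime} and~\ref{mainthmsurvivalsize} \emph{from} Lemma~\ref{survivefunctnbound}; for the single-site process $\eta'$ itself the first renewal time is $\tau^\emptyset$, so the construction is circular.)

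The paper closes this gap by a two-step, multi-scale argument that replaces the infinite-time survival event with a chain of finite-time events. First (Lemma~\ref{survivalsomespeed}), genuinely block-local events $B_n=\{\mathcal{R}(\eta^{t_n}_{t'})\ge 2yt^{1-\delta/2}\}$ on disjoint time intervals show that on $\{t<\tau^\emptyset\}$ the right edge has, with stretched-exponentially high probability, reached spatial scale $k\asymp t^{1-\delta/2}$. Second (Lemma~\ref{dkrare}), once the right edge has reached scale $k$ quickly, a dyadic bootstrap shows $\tau^\emptyset=\infty$ with probability $\ge 1-c\exp(-c'k^{\delta/2})$: at each scale $k2^j$, a vertical $\lambda_i$-crossing $G_{k2^j}$ of a $[0,k2^j]\times[0,(k2^j)^{(2-\delta)/(2(1-\delta))}]$ box (which the $\lambda_e$-path to level $k2^j$ must intersect) sustains the process long enough for a finite-time growth event $F_{k2^j}$ to push the right edge to $k2^{j+1}$, and one inducts. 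The events $F_{k2^j,n}$ and $G_{k2^j}$ are finite-time and live in disjoint regions, so their failure probabilities sum; this is what makes the argument close. Your box-crossing instinct is right, but the crossings are used to extend persistence across increasing time scales, not to seed a one-shot survival. A secondary issue: your confinement step goes the wrong direction---Lemma~\ref{boxcrossrt} only controls the lower tail of $\mathcal{R}(\eta'_s)$ (via domination of the critical right edge), and in fact the paper wants the right edge to \emph{escape} confinement, not stay inside it.
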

\subsection{Bounding forward increments of $\mathcal{R}(\eta'_t)$ on event of survival by time $t$}
Our first step in showing Lemma \ref{survivefunctnbound} is to show that survival by time $t > 0$ implies that $\mathcal{R}(\eta'_s)$ has exceeded $t^{1-\frac{\delta}{2}}$ with a high probability for some $0 \leq s \leq t$. To do this we will first define the constant $\beta > 0$ so that,
\begin{equation}
    \label{definebeta}
    \mathbb{P}\left( \eta'_t \text{ survives and } \liminf_{t \rightarrow \infty} \frac{\mathcal{R}(\eta'_t)}{t} > \beta \right) = q > 0.
\end{equation}
The existence of $\beta > 0$ is given by Lemma  \ref{halflinesurvival}. We will now show the following.
\begin{lemma}
    \label{survivalsomespeed}
    Let $\eta'_t$ denote the right edge modified contact process with the origin initially infected and infection rates $\lambda_i = \lambda_c$, and $\lambda_e = \lambda_c + \varepsilon$. Let $A_t$ be the event of survival by time $t$, and fix $y > 0$. Then there exists universal constants $c_3, c_4 > 0$ independent of $t$ so that,
    \begin{equation}
        \label{boxcrossworstcase}
        \mathbb{P}\left( A_t \cap \left\{  \sup_{0 \leq s \leq t} \mathcal{R}(\eta'_s) \leq y t^{1-\frac{\delta}{2}} \right\}\right) \leq c_3 \exp \left( - c_4 t^{\frac{\delta}{2}} \right).
    \end{equation}
    \begin{proof}
        By the attractiveness property at the right edge we can couple $\mathcal{R}(\eta'_s)$ with $\mathcal{R}(\bar{\eta}_s)$ on $A_t$ for $0 \leq s \leq t$, where $\bar{\eta}_s$ is the right edge modified contact process with $(-\infty, 0]$ infected for $0 \leq s \leq t$. We also know that $\mathcal{R}(\bar{\eta}_s)$ will dominate $\mathcal{R}(\zeta_s)$ for all $s \geq 0$ where $\zeta_s$ is the critical contact process with $(-\infty, 0]$ initially infected. Thus since $\eta'(s)$ dominates the critical contact process and using Lemma \ref{boxcrossrt} we have,
        \begin{equation}
            \label{setupsomespeed}
            \mathbb{P}\left( A_t \cap \inf_{0 \leq s \leq t} \mathcal{R}(\eta's) \leq - y  t^{1-\frac{\delta}{2}}  \right) \leq c_1 \exp\left( -c_2 y t^{\frac{\delta}{2}} \right).
        \end{equation}
        By Lemma \ref{halflinesurvival} and (\ref{definebeta}) we also have that,
        \begin{equation}
        \label{speedpossible}
            \liminf_{t \rightarrow \infty} \ \mathbb{P}\left( \frac{\mathcal{R}(\eta'_{t})}{t} > \frac{\beta}{2} \right) = q' > 0.
        \end{equation}
For $n \in \mathbb{Z} \cap [ 0, \dfrac{\beta t^{\frac{\delta}{2}}}{4y} -1 ] $ we let $t_n = \dfrac{4yt^{1-\frac{\delta}{2}}}{\beta}$, $t' = \left\lfloor \dfrac{\beta t^{\frac{\delta}{2}}}{4y} -1 \right\rfloor $ and define the event,
\begin{equation}
    B_n = \left\{ \mathcal{R}(\eta^{t_n}_{t'}) \geq 2yt^{1-\frac{\delta}{2}} \right\}.
\end{equation}

By the Markov property we can conclude that the collection of events $\{ B_i \}_{i = 0}^{t'} $ are independent. For $0 \leq n \leq t'$ we also have by (\ref{speedpossible}) that,
\begin{equation}
    \mathbb{P}\left( B_n \right) \geq q'.
\end{equation}
And thus,
\begin{equation}
    \label{somebi}
    \mathbb{P}\left( \cap_{n=0}^{t'} B_n^c \right) \leq \left( 1 - q' \right)^{ \left\lfloor \dfrac{\beta t^{\frac{\delta}{2}}}{4y} -1 \right\rfloor }.
\end{equation}
We now intersect on the event,
\begin{equation}
    A_t \cap \{ \inf_{0 \leq s \leq t} \mathcal{R}(\eta'(s) \geq -yt^{1-\frac{\delta}{2}} \} \cap \{ \cup_{n=0}^{t'} B_n \},
\end{equation}
and assume the event $B_n$ occurred for $0 \leq n \leq t'$. Using the attractiveness property at the right edge we can write setting $t'' = \frac{4y (n+1)t^{1-\frac{\delta}{2}}}{\beta}$,
\begin{equation}    
    \label{somespeedattractiveness}
    \mathcal{R}(\eta'_{t''}) = \mathcal{R}(\eta^{t_n}_{t'' - t_n}) + \mathcal{R}(\eta'_{t_n}) \geq y t^{1-\frac{\delta}{2}}.
\end{equation}
Thus by (\ref{setupsomespeed}), (\ref{somebi}), and (\ref{somespeedattractiveness}), we have completed the lemma.
    \end{proof}
\end{lemma}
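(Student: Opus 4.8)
The plan is to cover the bad event $A_t \cap \{\sup_{0\le s\le t}\mathcal{R}(\eta'_s)\le yt^{1-\delta/2}\}$ by the union of two events, each of stretched-exponentially small probability. The first is that the right edge wanders below $-yt^{1-\delta/2}$ at some time $s\le t$. On its complement the second event is that, although $\eta'$ survives up to time $t$, in \emph{every} one of the $\approx\frac{\beta}{4y}t^{\delta/2}$ consecutive time windows of length $\ell:=\frac{4y}{\beta}t^{1-\delta/2}$ into which we cut $[0,t]$, the fresh right-edge excursion launched at the start of that window fails to advance by $2yt^{1-\delta/2}$. If neither event occurs (and we are on $A_t$), then some window sees an advance of $2yt^{1-\delta/2}$ while the right edge at the start of that window is $\ge -yt^{1-\delta/2}$, so by the attractiveness property at the right edge $\mathcal{R}(\eta'_\cdot)$ reaches $yt^{1-\delta/2}$ at the end of that window, contradicting the bad event.

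For the left-wandering event I would invoke the attractiveness property at the right edge for the process $\bar\eta$ equal to the right edge modified contact process started from $(-\infty,0]$: since $\mathcal{R}(\bar\eta_0)=0$, the auxiliary process of $\bar\eta$ at time $0$ (Definition \ref{rightedgeauxillary}) is exactly $\eta'$, so $\mathcal{R}(\bar\eta_s)=\mathcal{R}(\eta'_s)$ for every $s$ before $\eta'$ dies, hence for all $s\le t$ on $A_t$. Coupling $\bar\eta$ with the critical contact process $\zeta$ started from $(-\infty,0]$ through the common graphical construction makes $\bar\eta_s\supseteq\zeta_s$ for all $s$ (the right-edge boost only adds infections), so $\mathcal{R}(\eta'_s)\ge\mathcal{R}(\zeta_s)$ on $A_t$. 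Writing $yt^{1-\delta/2}=(yt^{\delta/2})\,t^{1-\delta}$ and applying Lemma \ref{boxcrossrt} with parameter $yt^{\delta/2}$ yields $\mathbb{P}(A_t\cap\{\inf_{0\le s\le t}\mathcal{R}(\eta'_s)\le -yt^{1-\delta/2}\})\le c_1\exp(-c_2 yt^{\delta/2})$.

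For the ``all windows fail'' event, I first pass from (\ref{definebeta}) to a uniform-in-$t$ lower bound: on the probability-$q$ event in Lemma \ref{halflinesurvival} the ratio $\mathcal{R}(\eta'_t)/t$ eventually exceeds $\beta/2$, so by bounded convergence $\liminf_t\mathbb{P}(\mathcal{R}(\eta'_t)/t>\beta/2)\ge q$; fix $q'\in(0,q)$ below this liminf. With window endpoints $t_n=n\ell$ let $\eta^{t_n}$ be the auxiliary right edge modified contact process of $\eta'$ at time $t_n$ run for time $\ell$, and set $B_n=\{\mathcal{R}(\eta^{t_n}_{\ell})\ge 2yt^{1-\delta/2}\}$ for $0\le n\le t'$ with $t'=\lfloor\frac{\beta}{4y}t^{\delta/2}-1\rfloor$ (so $t_{n+1}\le t$). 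Since $\eta^{t_n}$ has the law of $\eta'$ and $2yt^{1-\delta/2}=\tfrac{\beta}{2}\ell$, we get $\mathbb{P}(B_n)=\mathbb{P}(\mathcal{R}(\eta'_\ell)/\ell\ge\beta/2)\ge q'$ once $t$ (hence $\ell$) is large; and, conditionally on $\mathcal{F}_{t_n}$, the event $B_n$ is a function of the Poisson clock increments on $(t_n,t_{n+1}]$ spatially shifted by the now-known value $\mathcal{R}(\eta'_{t_n})$, whose probability is, by translation invariance, the deterministic constant $\mathbb{P}(B_n)$ — so the $B_n$ are independent and $\mathbb{P}(\bigcap_{n=0}^{t'}B_n^c)\le(1-q')^{t'+1}\le c\exp(-c't^{\delta/2})$. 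On $A_t\cap\{\inf_{0\le s\le t}\mathcal{R}(\eta'_s)\ge -yt^{1-\delta/2}\}\cap B_n$ the attractiveness decomposition $\mathcal{R}(\eta'_{t_{n+1}})=\mathcal{R}(\eta'_{t_n})+\mathcal{R}(\eta^{t_n}_{\ell})\ge yt^{1-\delta/2}$ is valid (on $B_n$ the auxiliary process is alive at time $\ell$) and $t_{n+1}\le t$, so $\sup_{0\le s\le t}\mathcal{R}(\eta'_s)\ge yt^{1-\delta/2}$. A two-term union bound with $c_4=\min(c_2 y,c')$ finishes the proof; the finitely many small $t$ are absorbed by enlarging $c_3$.

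The step I expect to be most delicate is the independence of the $B_n$: although $B_n$ is built from the running right edge $\mathcal{R}(\eta'_{t_n})$ — which depends on the past, in particular on the clocks that determine $B_{n-1}$ — its conditional law given $\mathcal{F}_{t_n}$ does not depend on the value of $\mathcal{R}(\eta'_{t_n})$ by spatial translation invariance of the i.i.d.\ Poisson clocks, and this is precisely what decouples it from $B_0,\dots,B_{n-1}$. The companion subtlety is that positivity of $\mathcal{R}(\eta^{t_n}_\ell)$ on $B_n$ certifies the auxiliary process has not died, which is exactly what licenses the attractiveness decomposition at the window's end. The remaining ingredients are the union bound and the rescalings of Lemmas \ref{halflinesurvival} and \ref{boxcrossrt}.
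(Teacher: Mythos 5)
Your proposal is correct and follows essentially the same route as the paper: bound the left excursion via the coupling with the critical contact process and Lemma \ref{boxcrossrt} applied at scale $yt^{\delta/2}$, then cut $[0,t]$ into $\mathcal{O}(t^{\delta/2})$ windows of length $\frac{4y}{\beta}t^{1-\delta/2}$, use Lemma \ref{halflinesurvival} to give each window's auxiliary right-edge process a uniform chance $q'$ of advancing $\frac{\beta}{2}\ell$, and conclude by independence plus the attractiveness decomposition. Your version is, if anything, slightly more careful than the paper's write-up (which has notational slips in the definitions of $t_n$ and $B_n$), and your explicit justification of the independence of the $B_n$ and of why $B_n$ certifies the auxiliary process is alive are exactly the points the paper leaves implicit.
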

\subsection{Forming a High Probability Survival Event}
We will start this section by showing a lemma related to the box crossing property for critical oriented percolation in two dimensions.
\begin{lemma}
    \label{boxcrossingproportion}
    There exists constants $c_5, c_6 > 0$ such that for all $n \geq 1$ the probability that a box of dimension $n \times n^{\frac{2-\delta}{2(1-\delta)}}$ is not crossed vertically under the critical contact process is less than or equal to,
    \begin{equation}
        c_5 \exp\left( - c_6 n^{\frac{\delta}{2}} \right).
    \end{equation}
    \begin{proof}
    We write our box $B = \left[ 0, n \right] \times \left[ 0, n^{\frac{2-\delta}{2(1-\delta)}} \right]$, and note that the function $w(n)$ in Lemma \ref{boxcrossingdefinition} satisfies $w(n) \leq cn^{1-\delta}$. For $1 \leq i \leq  \left\lfloor c^{-1} n^{\frac{\delta}{2}} \right\rfloor$ we form disjoint subsets,
    \begin{equation}
        B_i = \left[ (i-1) cn^{1-\frac{\delta}{2}}, i cn^{1-\frac{\delta}{2}} \right] \times \left[0, n^{\frac{2-\delta}{2(1-\delta)}} \right].
    \end{equation}
    Using Lemma \ref{boxcrossingdefinition}, there exists a constant $p >0$ independent of $n$ so that for $1 \leq i \leq  \left\lfloor c^{-1} n^{\frac{\delta}{2}} \right\rfloor$,
    \begin{equation}
        \mathbb{P}\left( B_i \text{ is crossed vertically} \right) \geq p.
    \end{equation}
    Using that our collection $\left\{ B_i \right\}_{i=1}^{\left\lceil c^{-1}n^{\frac{\delta}{2}} \right\rceil }$ is disjoint, we can write,
    \begin{equation}
    \begin{aligned}
        & \mathbb{P}\left( B \text{ is crossed vertically} \right) \\
        &\geq \mathbb{P}\left( \bigcup_{i=1}^{\left\lfloor c^{-1} n^{\frac{\delta}{2}} \right\rfloor} \left\{ B_i \text{ is crossed vertically} \right\} \right) \\
        &\geq 1- (1-p)^{\left\lfloor c^{-1} n^{\frac{\delta}{2}} \right\rfloor},
    \end{aligned}
    \end{equation}
    which completes the lemma.
    \end{proof}
\end{lemma}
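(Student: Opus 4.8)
The plan is to cover the tall box $B=[0,n]\times[0,m]$, where $m=n^{\frac{2-\delta}{2(1-\delta)}}$, by roughly $n^{\delta/2}$ pairwise disjoint thin vertical sub-strips, each of which is crossed vertically by the critical contact process with a probability bounded below by a universal constant, and then to use independence to conclude that the probability that \emph{none} of them is crossed — and hence that $B$ itself fails to be crossed vertically — is stretched-exponentially small in $n^{\delta/2}$. The arithmetic that makes the two scales match is the identity
\begin{equation}
    m^{1-\delta}=\left(n^{\frac{2-\delta}{2(1-\delta)}}\right)^{1-\delta}=n^{\frac{2-\delta}{2}}=n^{1-\frac{\delta}{2}},
\end{equation}
so that a box whose width is on the order of $w(m)$ — the scaling appearing in Lemma~\ref{boxcrossingdefinition}, which satisfies $w(m)\le cm^{1-\delta}=cn^{1-\delta/2}$ — has width of order $n^{1-\delta/2}$, and therefore $\lfloor c^{-1}n^{\delta/2}\rfloor$ disjoint translates of such a box fit inside $[0,n]$.

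Concretely, I would set, for $1\le i\le \lfloor c^{-1}n^{\delta/2}\rfloor$,
\begin{equation}
    B_i=\left[(i-1)cn^{1-\frac{\delta}{2}},\ icn^{1-\frac{\delta}{2}}\right]\times\left[0,m\right],
\end{equation}
and observe that each $B_i$ contains a spatial translate of a box of dimension $w(m)\times m$ (or, after a bounded adjustment for integer parts, $w(\lceil m\rceil)\times\lceil m\rceil$, whose vertical crossings truncate at time $m$ to vertical crossings of an $m$-tall box). By Lemma~\ref{boxcrossingdefinition} and the spatial translation invariance of the graphical construction, this sub-box is crossed vertically with probability at least $p>0$, and a vertical crossing of the sub-box is a vertical crossing of $B_i$, and in turn of $B$ since $B_i\subseteq B$ has the same time extent. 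The events $\{B_i\text{ crossed vertically}\}$ depend only on the recovery and infection clocks attached to the sites and edges lying in $B_i$, so taking the sub-strips pairwise disjoint — inserting a unit gap between consecutive strips if one wants to avoid even sharing a boundary site, at the cost of only a constant factor in their number — makes these events independent. Therefore
\begin{equation}
    \mathbb{P}\left(B\text{ not crossed vertically}\right)\le \prod_{i=1}^{\lfloor c^{-1}n^{\delta/2}\rfloor}\mathbb{P}\left(B_i\text{ not crossed vertically}\right)\le (1-p)^{\lfloor c^{-1}n^{\delta/2}\rfloor},
\end{equation}
and absorbing the floor into the prefactor yields $c_5\exp(-c_6 n^{\delta/2})$ for suitable universal $c_5,c_6>0$; for the finitely many small $n$ where no strip fits, the bound is trivial once $c_5\ge 1$.

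The main obstacle I expect is the combination of monotonicity and independence in the final display, rather than any genuinely hard probabilistic input. Monotonicity is needed to upgrade Lemma~\ref{boxcrossingdefinition}, stated for a box of the precise dimension $w(m)\times m$, to a lower bound of $p$ for the strictly wider strip $B_i$: this is fine because a vertical crossing of a sub-box survives in the enclosing strip, so widening can only help, but it should be stated carefully. Independence, not mere positive correlation, is what is actually used to pass from the intersection to the product — the FKG/Harris inequality for the contact process gives that the events $\{B_i\text{ crossed vertically}\}$ are positively correlated, which would bound $\mathbb{P}(\cap_i\{B_i\text{ not crossed}\})$ from below, the wrong direction — so the argument must be set up so that the $B_i$ depend on disjoint families of graphical-construction clocks. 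The remaining steps — the exponent computation, the integer-part bookkeeping, and converting $(1-p)^{\Theta(n^{\delta/2})}$ into a stretched exponential — are routine, and, as already noted in the text preceding the lemma, the box-crossing estimate of Lemma~\ref{boxcrossingdefinition} holds verbatim for the critical contact process, so no further transfer from oriented percolation is required here.
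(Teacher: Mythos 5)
Your proposal is correct and follows essentially the same route as the paper: partition $[0,n]\times[0,n^{\frac{2-\delta}{2(1-\delta)}}]$ into $\lfloor c^{-1}n^{\delta/2}\rfloor$ disjoint vertical strips of width $cn^{1-\delta/2}\geq w(m)$, apply Lemma \ref{boxcrossingdefinition} to each, and use independence of the strips to get the $(1-p)^{\lfloor c^{-1}n^{\delta/2}\rfloor}$ bound. Your explicit exponent check $m^{1-\delta}=n^{1-\delta/2}$ and the remark that independence (not FKG) is what the product step requires are both correct and, if anything, make the argument slightly more careful than the paper's.
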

For $k \in \mathbb{N}$ let $K$ be the first hitting time of $\mathcal{R}(\eta'_t)$ to $k$, and let, 
\begin{equation}
    D_k = \left\{ K \leq \left( \frac{k}{2} \right)^{\frac{2-\delta}{2(1-\delta)}} \right\}.
\end{equation}
Letting $\tau^\emptyset$ be the first hitting time of $\eta'_t$ to the all susceptible state, we will now show the following lemma,
\begin{lemma}
    \label{dkrare}
    Let $\eta'(t)$ be the right edge modified contact process with infection rates $\lambda_i = \lambda_c$, and $\lambda_e = \lambda_c + \varepsilon$ for $\varepsilon > 0$ and the origin initially infected. Then there exists universal constants $c_7, c_8 > 0$ independent of $k$ so that for the event $D_k$,
    \begin{equation}
        \mathbb{P}\left( \{\tau^\emptyset < \infty \} \cap D_k \right) \leq c_7 \exp\left(- c_8 k^{\frac{\delta}{2}} \right).
    \end{equation}
\begin{proof}
    We will define the event $G_k$ as the event that the box $\left[0, k \right] \times \left[0, k^{\frac{2-\delta}{2(1-\delta)}}\right]$ is crossed vertically under the critical contact process (ignoring the infection clock $N_{2e}(t)$ entirely). Using Lemma \ref{boxcrossingproportion} there exists constants $c_5$, $c_6 > 0$ so that for any $k \geq 0$,
    \begin{equation}
        \label{gkthat}
        \mathbb{P}\left( G_k \right) \geq 1 - c_5 \exp\left(-c_6 k^{\frac{\delta}{2}} \right).
    \end{equation}
    We will now define the event $F_k$ as follows;
    \begin{enumerate}
        \item For $n \in \left[ 0, \dfrac{\beta k^{\frac{\delta}{2(1-\delta)}}}{4} -1 \right] \cap \mathbb{Z}$ we let $t_{n,k} = \left( \frac{k}{2} \right)^{\frac{2-\delta}{2(1-\delta)}} + \frac{4nk}{\beta}$, and $b_k = \left\lfloor \frac{\beta k^{\frac{\delta}{2(1-\delta)}} }{4} -1\right\rfloor$
        \item We say the event $F_{k,n}$ occurs if $ \mathcal{R}(\eta^{t_{n,k}}_{\frac{4k}{\beta}}) \geq 2k$
        \item We then set $F_k = \cup_{n=0}^{b_k} F_{k,n}$
    \end{enumerate}
    We will now show that,
    \begin{equation}
    \label{impliessurvival}
        D_k  \cap \left( \cap_{j=0}^\infty \left( F_{k2^j} \cap G_{k2^j} \right) \right) \subseteq \{ \tau^\emptyset < \infty \}.
    \end{equation}       
    Figure 1 provides a visual for the description provided below. We first see that the event $D_k$ guarantees that $\mathcal{R}(\eta'_s)$ reaches $k$ at or prior to time $\left( \frac{k}{2} \right)^{\frac{2-\delta}{2(1-\delta)}}$. The event $G_k$ then gives that the box $B_k =  \left[ 0,k \right] \times \left[ 0, k^{\frac{2-\delta}{2(1-\delta)}} \right]$ is crossed vertically by the critical contact process. This means there exists a $\lambda_i$-open path from two sites $x, y \in [0,k]$ between times $0$ and $k^{\frac{2-\delta}{2(1-\delta)}}$ that stays within the box $B_k$. This $\lambda_i$-open path between $x$ and $y$ must then intersect the $\lambda_e$-open path between $0$ and $k$. This lets us then conclude that, 
    \begin{equation}
    \mathcal{R}(\eta'_t) \geq 0 \text{ for all } t \in \left[ \left( \frac{k}{2} \right)^{\frac{2-\delta}{2(1-\delta)}},  k ^{\frac{2-\delta}{2(1-\delta)}}\right].
    \end{equation}
    Let us now suppose that for $j \geq 0$ that,
    \begin{equation}
        \label{positiverighteedge}
        \mathcal{R}(\eta'_t) \geq 0, \text{ for all } t \in \left[  \left( k 2^{j-1} \right)^{\frac{2-\delta}{2(1-\delta)}},  \left( k 2^{j} \right)^{\frac{2-\delta}{2(1-\delta)}}\right].
    \end{equation}
    On the event that $F_{k2^j ,n}$ holds for some $n \in \mathbb{Z} \cap \left[0, \frac{\beta (k2^{j})^{\frac{\delta}{2(1-\delta)}}}{4}-1 \right]$ we can then apply the attractiveness property at the right edge and (\ref{positiverighteedge}) to conclude,
    \begin{equation}
        \mathcal{R}(\eta'_{t_{n+1,k2^j}}) =  \mathcal{R}(\eta'_{t_{n,k2^j}}) + \mathcal{R}(\eta^{t_{n,k2^j}}_{\frac{k}{\beta} 2^{j+2}}) \geq k 2^{j+1}.
    \end{equation}
    And thus $\mathcal{R}(\eta'_{t})$ reaches $k2^{j+1}$ at or prior to time $\left( k2^j \right)^{\frac{2-\delta}{2(1-\delta)}}$. Based on event $G_{k2^{j+1}}$ the $\lambda_e$ open path reaching $k2^{j+1}$ must intersect a $\lambda_i$-open path that vertically crosses the box $\left[0, k2^{j+1} \right] \times \left[0, \left( k{2^{j+1}} \right)^{\frac{2-\delta}{2(1-\delta)}}\right]$. This then implies that,
    \begin{equation}
        \mathcal{R}(\eta'_t) \geq 0, \text{ for all } t \in \left[  \left( k 2^{j} \right)^{\frac{2-\delta}{2(1-\delta)}},  \left( k 2^{j+1} \right)^{\frac{2-\delta}{2(1-\delta)}}\right].
    \end{equation}
    Using induction we can then deduce that $\limsup_t\mathcal{R}(\eta'_t) = \infty$, and thus $\tau^\emptyset = \infty$.
\begin{figure}[H]
    \centering
    \includegraphics[width=1\linewidth]{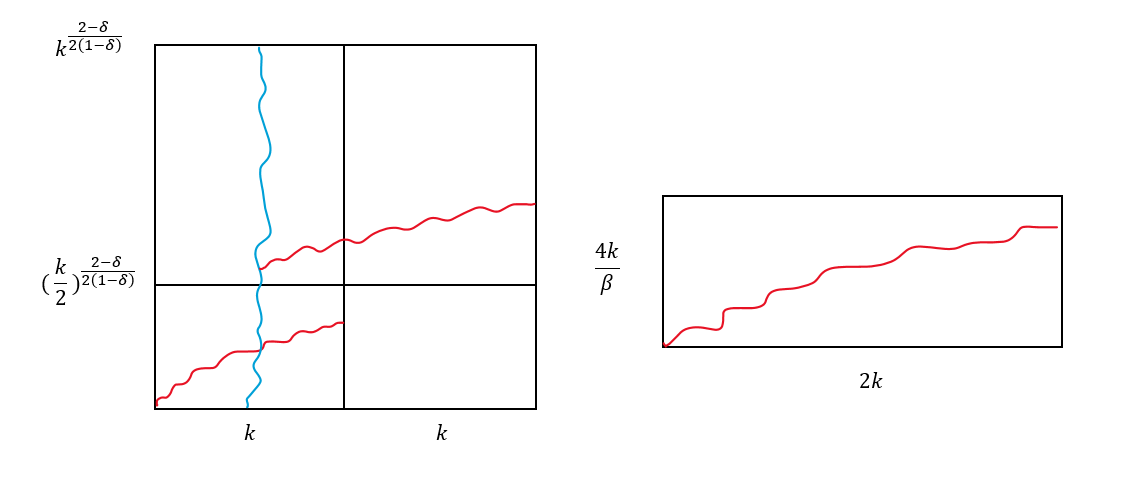}
    \caption{Figures of the event $F_{k}$ and $G_{k}$ occurring on the left, and the right edge having linear speed during an epoch $F_{ik}$ on the right. Paths open under the critical contact process are in blue, while $\lambda_e$-open paths are in red.}
    \label{fig:enter-label}
\end{figure}
Using (\ref{impliessurvival}) we can bound,
\begin{equation}
    \label{bounddksurvival}
    \begin{aligned}
        \mathbb{P}\left( D_k \cap \{ \tau^{\emptyset} < \infty \} \right) & \leq \mathbb{P}\left( D_k \cap \left( \cap_{j=0}^\infty \left( F_{k2^j} \cap G_{k2^j} \right) \right)^c \right) \\
        &\leq \sum_{j=0}^\infty \mathbb{P}\left( F_{k2^j}^c \right) + \mathbb{P}\left( G_{k2^j}^c \right).
    \end{aligned}
\end{equation}
We will now provide an upper bound to $\mathbb{P}(F_k^c)$ for $k \geq 1$. Using the Markov property we note that each $F_{k,n}$ are independent for $n \in \mathbb{Z} \cap  \left[ 0, \dfrac{\beta k^{\frac{\delta}{2(1-\delta)}}}{4} -1 \right] $. Using (\ref{speedpossible}) we have for $n \in \mathbb{Z} \cap \left[ 0, \dfrac{\beta k^{\frac{\delta}{2(1-\delta)}}}{4} -1 \right] $,
\begin{equation}
    \label{fnklikely}
    \mathbb{P}\left( F_{k,n} \right) \geq q' > 0 .
\end{equation}
We note that based on Definition \ref{rightedgeauxillary} whenever $\eta'_{t_{n,k}} = \emptyset$, the process $\eta^{t_{n,k}}_s$ takes instructions from the standard graphical construction starting at time $t_{n,k}$ with no location shift. And thus by independence and (\ref{fnklikely}),
\begin{equation}
    \label{fkthat}
    \mathbb{P}\left(F^c_{k} \right) \leq (1-q')^{\left\lfloor \dfrac{\beta k^{\frac{\delta}{2(1-\delta)}}}{4}  \right\rfloor} .
\end{equation}
And thus combining (\ref{gkthat}), (\ref{bounddksurvival}), and (\ref{fkthat}), there exists universal constants $c_7$ and $c_8 >0$ so that,
\begin{equation}
    \mathbb{P}\left( D_k \cap \{ \tau^\emptyset < \infty \} \right) \leq c_7 \exp\left(-c_8 k^{\frac{\delta}{2}} \right).
\end{equation}
This then completes the lemma.
\end{proof}
\end{lemma}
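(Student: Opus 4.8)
The plan is a multiscale renormalization argument. On the event $D_k$ the right edge reaches level $k$ by time $(k/2)^{\frac{2-\delta}{2(1-\delta)}}$, and I will show that from there $\mathcal{R}(\eta'_t)$ can, with overwhelming probability, be pushed through every dyadic level $k2^{j}$ quickly enough that it never returns below its current position, which forces $\tau^\emptyset=\infty$. For each scale $m=k2^{j}$ I pair two events. The first, $G_m$, is that the box $[0,m]\times[0,m^{\frac{2-\delta}{2(1-\delta)}}]$ is crossed vertically under the critical ($\lambda_i$-only) dynamics; by Lemma \ref{boxcrossingproportion} this holds with probability at least $1-c_5\exp(-c_6 m^{\delta/2})$. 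The second, $F_m=\bigcup_{n=0}^{b_m}F_{m,n}$, is built from the auxiliary right-edge processes $\eta^{t_{n,m}}_\cdot$ of Definition \ref{rightedgeauxillary} launched at the epoch times $t_{n,m}$, with $F_{m,n}=\{\mathcal{R}(\eta^{t_{n,m}}_{4m/\beta})\ge 2m\}$: a single successful epoch advances the true right edge by $2m$ within one time window of length $4m/\beta$, and the number of epochs $b_m\asymp m^{\delta/(2(1-\delta))}$ is chosen so that the total epoch time stays of order $m^{\frac{2-\delta}{2(1-\delta)}}$ (using $1+\tfrac{\delta}{2(1-\delta)}=\tfrac{2-\delta}{2(1-\delta)}$).

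Next I establish the deterministic containment
\[
D_k\cap\Big(\bigcap_{j\ge0}\big(F_{k2^{j}}\cap G_{k2^{j}}\big)\Big)\subseteq\{\tau^\emptyset=\infty\}
\]
by induction on $j$. For the base step, $D_k$ provides a $\lambda_e$-open path from the origin to level $k$ completed before time $(k/2)^{\frac{2-\delta}{2(1-\delta)}}$, and $G_k$ supplies a $\lambda_i$-open path crossing $B_k=[0,k]\times[0,k^{\frac{2-\delta}{2(1-\delta)}}]$ vertically; since one path runs from the left of the box up to level $k$ while the other separates the box from bottom to top, the space-time ordering of one-dimensional contact-process infection paths forces them to intersect, which pins $\mathcal{R}(\eta'_t)\ge0$ on $[(k/2)^{\frac{2-\delta}{2(1-\delta)}},k^{\frac{2-\delta}{2(1-\delta)}}]$. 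For the inductive step, assuming $\mathcal{R}(\eta'_t)\ge0$ on $[(k2^{j-1})^{\frac{2-\delta}{2(1-\delta)}},(k2^{j})^{\frac{2-\delta}{2(1-\delta)}}]$, a successful epoch $F_{k2^{j},n}$ together with the attractiveness property at the right edge and this lower bound gives $\mathcal{R}(\eta'_{t_{n+1,k2^{j}}})\ge k2^{j+1}$, so the edge reaches level $k2^{j+1}$ within the time window of the crossing supplied by $G_{k2^{j+1}}$; intersecting that $\lambda_e$-open path with the vertical crossing of the larger box pins $\mathcal{R}(\eta'_t)\ge0$ on $[(k2^{j})^{\frac{2-\delta}{2(1-\delta)}},(k2^{j+1})^{\frac{2-\delta}{2(1-\delta)}}]$. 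Chaining over $j$ gives $\limsup_t\mathcal{R}(\eta'_t)=\infty$, hence $\tau^\emptyset=\infty$.

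It then remains to bound the probability of the complement. By the containment,
\[
\mathbb{P}\big(D_k\cap\{\tau^\emptyset<\infty\}\big)\le\sum_{j=0}^\infty\Big(\mathbb{P}(F_{k2^{j}}^c)+\mathbb{P}(G_{k2^{j}}^c)\Big).
\]
The $G$-terms are controlled directly by Lemma \ref{boxcrossingproportion}. For $F_m^c$, the structural point is that the sub-events $F_{m,n}$, $0\le n\le b_m$, are independent: by Definition \ref{rightedgeauxillary} the process $\eta^{t_{n,m}}_\cdot$ reads the Poisson clocks only on the window $[t_{n,m},t_{n,m}+4m/\beta]$, these windows are disjoint across $n$, and when $\eta'_{t_{n,m}}=\emptyset$ the auxiliary process simply restarts from the origin with no location shift, so the Markov property applies cleanly. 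Each $F_{m,n}$ has probability at least $q'>0$ by (\ref{speedpossible}), whence $\mathbb{P}(F_m^c)\le(1-q')^{b_m}$ with $b_m\asymp m^{\delta/(2(1-\delta))}\ge m^{\delta/2}$. Substituting $m=k2^{j}$, each summand is at most a constant times $\exp(-c\,(k2^{j})^{\delta/2})=\exp(-c\,2^{j\delta/2}k^{\delta/2})$, so the series is dominated by its $j=0$ term up to a universal multiplicative constant, giving $\mathbb{P}(D_k\cap\{\tau^\emptyset<\infty\})\le c_7\exp(-c_8 k^{\delta/2})$.

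The main obstacle is the deterministic containment, and within it the exponent and constant bookkeeping: one must verify that the $b_{k2^{j}}+1$ linear-speed epochs at scale $k2^{j}$ (total duration bounded by $(1+2^{-\frac{2-\delta}{2(1-\delta)}})(k2^{j})^{\frac{2-\delta}{2(1-\delta)}}<(k2^{j+1})^{\frac{2-\delta}{2(1-\delta)}}$) finish before the vertical crossing of the box at scale $k2^{j+1}$ would time out, and that the $\lambda_e$-open path realizing $\mathcal{R}(\eta'_\cdot)$ is genuinely forced to meet the $\lambda_i$-crossing path of $G_{k2^{j+1}}$ — this uses that both paths lie inside the box, one separating it left-to-right and the other going bottom-to-top, so by planarity of space-time infection paths for a one-dimensional interacting particle system they must cross. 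Once these geometric facts are in place, the probabilistic estimates above are routine.
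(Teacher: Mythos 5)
Your proposal is correct and follows essentially the same route as the paper: the same events $G_m$ (vertical box crossings via Lemma \ref{boxcrossingproportion}) and $F_m=\cup_n F_{m,n}$ (independent linear-speed epochs built from the auxiliary processes of Definition \ref{rightedgeauxillary}), the same inductive deterministic containment into survival, and the same union bound with $\mathbb{P}(F_m^c)\le(1-q')^{b_m}$. You even state the containment with the correct target event $\{\tau^\emptyset=\infty\}$, where the paper's displayed equation (\ref{impliessurvival}) contains a typo.
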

We will now bound the survival function for $\eta'_t$ when $\eta'_0 = \{0 \}$.
\begin{proof}[Proof of Lemma \ref{survivefunctnbound}]
We can apply Lemma \ref{survivalsomespeed} to obtain universal constants $c_3$, $c_4 >0$ so that,
\begin{equation}
    \label{atandnospeed}
    \mathbb{P}\left( A_t \cap \{ \sup_{0\leq s \leq t} \mathcal{R}(\eta'_s) \geq \frac{t^{1-\frac{\delta}{2}}}{2^{\frac{2-\delta}{2(1-\delta)}}} \} \right) \geq 1 - c_3 \exp\left( - c_4 t^{\frac{\delta}{2}} \right).
\end{equation}
Noting that,
\begin{equation}
    \left( t^{1-\frac{\delta}{2}} \right)^{\frac{2-\delta}{2(1-\delta)}}= t^{1+\frac{\delta^2}{4(1-\delta)}},
\end{equation}
we have for $t \geq 1$,
\begin{equation}
    \label{atdkcontainment}
    A_t \cap \{ \sup_{0\leq s \leq t} \mathcal{R}(\eta'_s) \geq t^{1-\frac{\delta}{2}} \} \subseteq A_t \cap D_{t^{1-\frac{\delta}{2}}}.
\end{equation}
And therefore using Lemma \ref{dkrare}, (\ref{atandnospeed}), (\ref{atdkcontainment}), and that $\delta \in (0,1)$ we have for $t \geq 1$,
\begin{equation}
    \label{survivalfunctionbound}
    \begin{aligned}
    & \mathbb{P}\left( t < \tau^{\emptyset} < \infty \right) \\
    & = \mathbb{P}\left( A_t \cap \{ \tau^\emptyset < \infty \} \right) \\
    &\leq \mathbb{P}\left( A_t \cap \{ \sup_{0 \leq s \leq t} \mathcal{R}(\eta'_s ) < t^{1-\frac{\delta}{2}} \}\right) + \mathbb{P}\left( D_{t^{1-\frac{\delta}{2}}} \cap \{ \tau^\emptyset < \infty \} \right) \\
    &\leq c_3 \exp\left( - c_4 t^{\frac{\delta}{2}} \right) + c_7 \exp\left(-c_8 t^{\frac{\delta(1-\frac{\delta}{2})}{2(1-\delta)}} \right) \\
    & \leq c_{9} \exp\left( - c_{10} t^{\frac{\delta}{4}} \right),
    \end{aligned}
\end{equation}
completing the lemma, with us using on the last inequality that $\delta \in (0,1)$. \\
\end{proof}
\subsection{Proof of Theorems 3 \& 4}
We now generalize the results of Lemma \ref{survivefunctnbound} in order to show Theorems \ref{mainthmsurvivaltime} and \ref{mainthmsurvivalsize}. Theorems \ref{mainthmsurvivaltime} and \ref{mainthmsurvivalsize} are applicable to both $\xi_t$ and $\eta_t$. We will prove each for $\eta_t$ without loss of generality. We start by showing the lemma,
\begin{lemma}
    \label{mixingsetuplemma}
    Let $\{ X_i \}_{i=1}^\infty$ be iid with the law of $\tau^{\emptyset}$, the first extinction time of $\eta'_t$ conditioned on the event $\tau^\emptyset < \infty$. Let $Q$ be an independent geometric random variable with success probability $q''$ independent of $\{ X_i \}_{i=1}^\infty$. Then there exists universal constants $c_{11}$, $c_{12} > 0$ so that for any $n > 0$,
    \begin{equation}
        \mathbb{P}\left( \sum_{i=1}^Q X_i > n \right) \leq c_{11} \exp \left( - c_{12} n^{\frac{\delta}{8}} \right).
    \end{equation}
    \begin{proof}
    Since $q'' = \mathbb{P}\left( \tau^\emptyset = \infty \right) \in (0,1)$ we can apply Lemma \ref{survivefunctnbound} and say there exists a universal constant $c_{13} > 0$ so that for any $n >0$,
    \begin{equation}
    \label{tailobs}
        \mathbb{P}\left( X_i > n \right) \leq c_{13} \exp \left( -c_{10} n^{\frac{\delta}{4}}\right).
    \end{equation}
    Next, we define the events $E_1 = \{ Q \leq n^{\frac{\delta}{4}} \}$ and $E_2 = \{ \max_{1 \leq i \leq n^{\frac{\delta}{4}}} X_i \leq n^{1-\frac{\delta}{4}}$ \}.
    We see by (\ref{tailobs}) that,
    \begin{equation}
    \label{easygeomprobs}
    \begin{aligned}
        \mathbb{P}(E_1) &= 1 -  (1-q'')^{n^{\frac{\delta}{4}}} \\
        \mathbb{P}(E_2) & \geq 1-  n^{\frac{\delta}{4}}c_{13} \exp\left( -c_{10} n^{\frac{\delta}{4} - \frac{\delta^2}{16}}\right).
    \end{aligned}
    \end{equation}
    We see on the event $E_1 \cap E_2$ that,
    \begin{equation}
        \sum_{i=1}^Q X_i \leq n.
    \end{equation}
    Thus since $\delta \in (0,1)$ we complete the lemma by applying a union bound to $\mathbb{P}\left( E_1^c \cup E_2^c \right)$ based on (\ref{easygeomprobs}), and using that $\frac{\delta}{4}- \frac{\delta^2}{16} \geq \frac{\delta}{8}$.
    \end{proof}
\end{lemma}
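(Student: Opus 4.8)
The plan is to control the random sum $\sum_{i=1}^Q X_i$ by separating the two sources of randomness: the number of summands $Q$, which has geometric (hence exponentially light) tails, and the magnitudes of the summands $X_i$, which inherit a stretched exponential tail from Lemma~\ref{survivefunctnbound}. The guiding heuristic is the elementary bound $\sum_{i=1}^Q X_i \leq Q \cdot \max_{i \leq Q} X_i$, so that $\sum_{i=1}^Q X_i$ can only exceed $n$ if either $Q$ is large or some $X_i$ is large.

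First I would convert the unconditional estimate of Lemma~\ref{survivefunctnbound} into a tail bound on the conditional law of $\tau^\emptyset$. Since survival has positive probability (Lemma~\ref{halflinesurvival}), $\mathbb{P}(\tau^\emptyset < \infty) \in (0,1)$ is a fixed constant, and dividing through gives, for all $m > 0$,
\begin{equation*}
    \mathbb{P}(X_i > m) = \frac{\mathbb{P}(m < \tau^\emptyset < \infty)}{\mathbb{P}(\tau^\emptyset < \infty)} \leq c_{13}\exp\!\left(-c_{10}\, m^{\delta/4}\right)
\end{equation*}
for a suitable constant $c_{13}$.

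Next I would fix the split exponent and introduce the events $E_1 = \{Q \leq n^{\delta/4}\}$ and $E_2 = \{\max_{1 \leq i \leq \lceil n^{\delta/4}\rceil} X_i \leq n^{1 - \delta/4}\}$, chosen so that on $E_1 \cap E_2$ one has $\sum_{i=1}^Q X_i \leq n^{\delta/4}\cdot n^{1-\delta/4} = n$; hence $\{\sum_{i=1}^Q X_i > n\} \subseteq E_1^c \cup E_2^c$. The geometric tail gives $\mathbb{P}(E_1^c) = (1-q'')^{\lfloor n^{\delta/4}\rfloor} \leq c\exp(-c' n^{\delta/4})$, and a union bound over the $\lceil n^{\delta/4}\rceil$ coordinates together with the conditional tail above gives $\mathbb{P}(E_2^c) \leq \lceil n^{\delta/4}\rceil\, c_{13}\exp(-c_{10}\, n^{\delta/4 - \delta^2/16})$. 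Combining these by one more union bound and using $\delta \in (0,1)$ — which makes $\delta/4 - \delta^2/16 \geq \delta/8$ and also lets the polynomial prefactor $n^{\delta/4}$ be absorbed into the stretched exponential — both terms are at most $c\exp(-c' n^{\delta/8})$, which is the claim.

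I do not expect a genuine obstacle here; the argument is the standard "a random sum is small unless it has many terms or one big term" estimate. The only point demanding care is the choice of the split: the number of terms should be allowed up to $n^{\delta/4}$ and each term up to $n^{1-\delta/4}$ so that their product is exactly $n$, while after composing the size threshold with $m \mapsto m^{\delta/4}$ coming from the $X_i$-tail one still retains a fixed positive power of $n$; checking $\delta/4 - \delta^2/16 \geq \delta/8$ then certifies that the final exponent $\delta/8$ can be taken uniform in $\delta$.
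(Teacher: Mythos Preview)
Your proposal is correct and follows essentially the same approach as the paper: the same conditional tail bound derived from Lemma~\ref{survivefunctnbound}, the same splitting events $E_1=\{Q\le n^{\delta/4}\}$ and $E_2=\{\max_{i\le n^{\delta/4}}X_i\le n^{1-\delta/4}\}$, and the same union bound with the check $\delta/4-\delta^2/16\ge\delta/8$. Your handling of the floors/ceilings is, if anything, slightly more careful than the paper's.
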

Let $\eta_t$ be the right edge modified process so that $\mathcal{R}(\eta_0)  \in \mathbb{Z}$. We will now define a recursive process setting initially $T_0 = 0$, and $i = 0$,
        \begin{enumerate}
            \item For $i \geq 0$ let $\tau^{\emptyset,i+1}$ be the first hitting time of $\eta^{T_i}_s$ to the all susceptible state $\emptyset$. 
            \begin{enumerate}
                \item If $\tau^{\emptyset, i+1} = \infty$, set $I = i+1$, $T = T_i$ and terminate the loop
                \item If $\tau^{\emptyset,i+1} < \infty$, increment,
                \begin{equation}
                    T_{i+1} = T_i + \tau^{\emptyset, i+1}.
                \end{equation}
                Additionally set $i = i+1$ and move back to step 1.
            \end{enumerate}
        \end{enumerate}
        We note by the strong Markov property that the set $\{ \tau^{\emptyset, i} \}_{i=1}^{I}$ are each independent. Based on definition \ref{rightedgeauxillary} if $\eta_{T_i} = \emptyset$ for any $i \geq 0$, then $\eta_s^{T_i}$ is the right edge modified contact process with the origin initially infected beginning at time $T_i$. We know that for $i < I$, each $\tau^{\emptyset,i}$ are distributed via the law of $\tau^{\emptyset}$ conditioned on the event $\{ \tau^{\emptyset} < \infty \}$, while via the rejection method for sampling, $\eta^T_s$ has the law of the right edge modified contact process conditioned on the event of survival. We know the event of survival has probability $q'' > 0$. Based on this and Lemma \ref{mixingsetuplemma} we can then conclude,
        \begin{equation}
        \label{tgreaterthann}
            \mathbb{P}\left( T > n \right) \leq c_{11} \exp \left( -c_{12} n^{\frac{\delta}{8}}\right) .
        \end{equation}
We will now prove Theorem \ref{mainthmsurvivaltime}.
\begin{proof}[Proof of Theorem \ref{mainthmsurvivaltime}]
Let $T$ be defined as in section 4.3, and let $\eta_t$ be the right edge modified contact process with $|\eta_0| < \infty$. We note by attractiveness at the right edge, if $\tau^\emptyset > t$, and $T \leq t$, we can conclude that $\tau^\emptyset = \infty$ almost surely. Using this we have,
\begin{equation}
    \label{survivalexpressionthm1}
    \begin{aligned}
        \mathbb{P}\left( t \leq \tau^\emptyset < \infty \right) &\leq \mathbb{P}\left( t < \tau^\emptyset < \infty, \ T \leq t  \right) + \mathbb{P}\left( T > t \right) \\
        & \leq 0 + c_{11} \exp\left(-c_{12} t^{\frac{\delta}{8}} \right).
    \end{aligned}
\end{equation}
\end{proof}
We will leverage a coupling formed by Liggett and later stated in \cite{durrett1983supercritical} to prove Theorem \ref{mainthmsurvivalsize}. Liggett's observation shows if $\{ \zeta_t \}_{t \geq 0}$ is the one dimensional contact process with infection rate $\lambda$ and $|\zeta_0 | = n $, we can form a coupling with $\zeta^{[0,n]}$, a copy of the contact process with $[0,n] \cap \mathbb{Z}$ initially infected so that $| \zeta_t| \geq |\zeta^{[0,n]}_t|$ for all $t \geq 0$. As a consequence we have for $t \geq 0$,
\begin{equation}
\label{liggettcouplingcorrolary}
    \mathbb{P}\left( \zeta_t \neq \emptyset\right)  \leq \mathbb{P}\left( \zeta^{[0,n]} \neq \emptyset\right).
\end{equation}
We will now prove Theorem \ref{mainthmsurvivalsize}.
\begin{proof}[Proof of Theorem \ref{mainthmsurvivalsize}]
We suppose that $\eta_t$ is a copy of the right edge modified contact process with $|\eta_0| = n$. We use again that $\tau^\emptyset= \infty$ almost surely on the event that $\tau^\emptyset > n$ and $T \leq n$. We write,
\begin{equation}
\label{survivalcoupling}
\begin{aligned}
    \mathbb{P}\left( \tau^{\emptyset} = \infty \right) &\geq \mathbb{P}\left( \tau^\emptyset > n, \ T \leq n \right) \\
    & \geq 1 - \mathbb{P}\left( \tau^\emptyset \leq n \right) - \mathbb{P}\left( T > n\right).
\end{aligned}
\end{equation}
By domination $|\eta_t| \geq |\zeta_t|$, where $\zeta_t$ is the critical contact process with $\zeta_0 = \eta_0$. Using $|\zeta_0| =n$, (\ref{liggettcouplingcorrolary}), (\ref{survivalcoupling}), and the same method as Lemma \ref{boxcrossingproportion} we can write,
\begin{equation}
\label{criticalsurvivellinear}
        \mathbb{P}\left( \tau^\emptyset > n \right) \geq \mathbb{P}\left( \zeta_n \neq \emptyset \right) \geq 1 - c_5 \exp\left(-c_6 n^{\delta} \right).
\end{equation}
Hence by (\ref{tgreaterthann}), (\ref{survivalcoupling}), and (\ref{criticalsurvivellinear}) we have,
\begin{equation}
    \mathbb{P}\left( \tau^\emptyset < \infty\right) \leq c_{11} \exp\left( -c_{12} n^{\frac{\delta}{8}}\right) + c_5 \exp\left( - c_6 n^{\delta} \right),
\end{equation}
completing the theorem.
\end{proof}
\section{Law of Large Numbers for $\mathcal{R}(\eta_t)$}
In this section we will form a law of large numbers for the right edge $\mathcal{R}(\eta_t)$ when $\eta_0 \in \Sigma^{\ominus}$. As before we will assume throughout this section that $\lambda_i = \lambda_c$ and $\lambda_e = \lambda_c + \varepsilon$ for some $\varepsilon > 0$. We also note that when $\eta_0 \in \Sigma^{\ominus}$, then $\xi_t$ and $\eta_t$ will have the same dynamics, since $\mathcal{L}(\xi_t) = - \infty$ for all $t$.
\subsection{Bounding Increments of $\mathcal{R}(\eta_t)$ under $\tilde{\mu}$}
We let $\tilde{\eta}_t$ denote the right edge modified contact process with infection rates $\lambda_i = \lambda_c$ and $\lambda_e = \lambda_c + \varepsilon$ for $\varepsilon > 0$, with $\tilde{\eta}_0$ sampled from $\tilde{\mu}$. Note that $\mathcal{R}(\tilde{\eta}_0) = 0$. We will use a similar method of Galves and Presutti in \cite{galves1987edge} to bound the increments of $\mathcal{R}(\tilde{\eta}_t)$. We set $q'' >0$ to be the survival probability of $\eta'_t$, the right edge modified contact process with the origin initially infected.

We will now form a large deviation inequality for $\mathcal{R}(\tilde{\eta}_t)$ for any $t > 0$. 
\begin{lemma}
    \label{largerdeviationincrement}
    Let $\tilde{\eta}_t$ be the right edge modified contact process with infection rates $\lambda_i = \lambda_c$ and $\lambda_e = \lambda_c + \varepsilon$ with $\tilde{\eta}_0$ sampled from $\tilde{\mu}$ the invariant measure from the right edge. Then there exists universal constants $c_{14}>0$ so that for any $n > 0$,
    \begin{equation}
        \mathbb{P}\left( \sup_{0 \leq s \leq t} \left| \mathcal{R}(\tilde{\eta}_s) \right| > 4 \left(\lambda_c + \varepsilon \right)\left( t + n \right)\right) \leq c_{14} \exp\left( - c_{12} n^{\frac{\delta}{8}} \right).
    \end{equation}
    \begin{proof}
        We use that $\mathcal{R}(\tilde{\eta}_t)$ is equal in distribution to $\mathcal{R}(\tilde{\eta}_{t+n}) - \mathcal{R}(\tilde{\eta}_n)$. We will define the random variables $T, I, \{\tau_i\}_{i=1}^I, \{\tau^{\emptyset,i}\}_{i=1}^I, \{ \{\eta^i_s\}_{s \geq 0 } \}_{i=1}^I$ as in section 5.3. Using the attractiveness property at the right edge, for all $s > 0$ we can couple $\mathcal{R}(\tilde{\eta}_{T+s}) - \mathcal{R}(\tilde{\eta}_T)$ with $\mathcal{R}(\eta^T_s)$. We define the event $D = \{ \eta'_t \text{ survives} \}$, and note that $\mathbb{P}(D) = q'' > 0$. We define the event $A$ as the event where Richardson's model (the model with no sites recovering) beginning at the origin and moving in the backwards direction exceeds a distance of $2(\lambda_e + \varepsilon)(n+t)$ at or before time $(n+t)$. We note that,
        \begin{equation}
        \label{richardsonbound1}
             \left\{ \inf_{0 \leq s \leq t + n}  \mathcal{R}(\eta'_s) < -2(\lambda_c + \varepsilon)( t + n) \right\} \cap D \subseteq  A \cap D,
        \end{equation}
        where $\{ \eta'_s\}_{s \geq 0}$ is the right edge modified contact process with a single infection at the origin. We now also use the bound that if $Y \sim \text{Pois}(\lambda)$ then for $a > 0$,
\begin{equation}
    \label{poistailbound}
    \mathbb{P}\left( Y > \lambda + a \right) \leq \exp\left( -\frac{a^2}{2(\lambda + a/3)} \right).
\end{equation}
        Using (\ref{richardsonbound1}), (\ref{poistailbound}), and that $\mathbb{P}(D) = q''$ we have,
        \begin{equation}
        \label{backwardsincrementcareful}
        \begin{aligned}
            &\mathbb{P}\left(\inf_{0 \leq s \leq t+n} \mathcal{R}(\eta^T_s) < -2(\lambda_c + \varepsilon)( t + n)  \right) \\
            &=\mathbb{P}\left( \inf_{0 \leq s < t + n} \mathcal{R}(\eta'_s) <-2(\lambda_c + \varepsilon)( t + n) \ | \ D \right) \\
            &= (q'')^{-1} \mathbb{P}\left( \left\{ \inf_{0 \leq s \leq t + n} \mathcal{R}(\bar{\eta}_s) < -2(\lambda_c + \varepsilon)( t + n) \right\} \cap D \right) \\
            & \leq (q'')^{-1} \mathbb{P}\left( A \cap D \right) \\
            &\leq (q'')^{-1} \exp \left( -\frac{3(t+n)(\lambda_c + \varepsilon)}{8} \right).
        \end{aligned}
        \end{equation}
We now similarly use that $\{\mathcal{R}(\bar{\eta}_t)\}_{t \geq 0}$ is dominated by a rate $\lambda_c + \varepsilon$ Poisson process. Using (\ref{poistailbound}) we obtain,
\begin{equation}
    \label{forwardincrementlabel}
    \begin{aligned}
    &\mathbb{P}\left( \sup_{0 \leq s \leq t + n } \mathcal{R}(\eta^T_s) > 2(\lambda_c + \varepsilon)(t + n)\right) \\
    &= \mathbb{P}\left( \sup_{0 \leq s \leq t + n} \mathcal{R}(\bar{\eta}_s) > 2(\lambda_c + \varepsilon)(t + n) \ | \ D \right) \\
    &\leq (q'')^{-1} \mathbb{P}\left( \sup_{0 \leq s \leq t + n} \mathcal{R}(\bar{\eta}_s) > 2(\lambda_c + \varepsilon)(t + n)  \right) \\
    &\leq (q'')^{-1} \exp \left( -\frac{3(t+n)(\lambda_c + \varepsilon)}{8} \right).
    \end{aligned}
\end{equation}
Note on the event $E$ defined as,
\begin{equation}
    \left\{ T \leq n \right\} \cap \left\{ \inf_{0 \leq s \leq t+n } \mathcal{R}(\eta^T_s) \geq -2 (\lambda_c + \varepsilon)(t+n) \right\} \cap \left\{ \sup_{0 \leq s \leq t + n} \mathcal{R}(\eta^T_s) \leq 2 (\lambda_c + \varepsilon)(t+n)\right\},
\end{equation}
we have by attractiveness from the right edge on $E$,
\begin{equation}
    \begin{aligned}
    &\sup_{n \leq s \leq n+t} \left| \mathcal{R}(\tilde{\eta}_s) - \mathcal{R}(\eta_n) \right|  \\
    & \leq \sup_{T \leq s \leq n + t} \mathcal{R}(\tilde{\eta}_s) - \inf_{T \leq s \leq n + t} \mathcal{R}(\tilde{\eta}_s) \\
    & =  \sup_{T \leq s \leq n + t} \left( \mathcal{R}(\tilde{\eta}_s) - \mathcal{R}(\tilde{\eta}_T) \right) - \inf_{T \leq s \leq n + t} \left(\mathcal{R}(\tilde{\eta}_s) - \mathcal{R}(\tilde{\eta}_T) \right) \\
    & = \sup_{0 \leq s \leq n+t-T} \mathcal{R}(\eta^T_s) - \inf_{0 \leq s \leq t+n-T} \mathcal{R}(\eta^T_s) \\
    & \leq \sup_{0 \leq s \leq t+n} \mathcal{R}(\eta^T_s) - \inf_{0 \leq s \leq t+n} \mathcal{R}(\eta^T_s) \\
    & \leq 4( \lambda_e +  \varepsilon)(t+n).
    \end{aligned}
\end{equation}
Using (\ref{tgreaterthann}), (\ref{backwardsincrementcareful}), and (\ref{forwardincrementlabel}) we have that,
\begin{equation}
    \mathbb{P}\left( E \right) \geq 1 - c_{11}\exp\left(-c_{12}n^{\frac{\delta}{8}} \right) - 2 (q'')^{-1}\exp\left(- \frac{3(t+n)(\lambda_c + \varepsilon)}{8} \right),
\end{equation}
which then completes the lemma.
\end{proof}
\end{lemma}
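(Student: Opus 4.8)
The plan is to transfer the estimate, via stationarity of $\tilde{\mu}$, to a fluctuation bound for the right edge of a process started from a single site, and then control that bound using the attractiveness property at the right edge, the regeneration construction of Section 5.3, and crude Poisson comparisons.

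First I would invoke invariance of $\tilde{\mu}$ for the process seen from the right edge: for any $n>0$ the quantity $\sup_{0\le s\le t}|\mathcal{R}(\tilde{\eta}_s)|$ has the same law as $\sup_{n\le s\le n+t}|\mathcal{R}(\tilde{\eta}_s)-\mathcal{R}(\tilde{\eta}_n)|$, using that $\mathcal{R}(\tilde{\eta}_0)=0$. On the event $\{T\le n\}$, where $T$ is the regeneration time of Section 5.3 whose tail is controlled by (\ref{tgreaterthann}), I would then use the attractiveness property at the right edge to couple, for every $s\ge 0$, the increment $\mathcal{R}(\tilde{\eta}_{T+s})-\mathcal{R}(\tilde{\eta}_T)$ with $\mathcal{R}(\eta^T_s)$; by the rejection-sampling description of Section 5.3 and Definition \ref{rightedgeauxillary}, $\eta^T$ is a right edge modified contact process started from a single infection at the origin and conditioned on survival. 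Consequently, on $\{T\le n\}$ the oscillation of $\mathcal{R}(\tilde{\eta}_\cdot)$ over $[n,n+t]$ is at most the range $\sup_{0\le s\le t+n}\mathcal{R}(\eta^T_s)-\inf_{0\le s\le t+n}\mathcal{R}(\eta^T_s)$ of the auxiliary process on $[0,t+n]$.

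It then remains to bound this range. For the rightward part, $s\mapsto\mathcal{R}(\eta^T_s)$ is dominated by a rate-$(\lambda_c+\varepsilon)$ Poisson process, so a Poisson tail bound gives $\sup_{0\le s\le t+n}\mathcal{R}(\eta^T_s)\le 2(\lambda_c+\varepsilon)(t+n)$ off an event of probability at most $(q'')^{-1}\exp(-c(t+n))$, the factor $(q'')^{-1}$ absorbing the cost of conditioning on survival (with $q''=\mathbb{P}(\tau^\emptyset=\infty)>0$ a fixed constant). For the leftward part, I would compare with Richardson's model (no recoveries) started at the origin and run to the left: no infection path can displace the edge more than this model does, and the leftmost reach of Richardson's model by time $t+n$ is again dominated by a Poisson process, so $\inf_{0\le s\le t+n}\mathcal{R}(\eta^T_s)\ge -2(\lambda_c+\varepsilon)(t+n)$ off a similarly small event. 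Taking $E$ to be the intersection of $\{T\le n\}$ with these two events, on $E$ the range is at most $4(\lambda_c+\varepsilon)(t+n)$, and a union bound on $E^c$ combining (\ref{tgreaterthann}) with the two Poisson tails yields the stretched-exponential bound in $n$, the Poisson tails being genuinely exponential in $t+n$ and hence negligible next to the $n^{\delta/8}$ term.

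The main obstacle I anticipate is bookkeeping around the survival conditioning and the regeneration: because $\eta^T$ is the single-site process conditioned on $\{\tau^\emptyset=\infty\}$ rather than the raw process, every estimate about $\mathcal{R}(\eta^T)$ must be converted from a statement about the unconditioned process (picking up the harmless factor $(q'')^{-1}$), and one must check via Definition \ref{rightedgeauxillary} that at a regeneration the auxiliary process genuinely restarts as a fresh single-site process with no spatial shift, so that both the regeneration tail (\ref{tgreaterthann}) and the Poisson/Richardson domination apply without modification. The leftward comparison is the only place where the non-monotonicity of $\mathcal{R}$ matters, and it is handled precisely because Richardson's model upper bounds the spatial spread regardless of which edge rates are in force.
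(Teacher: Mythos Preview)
Your proposal is correct and follows essentially the same route as the paper: shift by stationarity to the interval $[n,n+t]$, couple via attractiveness at the right edge with the auxiliary process $\eta^T$ on $\{T\le n\}$, bound its range by a Poisson comparison on the right and a Richardson-model comparison on the left (each paying a factor $(q'')^{-1}$ for the survival conditioning), and then take a union bound with the tail of $T$ from (\ref{tgreaterthann}). Your anticipated obstacle is exactly the bookkeeping the paper carries out, and there is no substantive difference in strategy.
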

\subsection{Large Deviations for $\mathcal{R}(\tilde{\eta}_t)$}
We will now prove Theorem \ref{mainthmrttail} which provides stretched exponential bounds for the right edge process at stationary $\mathcal{R}(\tilde{\eta}_t)$ from $\alpha t$. We first show the following lemma,
\begin{lemma}    
    \label{rightmeanlemma}
    Let $\tilde{\eta}_t$ be the right edge modified contact process with $\eta_0$ sampled from the invariant measure from the right edge $\tilde{\mu}$. Then $\mathbb{E}(\mathcal{R}(\eta_1)) = \alpha \in \mathbb{R}$ and for all $t > 0$, 
    \begin{equation}
        \mathbb{E}\left( \mathcal{R}(\tilde{\eta}_t) \right) = \alpha t
    \end{equation}
    \begin{proof}
        Showing $\alpha = \mathbb{E}\left( \mathcal{R}(\tilde{\eta}_1)\right) \in \mathbb{R}$ is a direct consequence of Lemma \ref{largerdeviationincrement}. Fix $t > 0$. Using Lemma \ref{largerdeviationincrement} again we can also conclude that the collection $\{ \dfrac{\mathcal{R}(\tilde{\eta}_s)}{s} \}_{s \geq 1}$ is uniformly integrable. By corollary 1.3 of \cite{terra2024dynamic} there exists a random variable $Y$ so that as $s \rightarrow \infty$,
        \begin{equation}
        \label{almostsureergotic}
            \dfrac{\mathcal{R}(\tilde{\eta}_s)}{s} \overset{a.s.}{\rightarrow} Y,
        \end{equation}
        where $\mathbb{E}(Y) = \alpha$. Using (\ref{almostsureergotic}) and uniform integrability we have,
        \begin{equation}
        \label{alphadef1}
            \lim_{n \rightarrow \infty} \dfrac{\mathbb{E}\left( \mathcal{R}(\tilde{\eta}_{nt}) \right)}{nt} = \alpha .
        \end{equation}
        For any $n \in \mathbb{N}$ using that $\tilde{\mu}$ is an invariant measure we have,
        \begin{equation}
        \label{alphadef2}
        \begin{aligned}
            &\mathbb{E}\left( \mathcal{R}(\tilde{\eta}_{nt}) \right) \\
            &= \sum_{i=1}^n\mathbb{E}\left( \mathcal{R}(\tilde{\eta}_{it}) - \mathcal{R}(\tilde{\eta}_{(i-1)t})   \ \right) \\
            &= n \mathbb{E}\left( \mathcal{R}(\tilde{\eta}_t \right)
        \end{aligned}
        \end{equation}
        And thus combining (\ref{alphadef1}) and (\ref{alphadef2}) we have that $\mathbb{E}(\mathcal{R}(\tilde{\eta}_t)) = \alpha t$.
    \end{proof}
\end{lemma}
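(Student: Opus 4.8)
The plan is to combine three ingredients: the stretched exponential tail bound from Lemma \ref{largerdeviationincrement}, the almost sure ergodic convergence of $\mathcal{R}(\tilde{\eta}_s)/s$ recorded in Corollary 1.3 of \cite{terra2024dynamic}, and the stationarity of the increments of $\mathcal{R}(\tilde{\eta}_t)$ that follows from $\tilde{\mu}$ being invariant for the process seen from the right edge. I would first set $\alpha := \mathbb{E}(\mathcal{R}(\tilde{\eta}_1))$ and establish it is a genuine real number, then prove additivity of $t \mapsto \mathbb{E}(\mathcal{R}(\tilde{\eta}_t))$ and identify the slope via the ergodic limit.

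First I would use Lemma \ref{largerdeviationincrement} to do two things at once. Taking $t = 1$ there gives $\mathbb{E}|\mathcal{R}(\tilde{\eta}_1)| < \infty$, so $\alpha$ is well defined. Taking $t = s$ and choosing $n = s\left(\frac{\lambda}{4(\lambda_c+\varepsilon)} - 1\right)$ yields, for all $s \geq 1$ and all sufficiently large $\lambda$, a bound $\mathbb{P}\left(|\mathcal{R}(\tilde{\eta}_s)|/s > \lambda\right) \leq c\exp\left(-c'\lambda^{\delta/8}\right)$ with constants independent of $s$, where one uses $n \geq s\lambda/(8(\lambda_c+\varepsilon)) \geq \lambda/(8(\lambda_c+\varepsilon))$. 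Integrating this tail in $\lambda$ shows that $\{\mathcal{R}(\tilde{\eta}_s)/s\}_{s\geq 1}$ is uniformly integrable.

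Next I would invoke Corollary 1.3 of \cite{terra2024dynamic}, which supplies a random variable $Y$ with $\mathcal{R}(\tilde{\eta}_s)/s \to Y$ almost surely as $s \to \infty$; combined with the uniform integrability above, this upgrades to $L^1$ convergence, so $\mathbb{E}(\mathcal{R}(\tilde{\eta}_s))/s \to \mathbb{E}(Y)$, and in particular $\mathbb{E}(\mathcal{R}(\tilde{\eta}_{nt}))/(nt) \to \mathbb{E}(Y)$ for every fixed $t > 0$. Separately, since $\tilde{\mu}$ is invariant for $\{\Psi\tilde{\eta}_t\}$ and the increment $\mathcal{R}(\tilde{\eta}_{it}) - \mathcal{R}(\tilde{\eta}_{(i-1)t})$ is a translation covariant functional of $\Psi\tilde{\eta}_{(i-1)t}$ and of the graphical clocks on $[(i-1)t, it]$, the increments form a stationary sequence, each distributed as $\mathcal{R}(\tilde{\eta}_t)$ (using $\mathcal{R}(\tilde{\eta}_0) = 0$). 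Telescoping and taking expectations gives $\mathbb{E}(\mathcal{R}(\tilde{\eta}_{nt})) = n\,\mathbb{E}(\mathcal{R}(\tilde{\eta}_t))$ for all $n$ and $t$. Dividing by $nt$ and letting $n \to \infty$ forces $\mathbb{E}(\mathcal{R}(\tilde{\eta}_t))/t = \mathbb{E}(Y)$ for every $t > 0$; taking $t = 1$ identifies $\mathbb{E}(Y) = \alpha$, and the claim $\mathbb{E}(\mathcal{R}(\tilde{\eta}_t)) = \alpha t$ follows.

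I expect the main obstacle to be the first step, namely extracting genuine uniform integrability of $\{\mathcal{R}(\tilde{\eta}_s)/s\}_{s\geq 1}$ from Lemma \ref{largerdeviationincrement}: its bound is phrased in the somewhat indirect $t+n$ form, and one has to verify that the implied tail constants really are uniform in $s$ down to $s = 1$ (for small $s$ the term $\mathbb{P}(T > n)$ entering that lemma dominates, so $n$ must be taken of order $s\lambda$ rather than $\lambda$). A secondary, more bookkeeping point is justifying that the $\mathcal{R}$-increments are exactly stationary under $\tilde{\mu}$, which relies on \cite{terra2024dynamic} providing that the process viewed from the right edge is Markov with invariant law $\tilde{\mu}$, together with the translation covariance of the sitewise construction.
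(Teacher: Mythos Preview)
Your proposal is correct and follows essentially the same approach as the paper: finiteness of $\alpha$ and uniform integrability from Lemma \ref{largerdeviationincrement}, almost sure convergence from Terra's ergodic result, stationarity of increments from invariance of $\tilde{\mu}$, and then the telescoping identity $\mathbb{E}(\mathcal{R}(\tilde{\eta}_{nt})) = n\,\mathbb{E}(\mathcal{R}(\tilde{\eta}_t))$. The only cosmetic difference is that the paper asserts $\mathbb{E}(Y)=\alpha$ as part of Terra's corollary, whereas you close the loop yourself by specializing to $t=1$ at the end; your version also spells out the choice of $n$ in Lemma \ref{largerdeviationincrement} needed for uniform integrability, which the paper leaves implicit.
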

We now prove Theorem \ref{mainthmrttail}.
\begin{proof}[Proof of Theorem \ref{mainthmrttail}]
Fix $\gamma \in [0,\frac{1}{3})$ and define the constants,
\begin{equation}
    \begin{aligned}
        \gamma_1 &= \frac{\gamma + \frac{1}{3}}{2} \\
        \gamma_2 &= \frac{1}{12} - \frac{\gamma}{4}.
    \end{aligned}
\end{equation}
We note that $\gamma_1 - 2\gamma_2  = \gamma$, and $\gamma_1 < \frac{1}{3}$, and $\gamma_2 > 0$. Let $M_0 = -t^{\gamma_1}$, and set $i = 1$. We will define epochs in time iteratively by the procedure below. 
\begin{enumerate}
    \item When $i \geq 1$ is fixed, set $j = 1$, and $T_{i,0} =  t^{\gamma_1}$. We will now run the procedure iteratively for $j$.
        \item Let $\tau^{i,j}$ be the first hitting time of $\eta^{M_{i-1} +T_{i,j-1}}_s$ to the all susceptible state $\emptyset$. Define the event, $D_{ij} = \{ \tau^{i,j } > t^{\gamma_1} \}$. 
        \begin{enumerate} 
        \item On the event $D_{ij}$ set $M_i = M_{i-1} + T_{i,j-1}$ and $I_{i} = j$. Set $T_{i,I_i} = t^{\gamma_1}$ and $j = 1$, $i = i+1$. Return to step $1$. 
        \item On the event $D_{ij}^c$ set $T_{i,j} = T_{i,j-1} + \tau^{i,j}$, increment $j = j + 1$ and return to step 2.
        \end{enumerate}
    \end{enumerate}
    The random times $ \{ M_i + t^{\gamma_1} \}_{i=1}^\infty$ are each stopping times. This can be seen since $M_1 + t^{\gamma_1}$ is the first time we observe a $\lambda_e$-open path starting from the right edge that survives for time $t^{\gamma_1}$. Similarly, $M_2 + t^{\gamma_1}$ is the first time following $M_1 + t^{\gamma_1}$ where we see a $\lambda_e$-open path starting from the right edge that survives for time $t^{\gamma_1}$. We can apply this reasoning inductively to confirm that each $M_i + t^{\gamma_1}$ is a stopping time for $i \geq 1$.
    
    It also follows that for $i \geq 1$, the random variable $M_i - M_{i-1}$ is a measurable function of the auxiliary processes, 
    \begin{equation}
    \left\{ \left\{ \eta^{M_{i-1}+T_{i,j-1}}_s \right\}_{s \in [0, T_{i,j}]} \right\}_{j=1}^{I_i},     
    \end{equation}
    which are independent of $\tilde{\eta}_{M_{i-1} + t^{\gamma_1}}$, and depends on instructions from the sitewise construction at disjoint times for each $i \in \mathbb{N}$. Hence by the strong Markov property we can conclude that the random variables $\{M_i - M_{i-1} \}_{i=2}^\infty$ are independent and identically distributed, and $M_1 - M_0$ has the law of $M_2 - M_1 - t^{\gamma_1}$ and is independent of $M_i - M_{i-1}$ for all $i \geq 2$.
    
    For $i \in \mathbb{Z}_{0+}$ let $\tau^{i,\emptyset}$ be the first hitting time of $\eta^{M_{i-1}}_s$ to the all susceptible configuration $\emptyset$. We define the events,
    \begin{equation}
    \label{majorevent}
    \begin{aligned}
    A_t &=  \{ \tau^{i,\emptyset} > M_{i} - M_{i-1} \text{ for } 2 \leq i \leq t^{1-\gamma_1} \} \\
    B_t &= \{ \max_{1 \leq i \leq t^{1-\gamma_1}} M_{i} - M_{i-1} \leq t^{\gamma_1} + t^{\gamma_2} \} \\
    C_t &= \{ \max_{2 \leq i \leq t^{1-\gamma_1}} \sup_{0 \leq s \leq M_{i} - M_{i-1}} | \mathcal{R}(\eta^{M_i}_{s }) |\leq 12(\lambda_c + \varepsilon)t^{\gamma_1} \text{, } |\mathcal{R}(\tilde{\eta}_{M_1})| \leq  12( \lambda_c +  \varepsilon)t^{\gamma_1} \}  \\
    E_t & = A_t \cap B_t \cap C_t.
    \end{aligned}
    \end{equation}
    We also define,
    \begin{equation}
        H(t) = \sup \{ i \in \mathbb{N} \ | \ M_{i} \leq t \}.
    \end{equation}
    Using attractiveness from the right edge, we also have that on the event $E_t$,
    \begin{equation}
        \mathcal{R}(\tilde{\eta}_t) = \mathcal{R}(\tilde{\eta}_{M_1}) + \sum_{i=1}^{t^{1-\gamma}} \mathcal{R}(\eta^{M_i}_{M_{i+1} - M_i}) + \mathcal{R}(\eta^{M_{H(t)}}_{t-M_{H(t)}}) - \sum_{i=H(t) + 1}^{t^{1-\gamma}} \mathcal{R}(\eta^{M_i}_{M_{i+1}- M_i}).
    \end{equation}
    Note that $H(t) \geq 1$ on the event $E_t$. We will now bound the probability $\mathbb{P}(E_t)$. By our construction it follows for $i \geq 1$ that $\eta^{M_i}_s$ is a copy of the right edge modified contact process with the origin initially infected with a law conditioned on the event $\{ \tau^{i,\emptyset} > t^{\gamma_1} \}$. Recall for the right edge modified contact process $\eta'_t$ with the origin initially infected,
    \begin{equation}
    \label{increasetosurvival}
        \mathbb{P}(\tau^\emptyset > t^{\gamma_1}) \geq q''.
    \end{equation}
    We can then apply (\ref{increasetosurvival}) and Lemma \ref{survivefunctnbound} to obtain for $i \geq 2$,
    \begin{equation}
    \label{atsetup1}
    \begin{aligned}
    &\mathbb{P}\left( \tau^{i,\emptyset} \leq M_{i} - M_{i-1}  \right) \\
    &\leq \mathbb{P}\left( \tau^{i,\emptyset} < \infty \right) \\
    &\leq (q'')^{-1} c_9 \exp\left( - c_{10} t^\frac{\gamma_1 \delta }{8}\right).
    \end{aligned}
    \end{equation}
    And hence applying a union bound to the bound in (\ref{atsetup1}) gives,
    \begin{equation}
    \label{atlikely}
        \mathbb{P}(A_t^c) \leq t^{1-\gamma_1} (q'')^{-1}  c_9 \exp\left( - c_{10} t^\frac{\gamma_1 \delta }{8}\right).
    \end{equation}
    We will now provide a bound to $\mathbb{P}(B_t)$. We can write for $i \geq 2$
    \begin{equation}
    \label{migeom}
        M_{i} - M_{i-1} - t^{\gamma_1} = \sum_{j=1}^{I_i} \tau^{i,j},
    \end{equation}
    We can apply the same decomposition as (\ref{migeom}) to the quantity $M_1 - M_0$. Using that for $i \geq 1$, $M_i + t^{\gamma_1}$ is a stopping time, by the strong Markov property both $I_i$ and $\sum_{j=1}^{I_i} \tau^{i,j}$ are independent of $\mathcal{F}_{M_{i-1} + t^{\gamma_1}}$. $I_i$ is dominated by a geometric$(q'')$ random variable. By the strong Markov property the collection of variables $\left\{ \tau^{i,j} \right\}_{j=1}^{I_i-1}$ are iid with the law of $\tau^\emptyset$ conditioned on the event $\tau^\emptyset < t^{\gamma_1}$. Using this and the same method as in the proof of Lemma \ref{mixingsetuplemma} we have that for all $i \in \mathbb{N}$ there exists a universal constant $c_{13} > 0$ so that for any $n > 0$,
    \begin{equation}
        \label{boundtauidif}
        \mathbb{P}\left( M_{i} - M_{i-1} > t^{\gamma_1} + n \right) \leq c_{13} \exp\left(- c_{12} n^{\frac{\delta}{8}} \right).
    \end{equation}
    Hence plugging in $n = t^{\gamma_2}$ to (\ref{boundtauidif}) and applying a union bound then gives,
    \begin{equation}
        \label{btlikely}
        \mathbb{P}\left( B_t^c \right) \leq t^{1-\gamma_1} c_{13} \exp \left(-c_{12} t^{\frac{\gamma_2 \delta}{8}} \right).
    \end{equation}
    We will now bound the probability of $C_t^c \cap A_t \cap B_t$. Using attractiveness from the right edge, we can couple $\mathcal{R}(\eta^{M_{i-1}}_s)$ with the right edge of an auxiliary process $\tilde{\eta}^*_s$ for all $s \in [0, M_{i} - M_{i-1} ]$ on the event $A_t \cap B_t$. We define $\tilde{\eta}^*_s$ as a copy of the right edge modified contact process with initial configuration sampled from $\tilde{\mu}$, and a law conditioned on the event that there exists a $\lambda_e$ open path beginning at $0$ until time at least $t^{\gamma_1}$. We note this event has a probability of at least $q''$. On the event $B_t$ we note that $M^{i} - M_{i-1} \leq t^{\gamma_1} + t^{\gamma_2}$. Let $\tilde{\eta}_s$ be the right edge modified contact process sampled via $\tilde{\mu}$ (not conditioned on any event), and $\tilde{\tau}^\emptyset$ be the first time that all $\lambda_e$-open paths of $\tilde{\eta}_s$ die out when beginning at time $0$ and site $0$. By Lemma \ref{largerdeviationincrement} we have for $2 \leq i \leq t^{1-\gamma_1}$ and $t \geq 1$, using that $\gamma_1 > \gamma_2$,
    \begin{equation}
    \label{ctlikelysetup}
    \begin{aligned}
        & \mathbb{P}\left( A_t \cap B_t \cap \left\{  \sup_{0 \leq s \leq M_{i} - M_{i-1} }\left| \mathcal{R}(\eta^{M_{i-1}}_{s })\right| > 12(\lambda_c + \varepsilon)t^{\gamma_1} \right\} \right) \\
        &\leq \mathbb{P}\left( \sup_{0 \leq s \leq t^{\gamma_1} + t^{\gamma_2}} \left| \mathcal{R}(\tilde{\eta}^*_s ) \right| > 12(\lambda_c + \varepsilon)t^{\gamma_1} \right) \\
        &= \mathbb{P}\left(  \sup_{0 \leq s \leq t^{\gamma_1} + t^{\gamma_2}} \left| \mathcal{R}(\tilde{\eta}_s ) \right| > 12( \lambda_c + \varepsilon)t^{\gamma_1} \ | \ \tilde{\tau}^\emptyset > t^{\gamma_1}\right) \\
        & \leq \mathbb{P}(\tilde{\tau}^\emptyset > t^{\gamma_1} )^{-1} \mathbb{P}\left( \sup_{0 \leq s \leq t^{\gamma_1} + t^{\gamma_2}}  \left| \mathcal{R}(\tilde{\eta}_s) \right| > 12(\lambda_c + \varepsilon)t^{\gamma_1} \right)  \\
        & \leq \mathbb{P}(\tilde{\tau}^\emptyset > t^{\gamma_1} )^{-1} \mathbb{P}\left( \sup_{0 \leq s \leq 2t^{\gamma_1} }  \left| \mathcal{R}(\tilde{\eta}_s) \right| > 12(\lambda_c + \varepsilon)t^{\gamma_1} \right)  \\
        &\leq (q'')^{-1} c_{14} \exp \left( -c_{12} t^{\frac{\gamma_1 \delta}{8}} \right).
    \end{aligned}
    \end{equation}
    Using same approach as in (\ref{ctlikelysetup}) and applying Lemma \ref{largerdeviationincrement} can be used to show when $t \geq 1$,
    \begin{equation}
        \label{ctlikelysetup2}
        \mathbb{P}\left( A_t \cap B_t \cap \left\{  \sup_{0 \leq s \leq M_{1} - M_{0} }\left| \mathcal{R}(\tilde{\eta}_{s })\right| > 12( \lambda_c + \varepsilon)t^{\gamma_1} \right\} \right) \leq c_{14} \exp \left( -c_{12}  t^{\frac{\gamma_1 \delta}{8}} \right).
    \end{equation}
    Applying a union bound with the results from (\ref{ctlikelysetup}) and (\ref{ctlikelysetup2}) yields,
    \begin{equation}
    \label{ctlikely}
        \mathbb{P}(C_t^c \cap A_t \cap B_t ) \leq t^{1-\gamma_1} (q'')^{-1} c_{14} \exp \left( -c_{12} t^{\frac{\gamma_1 \delta}{8}} \right).
    \end{equation}
    And thus applying (\ref{atlikely}), (\ref{btlikely}), and (\ref{ctlikely}), there exists universal constants $c_{15}$ and $c_{16} > 0$ so that for any $t \geq 1$,
    \begin{equation}
    \begin{aligned}
        \label{petbound}
        \mathbb{P}\left( E_t \right) & \geq 1 - \mathbb{P}(A_t^c) - \mathbb{P}(B_t^c) - \mathbb{P}(C_t^c \cap A_t \cap B_t ) \\
        & \geq 1- c_{15} \exp\left( -c_{16} t^{\frac{\gamma_2 \delta}{8}} \right).
    \end{aligned}
    \end{equation}
    We now write,
    \begin{equation}
    \label{rightmean1}
    \begin{aligned}
        \mathcal{R}(\tilde{\eta}_t) = &\mathbbm{1}\{E_t\} \left(\mathcal{R}(\tilde{\eta}_{M_1}) + \sum_{i=2}^{t^{1-\gamma_1}} \mathcal{R}(\eta^{M_{i-1}}_{M_i 
 - M_{i-1}}) + \mathcal{R}(\eta^{M_{N(t)+1}}_{t - \tau_{N(t)}}) - \sum_{i=N(t)+1}^{t^{1-\gamma_1}}  \mathcal{R}(\eta^{M_{i-1}}_{M_i 
 - M_{i-1}}) \right) + \\
 &\mathbbm{1}\{ E_t^c \} \mathcal{R}(\tilde{\eta}_t).
    \end{aligned}
    \end{equation}
    \label{rightmean2}
    By Lemma \ref{rightmeanlemma} we have that $\mathbb{E}\left( \mathcal{R}(\tilde{\eta}_t) \right) = \alpha t$ for $\alpha \in \mathbb{R} $. We also note on the event $E_t$ that pointwise,
    \begin{equation}
        \label{htbound}
        H(t) \geq \left\lfloor t^{1-\gamma_1}(1+t^{\gamma_2 - \gamma_1})^{-1} \right\rfloor \geq t^{1-\gamma_1}(1 -  t^{\gamma_2 - \gamma_1}) -1.
    \end{equation}
    Using (\ref{majorevent}), the following three inequalities hold on the event $E_t$,
    \begin{equation}
        \label{consequenceet}
        \begin{aligned}
             \left| \mathcal{R}(\eta^{M_{i-1}}_{M_i - M_{i-1}}) \right| &\leq 12(\lambda_c +  \varepsilon)t^{\gamma_1} \text{ for } 2 \leq i \leq t^{1-\gamma_1} \\
             \left| \mathcal{R}(\eta^{M_{H(t)}}_{t-M_{H(t)}}) \right| &\leq 12( \lambda_c +  \varepsilon)t^{\gamma_1} \\
             \left| \mathcal{R}(\tilde{\eta}_{M_1}) \right| &  \leq 12( \lambda_c +  \varepsilon)t^{\gamma_1}
        \end{aligned}
    \end{equation}
    Applying (\ref{htbound}) and (\ref{consequenceet}) along with the triangle inequality then gives when $t \geq 1$ on $E_t$,
    \begin{equation}
        \label{rightmean3}
        \begin{aligned}
        \left| \mathcal{R}(\tilde{\eta}_{M_1})+ \mathcal{R}(\eta^{M_{H(t)}}_{t - M_{H(t)}}) - \sum_{i=H(t)+1}^{t^{1-\gamma_1}}  \mathcal{R}(\eta^{M_{i-1}}_{M_i 
 - M_{i-1}} )\right| &\leq 12( \lambda_c +  \varepsilon)t^{\gamma_1}( t^{1+\gamma_2 - 2 \gamma_1} + 3) \\
        & \leq 48(1 + 2 \lambda_c + 2 \varepsilon)t^{1 + \gamma_2 -  \gamma_1}
        \end{aligned}
    \end{equation}
    Using Lemma \ref{largerdeviationincrement} and (\ref{petbound}) we have for $t$ large,
    \begin{equation}
    \label{rightmean6}
        \begin{aligned}
        &\left | \mathbb{E}\left( \mathbbm{1} \{ E_t^c\} \mathcal{R}(\tilde{\eta}_t)\right) \right| \\
        &\leq \int_0^{t^2} \mathbb{P}(E_t^c) dn + \int_{t^2}^\infty \mathbb{P}\left( \left| \mathcal{R}(\tilde{\eta}_t) \right| > n \right) dn \\
        & \leq t^2 c_{15} \exp\left(c_{16} t^{\frac{\gamma_2 \delta}{4}} \right) +  \int_{t^2}^\infty c_{14} \exp\left( -c_{12} \left( \frac{ n - 4(\lambda_c +  \varepsilon)(t^{\gamma_1} + t^{\gamma_2})  }{ 4( \lambda_c +  \varepsilon)} \right)^{\frac{\delta}{8}}\right) dn \\
        &= o(1).
        \end{aligned}
    \end{equation}
    And therefore, by (\ref{rightmean1}), (\ref{rightmean3}), (\ref{rightmean6}), and Lemma \ref{rightmeanlemma} we have for $t$ large,
    \begin{equation}
    \label{meanoff}
    \mathbb{E}\left( \mathbbm{1} \{ E_t \}\sum_{i=2}^{t^{1-\gamma_1}} \mathcal{R}(\eta^{M_{i-1}}_{M_i - M_{i-1}}) \right) = \alpha t + \mathcal{O}(t^{1-\gamma_1 + \gamma_2}).
    \end{equation}
    For $2 \leq i \leq t^{1-\gamma_1}$ define the event $E_{it}$ as the event,
    \begin{equation}
    \label{wehavetherightmean}
        \{ \tau^{i, \emptyset} > M_{i} - M_{i-1}  \} \cap \{ M_i - M_{i-1} \leq t^{\gamma_1} + t^{\gamma_2} \} \cap \{ \sup_{0 \leq s \leq M_i - M_{i-1}} \left| \mathcal{R}(\eta^{M_{i-1}}_s) \right| \leq 12(\lambda_c + \varepsilon)t^{\gamma_1}\}.
    \end{equation}
    
    Using that the variables $\{ M_i + t^{\gamma_1} \}_{i=1}^\infty$ are stopping times and the strong Markov property, one can show the collection of variables $\left\{  \mathcal{R}(\eta^{M_{i-1}}_{M_i - M_{i-1}}) \mathbbm{1}\{E_{it} \} \right\}_{i=2}^{t^{1-\gamma_1}} $ are independent and identically distributed. We also note that almost surely,
    \begin{equation}
    \label{hoef1}
        \left| \mathcal{R}(\eta^{M_{i-1}}_{M_i - M_{i-1}}) \mathbbm{1}\{E_{it} \} \right| \leq 12(\lambda_c + \varepsilon)t^{\gamma_1}.
    \end{equation}
    Using (\ref{petbound}) and (\ref{hoef1}), we can bound,
    \begin{equation}
    \label{hoef2}
        \mathbb{E}\left( \mathcal{R}(\eta^{M_{i-1}}_{M_i - M_{i-1}}) \mathbbm{1}\{E_{it} \} \mathbbm{1}\{E_t^c \} \right) \leq  \mathcal{O}\left( t^{\gamma_1} \exp\left( -c_{16} t^{\frac{\gamma_2 \delta}{8}} \right) \right).
    \end{equation}
    And thus by (\ref{meanoff}) and (\ref{hoef2}) we have for $2 \leq i \leq t^{1-\gamma_1}$,
    \begin{equation}
    \label{hoef3}
    \mathbb{E}\left( \mathcal{R}(\eta^{M_{i-1}}_{M_i - M_{i-1}}) \mathbbm{1}\{E_{it} \} \right) = \alpha t^{\gamma_1}\left( 1 + \mathcal{O}(t^{\gamma_2 - \gamma_1})\right).
    \end{equation}
    Let $h(t) = \mathbb{E}\left( \sum_{i=2}^{t^{1-\gamma_1}} \mathcal{R}(\eta^{M_{i-1}}_{M_i - M_{i-1}}) \mathbbm{1}\{E_{it} \} \right) $. Using (\ref{hoef1}) and (\ref{hoef3}) we have by Hoeffding's inequality for $b > 0$,
    \begin{equation}
    \label{actuallyhoef}
    \begin{aligned}
        \mathbb{P}\left( \left| \sum_{i=2}^{t^{1-\gamma_1}} \mathcal{R}(\eta^{M_{i-1}}_{M_i - M_{i-1}}) \mathbbm{1}\{E_{it} \} - h(t) \right| > \frac{bt^{1-\gamma_1 + 2\gamma_2 }}{2} \right) &\leq \exp \left( -2 \frac{\frac{1}{4}b^2 t^{2-2\gamma_1 + 4 \gamma_2}}{t^{1-\gamma_1}\left(12(\lambda_c + \varepsilon)t^{\gamma_1} \right)^2} \right) \\
        &= \exp\left(-\frac{b^2 t^{1-3\gamma_1 + 4 \gamma_2}}{288 (\lambda_c + \varepsilon)^2} \right)
    \end{aligned}
    \end{equation}
    Define the event,
        \begin{equation}
            \begin{aligned}
                F &= \left\{  \left| \sum_{i=2}^{t^{1-\gamma_1}} \mathcal{R}(\eta^{M_{i-1}}_{M_i - M_{i-1}}) \mathbbm{1}\{E_{it} \} - h(t) \right| \leq \frac{ b t^{1-\gamma_1 + 2\gamma_2 }}{2}   \right\}  .
            \end{aligned}
        \end{equation}
        We have by (\ref{actuallyhoef}) that,
        \begin{equation}
        \label{eventshighenough}
        \begin{aligned}
            \mathbb{P}(F) &\geq 1 -\exp\left(-\frac{b^2 t^{1-3\gamma_1 + 4 \gamma_2}}{288 (\lambda_c + \varepsilon)^2} \right).
        \end{aligned}
        \end{equation}
        Note that $1- 3 \gamma_1 + 4 \gamma_2 > 0$. Thus we have by (\ref{eventshighenough}) and (\ref{petbound}) that,
        \begin{equation}
        \label{probabilityforlargedeviation}
            \mathbb{P}(E_t \cap F) \geq 1 -\exp\left(-\frac{b^2 t^{1-3\gamma_1 + 4 \gamma_2}}{288 (\lambda_c + \varepsilon)^2} \right) - c_{15} \exp\left(-c_{16}t^{ \frac{\gamma_2 \delta }{8}} \right).
        \end{equation}
        We also have by (\ref{rightmean3}) for $t > 0$ sufficiently large on the event $E_t \cap F$,
        \begin{equation}
        \label{boundthejunk}
             \left| \mathcal{R}(\tilde{\eta}_{M_1}) +  \mathcal{R}(\eta^{M_{H(t)}}_{t - M_{N(t)}}) - \sum_{i=H(t)+1}^{t^{1-\gamma_1}}  \mathcal{R}(\eta^{M_{i-1}}_{M_i 
 - M_{i-1}} )\right| \leq 48( \lambda_c + \varepsilon) t^{1-\gamma_1 + \gamma_2}  \leq \frac{b t^{1-\gamma_1 + 2\gamma_2 }}{4}.
        \end{equation}
        We know by (\ref{meanoff}) that $h(t) = \alpha t + \mathcal{O}(t^{1-\gamma_1 + \gamma_2})$. Thus for $t$ large enough we can conclude,
        \begin{equation}
            \label{meanclosetob}
            \left|h(t) - \alpha t \right| \leq \frac{bt^{1-\gamma_1 + 2 \gamma_2}}{4}.
        \end{equation}
       Using that $E_t \subseteq E_{it}$ for $2 \leq i \leq t^{1-\gamma_1}$, the decomposition of $\mathcal{R}(\tilde{\eta}_t)$ in (\ref{rightmean1}), (\ref{boundthejunk}), (\ref{meanclosetob}), and the triangle inequality we have shown that on the event $E_t \cap F $ for $t$ sufficiently large,
       \begin{equation}
           \left| \mathcal{R}(\tilde{\eta}_t) - \alpha t \right| \leq b t^{1 - \gamma_1 + 2\gamma_2 } = bt^{1-\gamma}.
       \end{equation}
       Using this and (\ref{probabilityforlargedeviation}) we have completed the theorem.
       \end{proof}
    \subsection{Strong Law of Large Numbers}
    We will now form the strong law of large numbers for $\dfrac{\mathcal{R}(\eta_t)}{t}$ when $\eta_0$ is set to be in the half-space $\Sigma^{\ominus}$. \\
    \begin{proof}[Proof of Theorem 4]
            We will first show the strong law of large numbers for $\tilde{\eta}_t$, the right edge modified process sampled from the invariant measure from the right edge $\tilde{\mu}$, and then generalize the results to any $\eta_0 \in \Sigma^{\ominus}$. We also will show that $\alpha \geq \varepsilon$ in section 6.4.   
            
            By Theorem \ref{mainthmrttail} there exists constants $c, \ c', \  a>0$ so that,
            \begin{equation}
                \mathbb{P}\left( | \mathcal{R}(\tilde{\eta}_t) - \alpha t | > t^{\frac{3}{4}}\right) \leq c \exp \left( -c' t^a \right).
            \end{equation}
            Thus by the Borel Cantelli lemma we know,
            \begin{equation}
            \label{lln1}
                \mathbb{P}\left( |\mathcal{R}(\tilde{\eta}_n) - \alpha n| >  t^{\frac{3}{4}} \text{ for } n \in \mathbb{N}\ \text{i.o.}\right) = 0. 
            \end{equation}
        Using that $\mathcal{R}(\tilde{\eta}_t)$ has stationary increments, we can apply Lemma \ref{largerdeviationincrement} and the Borel Canteilli lemma to show,
        \begin{equation}
        \label{lln2}
            \mathbb{P}\left( \sup_{0 \leq s \leq 1} \left|\mathcal{R}(\tilde{\eta}_{n+s}) - \mathcal{R}(\tilde{\eta}_n) \right| > n^{\frac{3}{4}} \text{ for } n \in \mathbb{N} \text{ i.o.}\right) = 0.
        \end{equation}
        And therefore combining (\ref{lln1}) and (\ref{lln2}) we have,
        \begin{equation}
            \mathbb{P}\left(  \left| \mathcal{R}(\tilde{\eta}_t) - \alpha t \right| < 2t^{\frac{3}{4}} \text{ eventually}\right) =1,
        \end{equation}
        which completes the law of large numbers when $\eta_0 \sim \tilde{\mu}$.

        Suppose now that $\{ \eta_t \}_{t \geq 0}$ is the right edge modified contact process so that $\eta_0 \in \Sigma^{\ominus}$.Let the random variable $T$ be defined as in section 4.3. When $T \leq t$ we can write,
        \begin{equation}
        \mathcal{R}(\eta_t) = \mathcal{R}(\eta_T) + \mathcal{R}(\eta^T_{t-T}).
        \end{equation}
        Using attractiveness from the right edge we know that $ \{ \mathcal{R}(\eta^T_{s-T}) \}_{s \geq T}$ has the law of $ \{ \mathcal{R}(\tilde{\eta}_s) \}_{t \geq 0}$ conditioned on the event that the origin yields an infinite $\lambda_e$ open path beginning at time $0$. Since this event has probability $q'' > 0$, and both $T< \infty$ and $| \mathcal{R}(\eta_T)|< \infty$ almost surely, we have completed the theorem.
    \end{proof}
    \subsection{Lower Bound to Edge Speeds}
    We will now show that when $\lambda_i = \lambda_c$, and $\lambda_e = \lambda_c + \varepsilon$, then the edge speed $\alpha \geq \varepsilon$. Let $\bar{\eta}_t$ be a copy of the right edge modified contact process with $(-\infty, 0]$ initially infected, and $\bar{\zeta}_t$ be the critical contact process with $(-\infty, 0]$ with $\bar{\eta}_t$ and $\bar{\zeta}_t$ defined under the same graphical representation. By Theorem \ref{mainthmlln} we have as $t \rightarrow \infty$,
    \begin{equation}
    \label{etabarconvergeas}
        \frac{\mathcal{R}(\bar{\eta}_t)}{t} \overset{a.s}{\rightarrow} \alpha.
    \end{equation}
    We also note that the collection, $\left\{ \dfrac{\mathcal{R}(\bar{\eta}_t)}{t} \right\}_{t \geq 1}$ is uniformly integrable. To see this we note that $\mathcal{R}(\bar{\eta}_t)^+$ is dominated by a rate $\lambda_c + \varepsilon$ Poisson process. We also see that $\dfrac{\mathcal{R}(\bar{\eta}_t)^-}{t}$ is dominated by $\dfrac{\mathcal{R}(\bar{\zeta}_t)^-}{t}$, which we can show is uniformly integrable for $t \geq 1$ by Lemma \ref{boxcrossrt}. And hence we can conclude that as $t \rightarrow \infty$,
    \begin{equation}
        \label{speedl1convergence}
        \mathbb{E}\left( \dfrac{\mathcal{R}(\bar{\eta}_t)}{t} \right) \rightarrow \alpha .
    \end{equation} 
    Using Lemma \ref{boxcrossrt} again we can conclude that for $t$ large,
    \begin{equation}
    \label{criticalcontactmean}
        \mathbb{E}\left( \mathcal{R}(\bar{\zeta}_t) \right) = \mathcal{O}(t^{1-\delta}).
    \end{equation}
    We will now give a proposition from \cite{liggett1985interacting},
    \begin{lemma}[Liggett]
    \label{liggettedgespeedsetup}
    For any set $A \subseteq \mathbb{Z}$, let $\zeta^A_t$ be a copy of the contact process with infection rate $\lambda$ and initial infected set $A$. Suppose $B \subset A$, where $A$ and $B$ are both in $ \Sigma^{\ominus}$, and let $C$ be any finite set. Then under the standard graphical representation of the contact process,
    \begin{equation}
        0 \leq \mathcal{R}(\zeta^{A \cup C}_t) - \mathcal{R}(\zeta^A_t) \leq \mathcal{R}(\zeta^{B \cup C}_t) -  \mathcal{R}(\zeta^B_t).
    \end{equation}
    In particular, for $B \subset (-\infty, -1]$,
    \begin{equation}
        \mathbb{E}\left( \mathcal{R}(\zeta^{B \cup \{0\}}_t) - \mathcal{R}(\zeta^{B}_t) \right) \geq 1.
    \end{equation}
    \end{lemma}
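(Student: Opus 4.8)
The plan is to reduce the lemma to the \textbf{additivity} of the contact process under the Harris graphical construction. Recall that, under the standard coupling, a site $x$ lies in $\zeta^S_t$ precisely when there is an active infection path from $S\times\{0\}$ to $(x,t)$; since any such path emanates from a single vertex of $S$, one has $\zeta^{S_1\cup S_2}_t=\zeta^{S_1}_t\cup\zeta^{S_2}_t$ for all $t$ and all $S_1,S_2\subseteq\mathbb{Z}$, and in particular $S_1\subseteq S_2$ forces $\zeta^{S_1}_t\subseteq\zeta^{S_2}_t$. Adopting the convention $\mathcal{R}(\emptyset)=-\infty$, additivity upgrades to $\mathcal{R}(\zeta^{S_1\cup S_2}_t)=\mathcal{R}(\zeta^{S_1}_t)\vee\mathcal{R}(\zeta^{S_2}_t)$. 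I would also record at the outset that for any infinite $S$, and hence for $A,B\in\Sigma^{\ominus}$, one has $\zeta^S_t\neq\emptyset$ and $\mathcal{R}(\zeta^S_t)\in(-\infty,\infty)$ almost surely: nonemptiness because the independent events ``the recovery clock at $y$ is silent on $[0,t]$'' each have probability $e^{-t}$ and there are infinitely many $y\in S$, and finiteness from above by the usual Poisson domination of rightward spread.

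Given these facts the two displayed inequalities are short. The left-hand one, $0\le\mathcal{R}(\zeta^{A\cup C}_t)-\mathcal{R}(\zeta^A_t)$, is immediate from monotonicity applied to $A\subseteq A\cup C$. For the right-hand one I would write, using additivity,
\[
\mathcal{R}(\zeta^{A\cup C}_t)-\mathcal{R}(\zeta^A_t)=\big(\mathcal{R}(\zeta^A_t)\vee\mathcal{R}(\zeta^C_t)\big)-\mathcal{R}(\zeta^A_t)=\big(\mathcal{R}(\zeta^C_t)-\mathcal{R}(\zeta^A_t)\big)^+,
\]
together with the identical identity with $B$ in place of $A$; since $B\subseteq A$ gives $\mathcal{R}(\zeta^B_t)\le\mathcal{R}(\zeta^A_t)$ and $u\mapsto(\mathcal{R}(\zeta^C_t)-u)^+$ is nonincreasing, the right-hand inequality follows (when $\zeta^C_t=\emptyset$ both sides are $0$, so nothing is lost).

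For the ``in particular'' statement I would apply the right-hand inequality with $A=(-\infty,-1]$ and $C=\{0\}$, using $(-\infty,-1]\cup\{0\}=(-\infty,0]$, to get, for every $B\subset(-\infty,-1]$,
\[
\mathcal{R}(\zeta^{(-\infty,0]}_t)-\mathcal{R}(\zeta^{(-\infty,-1]}_t)\le\mathcal{R}(\zeta^{B\cup\{0\}}_t)-\mathcal{R}(\zeta^B_t).
\]
Both edges on the left are integrable---$\mathcal{R}(\zeta^{(-\infty,0]}_t)^+$ is dominated by a Poisson variable and $\mathcal{R}(\zeta^{(-\infty,0]}_t)^-$ has finite mean by Lemma \ref{boxcrossrt}---and by translation invariance of the graphical construction $\mathcal{R}(\zeta^{(-\infty,-1]}_t)$ is distributed as $\mathcal{R}(\zeta^{(-\infty,0]}_t)-1$, so the expectation of the left-hand side equals $1$. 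Taking expectations in the display then yields $\mathbb{E}\big(\mathcal{R}(\zeta^{B\cup\{0\}}_t)-\mathcal{R}(\zeta^B_t)\big)\ge 1$. There is no serious obstacle in this argument; the only points needing a little care are the almost-sure finiteness of the various edge functionals and the integrability required to subtract expectations, both of which are standard for the critical one-dimensional process via Lemma \ref{boxcrossrt} and Poisson bounds.
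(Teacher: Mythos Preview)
The paper does not supply its own proof of this lemma; it is quoted as a known proposition from Liggett's book \cite{liggett1985interacting} and used as a black box in Section~6.4. Your argument via additivity of the Harris graphical construction, namely $\mathcal{R}(\zeta^{S_1\cup S_2}_t)=\mathcal{R}(\zeta^{S_1}_t)\vee\mathcal{R}(\zeta^{S_2}_t)$ and hence $\mathcal{R}(\zeta^{A\cup C}_t)-\mathcal{R}(\zeta^A_t)=(\mathcal{R}(\zeta^C_t)-\mathcal{R}(\zeta^A_t))^+$, is correct and is the standard route to this inequality.

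One cosmetic remark: your appeal to Lemma~\ref{boxcrossrt} for the integrability of $\mathcal{R}(\zeta^{(-\infty,0]}_t)^-$ is specific to the critical rate, whereas the lemma is stated for arbitrary $\lambda$. A rate-free bound is available: if $\mathcal{R}(\zeta^{(-\infty,0]}_t)<-k$ then each of the initially infected sites $0,-1,\dots,-k$ must have received at least one recovery mark in $[0,t]$, so $\mathbb{P}\big(\mathcal{R}(\zeta^{(-\infty,0]}_t)<-k\big)\le(1-e^{-t})^{k+1}$, which gives exponential tails and all moments for every $\lambda$. In the paper only the critical case is invoked, so your version suffices there.
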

     For $i \in \mathbb{N}$ we let $\bar{\eta}_{i,t}$ be a copy of the right edge modified contact process that takes edge boosts from the clock $N_{e2}(t)$ up until time $X_i = \inf \{ t \geq 0 \ | \ N(t) \geq i \}$, and after runs via the instructions of the critical contact process. Recall $\mathcal{F}_t$ corresponds to the filtration of the sitwise construction of the border modified contact process up until time $t \geq 0$. For notation let $\zeta^C_t$ correspond to the critical contact process with initial infected region $C$, and let $C_i$ be the infected regin of $\bar{\eta}_{i,X_i}$, and $C_i'$ be the infected region of $\bar{\eta}_{i-1,t}$. Using the strong Markov property we can write,
     \begin{equation}
     \label{edgespeeedsetup1}
     \begin{aligned}
         & \mathbb{E}\left( \mathcal{R}(\bar{\eta}_{i,t}) - \mathcal{R}(\bar{\eta}_{i-1,t}) \right) \\
        &= \mathbb{E}\left( \mathbbm{1} \{ X_i \leq t \} \mathbb{E} \left( \mathcal{R}(\bar{\eta}_{i,t})  - \mathcal{R}(\bar{\eta}_{i-1,t}) \mid \mathcal{F}_{X_i} \right) \right) \\
        &= \mathbb{E}\left( \mathbbm{1} \{ X_i \leq t \} \mathbb{E} \left( \mathcal{R}(\zeta^{C_i}_{t-X_i}) - \mathcal{R}(\zeta^{C_i'}_{t-X_i}) \right) \right) 
        \end{aligned}
        \end{equation}
        Note that $C_i' \subset C_i$ and $C_i \backslash C_i' = \{ \mathcal{R}(\bar{\eta}_{X_i} \}$, and for all $s \geq X_i$, both $\bar{\eta}_{i,s}$ and $\bar{\eta}_{i-1,s}$ will evolve via the critical contact process. Using this, Lemma \ref{liggettedgespeedsetup} and integrating over the random set $C_i$ we have,
        \begin{equation}
        \label{edgespeedsetup1point5}
        \begin{aligned}
        \mathbb{E}\left( \mathbbm{1} \{ X_i \leq t \} \mathbb{E} \left( \mathcal{R}(\zeta^{C_i}_{t-X_i}) - \mathcal{R}(\zeta^{C_i'}_{t-X_i}) \right) \right) 
        &\geq \mathbb{E}\left( \mathbbm{1}\{ X_i \leq t \} (1) \right) \\
        &= \mathbb{P}\left(N_{e2}(t) \geq i \right).
     \end{aligned}    
     \end{equation}
    Using (\ref{poistailbound}), (\ref{criticalcontactmean}), (\ref{edgespeeedsetup1}), (\ref{edgespeedsetup1point5}), and that $N_{e2}(t) \sim \text{Pois}\left( (\lambda_c + \varepsilon )t \right)$, for $t$ large we have,
    \begin{equation}
    \label{edgespeeedsetup2}
    \begin{aligned}
        & \mathbb{E}\left( \mathcal{R}(\bar{\eta}_{t}) \right) \\
        &= \mathbb{E}\left( \mathcal{R}(\bar{\zeta}_t) + \left(\mathcal{R}(\bar{\eta}_t) - \mathcal{R}(\bar{\eta}_{2 \varepsilon t, t}) \right) +   \sum_{i=1}^{2 \varepsilon t} \mathcal{R}(\bar{\eta}_{i,t}) - \mathcal{R}(\bar{\eta}_{i-1,t})\right) \\
        &\geq \mathcal{O}(t^{1-\delta}) + 0 + \sum_{i=1}^{2 \varepsilon t} \mathbb{P}\left( N_{e2}(t) \geq i \right) \\
        &\geq \varepsilon t + \mathcal{O}(t^{1-\delta}).
    \end{aligned}
    \end{equation}
    And hence by (\ref{speedl1convergence}), (\ref{edgespeeedsetup2}) and limiting $t \rightarrow \infty$, we can conclude $\alpha \geq \varepsilon$.

\section{Brownian Motion Approximation}
\subsection{Setup}
We will now show a functional central limit theorem for the right edge process $\mathcal{R}(\eta_t)$ whenever $\eta_0$ is in the half space $\Sigma^{\ominus}$. We will first show the central limit theorem for $\mathcal{R}(\tilde{\eta}_t)$, the process initially sampled from the invariant measure from the right edge $\tilde{\mu}$, and after extend the results to the case whenever $\eta_0$ is defined in the half space $\Sigma^{\ominus}$

We will apply a classical central limit theorem for $\alpha$-mixing sequences first found by Ibragimov and Davydov in \cite{ibragimov1975note} and \cite{davydov1970invariance} based on $\alpha$-mixing sequences of random variables. We now give a definition for the $\alpha$-mixing or strong mixing condition. Let $\{ X_i \}_{i \in \mathbb{N}}$ be a sequence of random variables. For $k \geq 1$ let $\mathcal{G}_k = \sigma(\{X_i\}_{i=1}^k)$ and $\mathcal{G}^k = \sigma(\{X_i\}_{i=k}^\infty)$.
\begin{definition}
     We say the sequence $\{ X_i \}_{i=1}^\infty$ is \textbf{$\alpha$-mixing} with mixing coefficient $g(n)$ if the sequence satisfies for each $n \geq 1$,
 \begin{equation} 
 \sup_{k} \left\{  \left| \mathbb{P}\left(  A \cap B \right) - \mathbb{P}(A)\mathbb{P}(B) \right| \mid A \in \mathcal{G}_k, \ B \in \mathcal{G}^{k+n} \right\} = g(n).
 \end{equation}
\end{definition}
\begin{theorem}[Ibragimov, Davydov]
    \label{functionalclt}
    Suppose that $\{X_i \}_{i \in \mathbb{N}}$ is a stationary and mean $0$ $\alpha$-mixing sequence of random variables with mixing coefficient $g(n)$. Suppose there exists a constant $a >0$ so that,
    \begin{equation}
        \mathbb{E}\left( X_1^{2 + a} \right) < \infty, \text{ and } \sum_{n=0}^\infty g(n)^{\frac{2}{2(2+a)}} < \infty.
    \end{equation}
    Then the process,
    \begin{equation}
        \hat{S}_n(t) = \frac{1}{\sqrt{n}}\sum_{i=1}^{\lfloor nt \rfloor} X_i + \frac{1}{\sqrt{n}}\left( nt - \lfloor nt \rfloor \right)X_{\lceil nt \rceil}
    \end{equation}
    satisfies as $n \rightarrow \infty$,
    \begin{equation}
        \{\hat{S}_n(t)\}_{t \geq 0} \Rightarrow \{W(t)\}_{t \geq 0}.
    \end{equation}
    Where $W_t$ is Brownian motion with diffusion coefficient $\sigma^2 \geq 0$, and $\Rightarrow$ denotes convergence in distribution.
\end{theorem}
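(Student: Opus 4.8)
The plan is the standard two–step route to an invariance principle: establish convergence of the finite–dimensional distributions of $\hat S_n$, and then establish tightness of $\{\hat S_n(\cdot)\}_n$ in $C[0,\infty)$ (each $\hat S_n$ is piecewise linear, hence continuous, so one may work in the uniform–on–compacts topology). Since $\hat S_n(t)-\hat S_n(s)$ is, up to the linear interpolation correction at the two endpoints, the sum of the stationary sequence over a block of $\lfloor nt\rfloor-\lfloor ns\rfloor$ consecutive indices, the Cram\'er--Wold device together with stationarity reduces finite–dimensional convergence to the single assertion
\begin{equation}
\frac{1}{\sqrt n}\,S_n \;\Rightarrow\; N(0,\sigma^2), \qquad S_n:=\sum_{i=1}^n X_i,\qquad \sigma^2:=\Var(X_1)+2\sum_{k=1}^\infty \Cov(X_1,X_{1+k}).
\end{equation}
The series defining $\sigma^2$ converges absolutely: the Davydov covariance inequality gives $|\Cov(X_1,X_{1+k})|\le C\|X_1\|_{2+a}^2\,g(k)^{a/(2+a)}$, whose summability in $k$ is precisely what the hypothesis on $\sum_n g(n)$, together with $\mathbb{E}(X_1^{2+a})<\infty$, supplies. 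Only $\sigma^2\ge 0$ is produced here; strict positivity is a separate input, used when the theorem is applied in Theorem \ref{mainthmbrownianmotion}.

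For the one–dimensional central limit theorem I would use the Bernstein big–block/small–block decomposition. Choose block lengths $p=p_n\to\infty$ and $q=q_n\to\infty$ with $q_n/p_n\to 0$, $p_n/n\to 0$, and $n\,g(q_n)/p_n\to 0$ (possible since $g(n)\to 0$), split $\{1,\dots,n\}$ into consecutive big blocks of length $p$ separated by small blocks of length $q$, and write $S_n=\sum_{j=1}^{r_n}U_j+\sum_{j=1}^{r_n}V_j+(\text{end term})$ with $r_n\sim n/(p+q)$ and $U_j,V_j$ the big– and small–block sums. Stationarity and the covariance bound give $\Var(\sum_j V_j)=o(n)$, so the small blocks are $L^2$–negligible after dividing by $\sqrt n$. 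For the big blocks the Volkonskii--Rozanov inequality bounds the gap between $\mathbb{E}\exp(i\theta n^{-1/2}\sum_j U_j)$ and $\prod_j\mathbb{E}\exp(i\theta n^{-1/2}U_j)$ by $C\,r_n g(q_n)\to 0$, so it suffices to prove a CLT for a triangular array of \emph{independent} copies of the $U_j$. That follows from the Lindeberg--Feller theorem: $\Var(\sum_j U_j)/n\to\sigma^2$ (using $q_n/p_n\to 0$, so the big blocks asymptotically exhaust $\{1,\dots,n\}$), and a Rosenthal–type moment inequality for $\alpha$–mixing partial sums, valid under the stated moment and mixing–rate hypotheses, gives $\mathbb{E}|U_1|^{2+a}=O(p^{1+a/2})$, which forces uniform integrability of $\{U_j^2/p\}$ and hence the Lindeberg condition.

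For tightness I would use the same $\alpha$–mixing moment inequality in the form $\mathbb{E}|S_m-S_\ell|^{2+a}\le C\,(m-\ell)^{1+a/2}$ for all $\ell\le m$. Transferred to the interpolated process this yields $\mathbb{E}|\hat S_n(t)-\hat S_n(s)|^{2+a}\le C'\,|t-s|^{1+a/2}$ whenever $|t-s|\ge 1/n$, while on a single sub–interval of length $\le 1/n$ the increment is $O(n^{-1/2}|X_{\lceil nt\rceil}|)$ and is controlled directly by the moment bound on one term. Since $2+a>2$ and the exponent $1+a/2$ exceeds $1$, Billingsley's moment criterion gives tightness in $C[0,T]$ for every $T$. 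Combining this with the finite–dimensional convergence and identifying the limit covariance as $\lim_n\Cov(\hat S_n(s),\hat S_n(t))=\sigma^2(s\wedge t)$ yields $\hat S_n\Rightarrow W$, Brownian motion with diffusion coefficient $\sigma^2$.

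The delicate point is the calibration of the block lengths in the one–dimensional CLT: the pair $(p_n,q_n)$ must \emph{simultaneously} kill the small–block variance, kill the Volkonskii--Rozanov decoupling error $r_n g(q_n)$, and leave enough margin for the Lindeberg condition on the big blocks, and it is here that the quantitative decay encoded in the summability hypothesis on $g(n)$ is consumed. The Rosenthal–type moment inequality for $\alpha$–mixing sums — used both for the Lindeberg step and for tightness — is the other technical ingredient, but it is classical under the present moment and mixing–rate assumptions.
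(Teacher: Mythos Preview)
The paper does not prove this theorem; it is quoted as a classical result of Ibragimov and Davydov (cited from \cite{ibragimov1975note} and \cite{davydov1970invariance}) and invoked as a black box in the proof of Lemma~\ref{cltbaselemma}. Your sketch---Bernstein big-block/small-block decomposition, Volkonskii--Rozanov decoupling, Lindeberg--Feller for the big blocks, and a Kolmogorov-type moment criterion for tightness---is exactly the classical route followed in those references, so there is nothing in the paper to compare against.
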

In order to form the central limit theorem, we will first apply Theorem \ref{functionalclt} to the increments of $\mathcal{R}(\tilde{\eta}_t)$ at integer time points. Since $\tilde{\eta}_0$ is sampled via $\tilde{\mu}$, we then will apply Lemma \ref{largerdeviationincrement} to extend the convergence from integer to all positive real time points. We will also apply the measurable partition argument used in \cite{galves1987edge} and \cite{durrett1989contact} to show the drift coefficient $\sigma^2$ in the Brownian motion approximation is positive. 
\subsection{Showing Increments of $\mathcal{R}(\eta_t)$ are $\alpha$-Mixing}
We will first bound the $\alpha$-mixing rate of the increments of $\mathcal{R}(\tilde{\eta}_t)$ when $\tilde{\eta_0}$ is sampled via $\tilde{\mu}$. For $n \in \mathbb{N}$ we will let, $\Delta_n = \mathcal{R}(\tilde{\eta}_n) - \mathcal{R}(\tilde{\eta}_{n-1})$. We will now bound the mixing rate for $\{ \Delta_n \}_{n \in \mathbb{N}}$ using a similar approach to Galves \& Presutti in \cite{galves1987edge}.
\begin{lemma}
    \label{showalphamixing}
    Let $\tilde{\eta}_t$ be the right edge modified contact process with $\tilde{\eta}_0$ sampled via $\tilde{\mu}$. Then the collection of variables $\{ \Delta_n \}_{n \in \mathbb{N}}$ is $\alpha$ mixing with mixing coefficient,
    \begin{equation}
        g(n) \leq c_{17} \exp\left( - c_{12} n^{\frac{\delta}{8}} \right),
    \end{equation}
    for the universal constants $c_{17}$ and $c_{12}$ defined in (\ref{tgreaterthann}).
    \begin{proof}
        For $n \in \mathbb{N}$ let $\mathcal{G}_n = \sigma\left( \{ \Delta_i \}_{i=1}^n \right)$, and $\mathcal{G}^n = \sigma\left( \{ \Delta_i \}_{i=n}^\infty \right)$. Since $\tilde{\eta}_t$ is stationary, it suffices to consider times $t < 0$, and bounding the dependencies between $\mathcal{G}^n$ and $\mathcal{G}^{-1}_m = \sigma \left( \{ \Delta_i \}_{i=-m}^{-1} \right)$.

        We will define the random variables $T, I, \{\tau_i\}_{i=1}^I, \{\tau^{\emptyset,i}\}_{i=1}^I, \{ \{\eta^i_s\}_{s \geq 0 } \}_{i=1}^I$ as in section 4.3. We see that on the event $\{T < n-1\}$ that we can write using attractiveness from the right edge for $t \geq n-1$
        \begin{equation}
        \label{attractivenessmixing1}
        \mathcal{R}(\tilde{\eta}_t) = \mathcal{R}(\tilde{\eta}_T) + \mathcal{R}(\eta^T_{t-T}
        ),
        \end{equation}
        Where $\mathcal{R}(\eta^I_t)$ has the law of the right edge modified contact process with the origin initially infected conditioned on the event $\tau^\emptyset = \infty$. Since the law of $T$ is independent of $\tilde{\eta}_0$, we have by (\ref{attractivenessmixing1}) on the event $\{ T < n-1 \}$ we can conclude $\Delta_i$ is a measurable function of $\{ \mathcal{R}(\eta^T_s )\}_{s \geq 0 }$ and independent of $\mathcal{G}_{-m}^{-1}$ for all $i \geq n$. We have by (\ref{tgreaterthann}),
        \begin{equation}
            \mathbb{P}(T < n-1) \leq c_{11} \exp\left( - c_{12} (n-1)^{\frac{\delta}{8}} \right).
        \end{equation}
        And hence if we choose any $A \in \mathcal{G}_{-m}^{-1}$ and $B \in \mathcal{G}^n$ using independence between $\{ \Delta_i \}_{i=n}^\infty$ to $\mathcal{G}_{-m}^{-1}$ when $\{ T < n-1 \}$ we have,
        \begin{equation}
        \begin{aligned}
            & \left|\mathbb{P}(A \cap B |) - \mathbb{P}(A)\mathbb{P}(B) \right| \\
            & \leq \mathbb{P}(T < n-1) \\
            & \leq c_{11} \exp\left( - c_{12} (n-1)^{\frac{\delta}{8}} \right).
        \end{aligned}
        \end{equation}
        Using that $\mathcal{R}(\tilde{\eta}_t)$ has stationary increments and $m \in \mathbb{N}$ was chosen arbitrarily we can conclude,
        \begin{equation}
            g(n) \leq c_{11} \exp\left(-c_{12} (n-1)^{\frac{\delta}{8}} \right).
        \end{equation}
        Which then completes the lemma.
    \end{proof}
    \subsection{Central Limit Theorem}
\end{lemma}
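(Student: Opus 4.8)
The plan is to obtain the mixing bound by a regeneration argument in the spirit of Galves and Presutti \cite{galves1987edge}: the restart time $T$ built in the recursive construction preceding (\ref{tgreaterthann}) will serve as the mechanism that decouples the far future of the increment sequence from its past, and its stretched-exponential tail will transfer essentially unchanged to $g(n)$. Since $\tilde\mu$ is invariant, $\{\Delta_i\}$ is stationary and may be represented on a two-sided graphical construction; by stationarity it then suffices, for every $m\in\mathbb{N}$, to bound $|\mathbb{P}(A\cap B)-\mathbb{P}(A)\mathbb{P}(B)|$ uniformly over $A\in\mathcal{G}^{-1}_m:=\sigma(\Delta_{-m},\dots,\Delta_{-1})$ and $B\in\mathcal{G}^{n}:=\sigma(\{\Delta_i\}_{i\ge n})$, and then $g(n)$ is the supremum of these quantities. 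The first observation I would record is that $\mathcal{G}^{-1}_m$ is generated by the graphical clocks at times $<0$: each $\Delta_i$ with $-m\le i\le-1$ is a function of $\mathcal{R}(\tilde\eta_t)$ for $t\in[-m-1,-1]$, and in the stationary regime each such $\tilde\eta_t$ is measurable with respect to the clocks at times $\le-1$.

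Next I would run the recursive construction preceding (\ref{tgreaterthann}) from time $0$, producing the restart time $T$ together with the auxiliary processes $\eta^{T_i}_s$. Two properties are needed. First, by (\ref{tgreaterthann}) one has $\mathbb{P}(T>k)\le c_{11}\exp(-c_{12}k^{\delta/8})$ for all $k>0$. Second, the entire construction — the times $T_i$, the index $I$, the processes $\eta^{T_i}_s$, and hence $T$ and the trajectory $\{\mathcal{R}(\eta^T_s)\}_{s\ge0}$ — is a measurable function of the graphical clocks at times $\ge0$ alone: by Definition \ref{rightedgeauxillary} every auxiliary block is launched from the single site $\{0\}$ and never consults $\tilde\eta_0$. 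In particular the event $\{T<n-1\}$ is measurable with respect to the clocks at nonnegative times (even though $T$ itself is not an $\mathcal{F}_t$-stopping time, since its definition inspects whether the final block survives forever), so that both $\{T<n-1\}$ and $\{\mathcal{R}(\eta^T_s)\}_{s\ge0}$ are independent of $\mathcal{G}^{-1}_m$, because the Poisson increments over the disjoint intervals $(-\infty,0)$ and $[0,\infty)$ are independent.

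Now I would combine this with attractiveness at the right edge. On $\{T<n-1\}$ one has $\mathcal{R}(\tilde\eta_t)=\mathcal{R}(\tilde\eta_T)+\mathcal{R}(\eta^T_{t-T})$ for all $t\ge T$, so for each $i\ge n$ the increment $\Delta_i=\mathcal{R}(\eta^T_{i-T})-\mathcal{R}(\eta^T_{i-1-T})$ is a fixed measurable function of $\{\mathcal{R}(\eta^T_s)\}_{s\ge0}$, the absolute location $\mathcal{R}(\tilde\eta_T)$ dropping out. Hence for $B\in\mathcal{G}^{n}$ the random variable $\mathbbm{1}_B\,\mathbbm{1}_{\{T<n-1\}}$ is measurable with respect to the clocks at nonnegative times, and thus independent of $A\in\mathcal{G}^{-1}_m$. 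Splitting on $\{T<n-1\}$ and its complement, the contributions from $\{T<n-1\}$ cancel by this independence, leaving
\begin{equation}
|\mathbb{P}(A\cap B)-\mathbb{P}(A)\mathbb{P}(B)|=\left|\mathbb{P}(A\cap B\cap\{T\ge n-1\})-\mathbb{P}(A)\mathbb{P}(B\cap\{T\ge n-1\})\right|\le\mathbb{P}(T\ge n-1).
\end{equation}
Taking the supremum over $m$, $A$, $B$ and invoking (\ref{tgreaterthann}) with $k=n-1$ gives $g(n)\le c_{11}\exp(-c_{12}(n-1)^{\delta/8})$, and after adjusting the constants (to absorb the shift $n-1\mapsto n$ and the finitely many small values of $n$) this is of the claimed form $g(n)\le c_{17}\exp(-c_{12}n^{\delta/8})$.

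The step I expect to need the most care is the second property above: verifying that the restart construction, although it is built around the non-stopping time $T$, truly depends only on the graphical clocks at nonnegative times and is wholly independent of $\tilde\eta_0$ and of the negative-time increment field $\mathcal{G}^{-1}_m$, so that conditioning on $\{T<n-1\}$ genuinely turns the future event $B$ into a function of an independent source. Everything else is routine bookkeeping with the triangle inequality, the independence of Poisson increments over disjoint time intervals, and the tail estimate (\ref{tgreaterthann}).
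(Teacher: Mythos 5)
Your proposal is correct and follows essentially the same route as the paper: a regeneration argument using the restart time $T$ from the recursive construction preceding (\ref{tgreaterthann}), attractiveness at the right edge to write $\Delta_i$ for $i\ge n$ as a function of $\{\mathcal{R}(\eta^T_s)\}_{s\ge 0}$ on $\{T<n-1\}$, and the splitting $|\mathbb{P}(A\cap B)-\mathbb{P}(A)\mathbb{P}(B)|\le\mathbb{P}(T\ge n-1)$. Your version is in fact slightly cleaner, since you write the bounding event correctly as $\{T\ge n-1\}$ (the paper's displays say $\mathbb{P}(T<n-1)$, an evident typo) and you make the cancellation on $\{T<n-1\}$ explicit.
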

We are now ready to prove the following lemma,
\begin{lemma}
    \label{cltbaselemma}    
    Let $\{ \eta_t \}_{t \geq 0}$ be the right edge modified contact process with $\eta_0$ sampled via $\tilde{\mu}$. Then as $n \rightarrow \infty$ we have,
    \begin{equation}
        \{\frac{1}{\sqrt{n}}\left( \mathcal{R}(\eta_{n t}) - \alpha n t\right) \}_{t \geq 0} \Rightarrow \{ W_t \}_{t \geq 0}.
    \end{equation}
    Where $W_t$ is Brownian motion with a drift coefficient $\sigma^2 \geq 0$.
    \begin{proof}
        Based on Theorem \ref{functionalclt}, Lemma \ref{largerdeviationincrement}, and Lemma \ref{showalphamixing} we can conclude that the process,
        \begin{equation}
            \hat{G}_n(t) = \frac{1}{\sqrt{n}}\left( \mathcal{R}(\tilde{\eta}_{\lfloor nt \rfloor}) - \alpha \lfloor nt \rfloor \right) + \frac{1}{\sqrt{n}}(nt - \lfloor nt \rfloor)\left( \mathcal{R}(\tilde{\eta} _{\lceil nt \rceil} ) - \mathcal{R}(\tilde{\eta}_{\lfloor nt \rfloor}) - \alpha  )\right),
        \end{equation}
        converges weakly to Brownian motion $W_t$ with a drift coefficient $\sigma^2 \geq 0$ as $n \rightarrow \infty$. Let,
        \begin{equation}
            G_n(t) = \frac{1}{\sqrt{n}}\left( \mathcal{R}(\tilde{\eta}_{nt}) - \alpha n t \right).
        \end{equation}
        We define the event,
        \begin{equation}
            A_n = \left\{ \sup_{i \in \mathbb{Z}, 0 \leq i \leq n^2} \sup_{0 \leq s \leq 1} \left|\mathcal{R}(\eta_{i+s}) - \mathcal{R}(\eta_i) \right| \leq n^{1/4} \right\},
        \end{equation}
        and note that on the event $A_n$,
        \begin{equation}
            \label{gncloseonan}
            \sup_{0 \leq t \leq n} \left| G_n(t) -\hat{G}_n(t)\right| \leq \frac{2}{n^{1/4}}.
        \end{equation}
        Since $\tilde{\mu}$ is a stationary measure, we can apply Lemma \ref{largerdeviationincrement} and a union bound to obtain,
        \begin{equation}
            \label{anlikely}
            \mathbb{P}(A_n^c) \leq n^2 c_{14} \exp\left(-c_{12}\left( \frac{n^{1/4}-4( \lambda_c +  \varepsilon)}{4(\lambda_c + \varepsilon)} \right)^{\frac{\delta}{8}} \right).
        \end{equation}
        Hence by (\ref{gncloseonan}) and (\ref{anlikely}), we can also conclude that $\{G_n(t)\}_{t \geq 0}$ converges weakly to $\{W_t\}_{t \geq 0}$ as $n \rightarrow \infty$, which completes the lemma.
    \end{proof}
\end{lemma}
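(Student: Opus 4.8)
The plan is to obtain the functional central limit theorem in two stages: first establish the polygonal (discrete-time) invariance principle for the unit increments of $\mathcal{R}(\tilde{\eta}_\cdot)$ by invoking the Ibragimov--Davydov theorem (Theorem \ref{functionalclt}), and then upgrade this to the genuinely continuous-time diffusively rescaled process $G_n(t) = n^{-1/2}(\mathcal{R}(\tilde{\eta}_{nt}) - \alpha n t)$ using the increment tail bound of Lemma \ref{largerdeviationincrement}.

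For the first stage I would set $\Delta_n = \mathcal{R}(\tilde{\eta}_n) - \mathcal{R}(\tilde{\eta}_{n-1})$ and verify that $X_n := \Delta_n - \alpha$ meets the hypotheses of Theorem \ref{functionalclt}. Stationarity of $\{X_n\}_{n \ge 1}$ is immediate since $\tilde{\mu}$ is invariant for the process viewed from the right edge. The sequence is centered by Lemma \ref{rightmeanlemma}, which gives $\mathbb{E}(\Delta_n) = \mathbb{E}(\mathcal{R}(\tilde{\eta}_1)) = \alpha$. The moment condition $\mathbb{E}(X_1^{2+a}) < \infty$ holds for \emph{every} $a > 0$, because Lemma \ref{largerdeviationincrement} applied at $t = 1$ furnishes a stretched-exponential tail for $\sup_{0 \le s \le 1} |\mathcal{R}(\tilde{\eta}_s)|$, hence for $|\Delta_1|$. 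Finally, Lemma \ref{showalphamixing} gives the strong-mixing coefficient $g(n) \le c_{17}\exp(-c_{12} n^{\delta/8})$, which decays faster than any polynomial, so $\sum_n g(n)^{1/(2+a)} < \infty$ for every $a > 0$. Theorem \ref{functionalclt} then produces weak convergence, in the space of continuous paths, of the polygonal process $\hat{G}_n$ assembled from the partial sums of the $X_i$ to a Brownian motion $W$ with some coefficient $\sigma^2 \ge 0$.

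For the second stage I would compare $G_n$ with $\hat{G}_n$ on the event $A_n$ that no integer-anchored unit block of $\mathcal{R}(\tilde{\eta}_\cdot)$ over $[0, n^2]$ oscillates by more than $n^{1/4}$; on $A_n$ the difference between following the true trajectory and interpolating linearly between consecutive integers is at most $2 n^{1/4}$, so $\sup_{0 \le t \le n} |G_n(t) - \hat{G}_n(t)| \le 2 n^{-1/4}$. Stationarity of the increments of $\mathcal{R}(\tilde{\eta}_\cdot)$, Lemma \ref{largerdeviationincrement}, and a union bound over the $O(n^2)$ blocks give $\mathbb{P}(A_n^c) \to 0$. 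A converging-together argument in the path space then transfers the weak limit from $\hat{G}_n$ to $G_n$, which is exactly the assertion of the lemma.

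The step I expect to be the main obstacle is the discrete-to-continuous passage: making precise that the within-block fluctuations of $\mathcal{R}(\tilde{\eta}_\cdot)$ are negligible after the diffusive rescaling, uniformly over compact time windows, and phrasing the conclusion as convergence in $C[0,\infty)$ rather than mere convergence of finite-dimensional distributions. This is precisely where the stretched-exponential increment estimate of Lemma \ref{largerdeviationincrement} is indispensable, as it supplies the Borel--Cantelli-type control of the oscillation events needed to make $\mathbb{P}(A_n^c)$ summable-fast-decaying. By contrast, checking the moment/mixing tradeoff in Theorem \ref{functionalclt} is routine here, since both the mixing rate and the increment tails are stretched-exponential and any exponent $a > 0$ suffices.
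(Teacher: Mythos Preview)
Your proposal is correct and follows essentially the same approach as the paper: first apply the Ibragimov--Davydov theorem to the stationary, centered, $\alpha$-mixing increments $\Delta_n - \alpha$ (with the moment and mixing hypotheses supplied by Lemmas \ref{largerdeviationincrement} and \ref{showalphamixing}) to obtain the polygonal invariance principle for $\hat{G}_n$, and then pass to the continuous-time process $G_n$ via the same event $A_n$, the same uniform bound $\sup_{0\le t\le n}|G_n(t)-\hat{G}_n(t)|\le 2n^{-1/4}$ on $A_n$, and the same union bound showing $\mathbb{P}(A_n^c)\to 0$. Your write-up is in fact slightly more explicit than the paper's in spelling out why the hypotheses of Theorem \ref{functionalclt} are met.
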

\subsection{Drift Coefficient}
We will now show that the drift coefficient $\sigma^2$ in the Brownian motion approximation is positive. We will give an identical argument to the ones used in \cite{galves1987edge} and \cite{durrett1989contact}. We will show the following lemma.
\begin{lemma}
    \label{lemmadriftcoefficient}
    Let $\eta_t$ be the right edge modified contact process with infection rates $\lambda_i = \lambda_c$, $\lambda_e = \lambda_c + \varepsilon$, where $\varepsilon > 0$. Then we have,
    \begin{equation}
        \inf_{\eta_0 \in \Sigma^{\ominus}} \liminf_t \frac{\text{Var} \left( \mathcal{R}(\eta_t) \right)}{t} > 0
    \end{equation}
\end{lemma}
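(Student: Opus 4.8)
The plan is to reduce the claim to the strict positivity of the Brownian diffusion coefficient $\sigma^2$ appearing in Lemma~\ref{cltbaselemma}, and then to prove $\sigma^2>0$ by a regeneration argument in the spirit of \cite{galves1987edge} and \cite{durrett1989contact}, using the stretched-exponential survival-time bound of Theorem~\ref{mainthmsurvivaltime} in place of the exponential bounds available in the supercritical case. For the reduction, fix $\eta_0\in\Sigma^{\ominus}$ and assume $\mathcal{R}(\eta_0)=0$. By Theorem~\ref{mainthmbrownianmotion}, $(\mathcal{R}(\eta_t)-\alpha t)/\sqrt{t}\Rightarrow N(0,\sigma^2)$. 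Writing $\mathcal{R}(\eta_t)=\mathcal{R}(\eta_T)+\mathcal{R}(\eta^T_{t-T})$ on $\{T\le t\}$ for the time $T$ of Section 4.3, the tails of $T$ and of $\mathcal{R}(\eta_T)$ (from Theorem~\ref{mainthmsurvivaltime} together with the Poisson and critical-contact-process excursion bounds already used there), combined with Lemma~\ref{largerdeviationincrement} applied to the surviving process $\eta^T$, show that $\{(\mathcal{R}(\eta_t)-\alpha t)^2/t\}_{t\ge 1}$ is uniformly integrable and $\mathbb{E}\,\mathcal{R}(\eta_t)=\alpha t+\mathcal{O}(1)$. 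Hence $\operatorname{Var}(\mathcal{R}(\eta_t))/t\to\sigma^2$ for \emph{every} $\eta_0\in\Sigma^{\ominus}$, with the same $\sigma^2$, so the infimum in the statement equals $\sigma^2$ and it remains to show $\sigma^2>0$.

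For this I would introduce regeneration (break) times $R_1<R_2<\cdots$ for $\mathcal{R}(\eta_t)$ of the Kuczek type used in \cite{galves1987edge}: $R_j$ is a time at which a $\lambda_e$-open path issuing from the current right edge survives for all time, so that by the attractiveness property at the right edge (Definition~\ref{rightedgeauxillary}) the post-$R_j$ increments of $\mathcal{R}(\eta_\cdot)$ are independent of $\mathcal{F}_{R_j}$ with a law not depending on $\eta_0$. Concretely these arise by iterating the epoch construction in the proof of Theorem~\ref{mainthmrttail}, now waiting for a path surviving for all time rather than for time $t^{\gamma_1}$; the extra cost is controlled by Theorem~\ref{mainthmsurvivaltime}, which (via the same Poisson / critical-contact-process estimates as in Lemma~\ref{largerdeviationincrement}) yields stretched-exponential tails, hence finite second moments, for $\tau_j:=R_{j+1}-R_j$ and for the edge displacements $V_j:=\mathcal{R}(\eta_{R_{j+1}})-\mathcal{R}(\eta_{R_j})$. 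The pairs $(\tau_j,V_j)_{j\ge1}$ are then i.i.d.\ with $V_j\in\mathbb{Z}$. Renewal--reward theory gives $\mathcal{R}(\eta_t)/t\to\mathbb{E}V_1/\mathbb{E}\tau_1$ a.s., which equals $\alpha$ by Theorem~\ref{mainthmlln}, so $\mathbb{E}V_1=\alpha\,\mathbb{E}\tau_1$; and the renewal--reward central limit theorem identifies the coefficient of Lemma~\ref{cltbaselemma} as $\sigma^2=\operatorname{Var}(V_1-\alpha\tau_1)/\mathbb{E}\tau_1$.

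Finally, suppose $\sigma^2=0$. Then $\operatorname{Var}(V_1-\alpha\tau_1)=0$, and since $\mathbb{E}(V_1-\alpha\tau_1)=\mathbb{E}V_1-\alpha\,\mathbb{E}\tau_1=0$ this forces $V_1=\alpha\tau_1$ almost surely. As $V_1\in\mathbb{Z}$ and $\alpha\ge\varepsilon>0$ by Theorem~\ref{mainthmlln}, $\tau_1$ would be supported on the countable set $\alpha^{-1}\mathbb{Z}$; but $\tau_1$ has a non-atomic component, since conditionally on the positive-probability event that the first candidate path at a break point goes extinct it contains that extinction time --- which has no atoms --- as a summand. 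Hence $\mathbb{P}(\tau_1\in\alpha^{-1}\mathbb{Z})<1$, a contradiction, so $\sigma^2>0$. The step I expect to require the most care is the construction of the break times: making it precise enough that the pairs $(\tau_j,V_j)_{j\ge1}$ are genuinely i.i.d., and bounding the second moment of the one-epoch displacement $V_1$; both should follow by re-running the $M_i$ construction of Theorem~\ref{mainthmrttail} with the tail bound of Theorem~\ref{mainthmsurvivaltime} substituted for the fixed-horizon estimate used there.
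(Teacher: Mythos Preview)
Your approach is workable but takes a genuinely different route from the paper. The paper does \emph{not} pass through the identification $\sigma^2=\operatorname{Var}(V_1-\alpha\tau_1)/\mathbb{E}\tau_1$ or argue by contradiction. Instead it gives a direct lower bound on $\operatorname{Var}(\mathcal{R}(\eta_t))$ via the Galves--Presutti measurable-partition trick: one introduces integer renewal times $R_i$ at which $\mathcal{R}(\eta_\cdot)$ is nondecreasing on $[R_i,R_i+1]$ and the auxiliary process $\eta^{R_i+1}$ survives, splits $\mathcal{R}(\eta_t)=\sum_i S_i+\sum_i K_i$ with $K_i=\mathcal{R}(\eta_{R_i+1})-\mathcal{R}(\eta_{R_i})$, and conditions on the partition $\Pi$ determined by $(N,\{R_i\},\{S_i\})$. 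The point is that, conditionally on $\Pi$, the $K_i$ are i.i.d.\ with the law of $\mathcal{R}(\eta^0_1)$ given $A_0$, so $\operatorname{Var}(\mathcal{R}(\eta_t))\ge \mathbb{E}[\operatorname{Var}(\mathcal{R}(\eta_t)\mid\Pi)]=c\,\mathbb{E}N$, and one then shows $\mathbb{E}N\ge t/(8\rho)$ using the stretched-exponential tail on the inter-renewal gaps. This avoids entirely the need for uniform integrability, the renewal--reward CLT, or the non-atomicity argument. What the paper's route buys is robustness: no CLT input is required, and uniformity in $\eta_0$ is automatic because the law of $K_i\mid\Pi$ and the bound on $\mathbb{E}(R_{i+1}-R_i)$ do not depend on $\eta_0$. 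What your route buys is a conceptual identification of the limiting variance and an arguably slicker endgame once the blocks are in place.

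Two points to watch if you pursue your version. First, citing Theorem~\ref{mainthmbrownianmotion} for the weak convergence is circular, since in the paper its proof invokes the present lemma; you should instead cite Lemma~\ref{cltbaselemma} together with the (lemma-independent) tail-equivalence argument of Section~7.5. Second, the step you already flag as delicate really is the crux: because ``survives for all time'' is not a stopping-time event, the $M_i$-scheme of Theorem~\ref{mainthmrttail} does not transfer verbatim, and showing that $(\tau_j,V_j)_{j\ge1}$ are genuinely i.i.d.\ requires the full Kuczek look-ahead construction rather than the strong Markov property alone. In particular, when a candidate path dies the new right edge of $\eta$ is \emph{not} a functional of the auxiliary process just run, so you must use that from a true break time the entire future right edge is carried by the surviving auxiliary process. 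Your non-atomicity argument for $\tau_1$ also depends on the break times being non-integer (containing a continuous extinction-time summand), so be sure your construction retains that feature; the paper's $R_i$ are integers, which would break that step.
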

\begin{proof}
Let $t > 0$ be a constant and let $\{ \eta_s \}_{s \geq 0}$ denote the right edge modified contact process with infection rates $\lambda_i =  \lambda_c$ and $\lambda_e = \lambda_c + \varepsilon$, so that $\eta_0 \in \Sigma^{\ominus}$. We define the events for $n \in \{0,1,2, \ldots \}$,
\begin{equation}
\label{driftevents}
\begin{aligned}
    A_n &= \{ \mathcal{R}(\tilde{\eta}_t) \text{ is non-decreasing in } [n,n+1] \} \\
    B_n &= \{ \{ \eta^n_s\}_{s \geq 0} \text{ survives} \}. 
\end{aligned}
\end{equation}
We will define the sets of times, $R_0 = -1$, and for $i \geq 1$,
\begin{equation}
    R_i = \inf \{ m \in \mathbb{Z}, \ m \geq R_{i-1} +1 \mid A_m  \cap B_{m+1} \text{ occurs} \}.
\end{equation}
We also define the random variables,
\begin{equation}
    \begin{aligned}
        S_i &= \mathcal{R}(\eta_{R_{i+1}}) - \mathcal{R}(\eta_{R_{i} + 1}) \text{ for } i \geq 0 \\
        K_i &= \mathcal{R}(\eta_{R_{i}+1}) - \mathcal{R}(\eta_{R_i}) \text{ for } i \geq 1 \\
        N &= \sup \{ i \in \mathbb{Z} \mid R_i \leq t - 1\}.
    \end{aligned}
\end{equation}
For notation let $R_{N+1} = t$, and $S_{N} = \mathcal{R}(\eta_t) - \mathcal{R}(\eta_{N+1})$. We can write,
\begin{equation}
    \mathcal{R}(\eta_t) = \sum_{i=0}^N S_i + \sum_{i=1}^N K_i
\end{equation}
For any $s > 0$, let $\tau(s)$ denote the first hitting time of $\{\eta^s_{s'}\}_{s' \geq 0}$ to the all susceptible state $\emptyset$. Let $r_{0} = -1$. For $1 \leq i \leq n$ let $r_i$ and $k_i$ be constants, and let for $0 \leq j \leq n$ let $s_j$ be constants. We define the events,
\begin{equation}
    \begin{aligned}
        C_{n,t} &= A_n \cap \{ \tau(n+1) \geq t -n  -1 \} \\
    D_{u,v} &= \cap_{n=u}^{v-1} C_{n,v}^c \\
        F_i &= D_{r_i+1,r_{i+1}} \cap \{ \tau(r_i + 1) \geq r_{i+1}\text{, } \mathcal{R}(\eta^{r_i + 1}_{r_{i+1}-r_{i} - 1}) = s_i \} \\
        G_i &= A_{r_i} \cap \{ \mathcal{R}(\eta^{r_i}_1) = k_i \}. 
    \end{aligned}
\end{equation}
We now define a countable partition $\Pi$ of our outcome space $\Omega$ so that two outcomes $\omega$ and $\omega'$ are in the same partition if and only if,
\begin{enumerate}
    \item $N(\omega) = N(\omega')$
    \item $R_i(\omega) = R_i(\omega')$ for all $0 \leq i \leq N$
    \item $S_i(\omega) = S_i(\omega')$ for all $0 \leq i \leq N$
\end{enumerate}
Using the same argument as in \cite{durrett1989contact} we can write,
\begin{equation}
    \label{pipartitionprobability}
    \begin{aligned}
        & \mathbb{P}\left( N = n, \  R_i = r_i \text{, } K_i = k_i \text{ for } 1 \leq i \leq n, \ S_j = s_j \text{ for } 0 \leq j \leq n \right) \\
        &= \mathbb{P} \left( F_0 \left(  \cap_{i=1}^n F_i \cap G_i \right) \cap B_{r_{n}+1} \right).
    \end{aligned}
\end{equation}
Note the collection of events $\{ F_i \}_{i=0}^{n-1}$ and $\{ G_i \}_{i = 1}^n$ are each mutually independent, and the sigma field $\sigma(\{F_i \}_{i=1}^{n-1}, \{ G_i\}_{i=1}^n )$ is independent of the sigma field $\sigma(F_n, B_{r_n + 1})$. Using independence and (\ref{pipartitionprobability}) we have,
\begin{equation}
    \begin{aligned}
        & \mathbb{P}\left( K_i = k_i \text{ for } 1 \leq i \leq n \mid N = n, \text{ } R_i = r_i \text{ for } 1 \leq i \leq n, \text{ } S_j = s_j \text{ for } 0 \leq j \leq n \right) \\
        &= \frac{\mathbb{P}\left( F_0 \cap \left( \cap_{i=1}^n F_i \cap G_i  \right) \cap B_{r_{n}+1} \right)}{\mathbb{P}\left( F_0 \cap \left( \cap_{i=1}^n F_i \cap A_{r_i}  \right) \cap B_{r_{n}+1} \right)} \\
        &= \prod_{i=1}^n \frac{\mathbb{P}\left( G_i \cap A_{r_i} \right)}{\mathbb{P}(A_{r_i})}
    \end{aligned}
\end{equation}
Thus conditioned on $\Pi$ the collection of variables $\{ K_i \}_{i=1}^N$ are iid with the law of $\mathcal{R}(\eta_1)$ conditioned on the event $A_0$. Hence by the same argument as in \cite{galves1987edge} and \cite{durrett1989contact} we have,
\begin{equation}
\label{useconditionalindependence}
    \begin{aligned}
        \text{Var}\left( \mathcal{R}(\eta_t) \right) &\geq \mathbb{E}\left( \text{Var}\left(\mathcal{R}(\eta_t) \mid \Pi \right) \right) \\
        &= \mathbb{E}\left( \sum_{i=1}^N \text{Var}\left( K_i \mid \Pi \right) \right) \\
        &= \mathbb{E}\left( c N \right).
    \end{aligned}
\end{equation}
Where $c > 0$ is a constant independent of $t$. 

We will now show for all there exists a universal constant $c' > 0$ independent of $t$ so that for any $i \geq 1$,
\begin{equation}
    \mathbb{E}(R_{i+1} - R_i) = c' < \infty.
\end{equation}
It follows that the difference $R_{i+1} - R_i$ is a measurable function of $\{ \eta^{R_i}_s \}_{s \geq 0}$, where $\{ \eta^{R_i}_s \}_{s \geq 0}$ has the law of the right edge modified contact process with an infection at the origin conditioned on the event of survival. 

For now we continue supposing that $\{ \eta_s \}_{s \geq 0}$ is a copy of the right edge modified contact process so that $\eta_0 \in \Sigma^{\ominus}$. We note that for all $s \geq 0$ that $\mathbb{P}(A_s \cap B_{s+1} ) = e^{-1} q'' > 0$. Let $W_0 = 0$, $U_0 = 0$, and for all $m \in \mathbb{N}$,
\begin{equation}
\begin{aligned}
U_m &= \sum_{i=0}^{m-1} W_i \\
    E_m &= \{ \mathcal{R}(\eta^{U_m}_s) \text{ decreases in } [0,1]\} \\
    X_m &= \left\lceil \inf \{ s \geq 0 \mid \mathcal{R}(\eta^{U_m}_s) \text{ decreases} \} 
\right\rceil \\
Y_m &= \left\lceil 1 + \tau(U_m + 1) \right\rceil \\
W_m &= \mathbbm{1}\{ E_m\} X_m + \mathbbm{1}\{ E_m^c \} Y_m \\
Q &= \inf\{ m \in \mathbb{N} \mid W_m = \infty \}.
\end{aligned}
\end{equation}
Here we run a total of $Q$ trials until we see the renewal event that begins the next epoch $R_{i+1}$. Here $W_m$ indicates the total time required for the renewal event to fail for trial $m$, rounded up to the next highest integer, while $U_m$ represents the total time elapsed when renewal event for trial $m$ begins. 

We note $Q$ follows the Geometric$(e^{-1} q'')$ distribution. Using (\ref{survivalfunctionbound}) one can show there exists universal constants $c_{18}$ and $c_{19} > 0$ so that for all $m \in \mathbb{N}$ and $n > 0$,
\begin{equation}
\label{wmtailbound}
    \mathbb{P}\left( W_m > n \right) \leq c_{18} \exp\left(-c_{19} n^{\frac{\delta}{4}} \right)
\end{equation}
Using that $Q \sim \text{Geometric}(e^{-1}q'')$, (\ref{wmtailbound}) and the same reasoning as Lemma \ref{mixingsetuplemma}, one can show there exist universal constants $c_{20}$ and $c_{21} > 0$ so that for all $n > 0$
\begin{equation}
    \label{tailsumwm}
    \mathbb{P}\left( \sum_{m=1}^Q W_m > n \right) \leq c_{20} \exp \left(-c_{21} n^{\frac{\delta}{8}} \right).
\end{equation}
Using now that $R_{i+1} - R_i$ is a measurable function of $\{ \eta^{R_i}_s \}_{s \geq 0 }$, which has the law of the right edge modified contact process with an infection at the origin conditioned on the event $D$ of survival. Using this and (\ref{tailsumwm}) we can write for any $n > 0$,
\begin{equation}
\label{rnl1}
    \begin{aligned}
        & \mathbb{P}\left( R_{i+1} - R_i > n \right) \\
        &= \mathbb{P}\left( \sum_{m=1}^Q W_m > n \mid D \right) \\
        & \leq (q'')^{-1} \mathbb{P}\left(\sum_{m=1}^Q W_m \geq n \right) \\
        & \leq (q'')^{-1} c_{20} \exp \left( -c_{21} n^{\frac{\delta}{8}}\right)
    \end{aligned}
\end{equation}
Based on (\ref{rnl1}), we can conclude that $R_{i+1}-R_i$ has a uniformly bounded mean for all $i \geq 1$. An identical argument can be used to show $R_1$ also has a finite mean.

Define the constant,
\begin{equation}
    \label{definerho}
    \rho = \sup \left\{ \mathbb{E}\left( R_i - R_{i-1} \right) \mid i \in \mathbb{N} \right\} < \infty .
\end{equation}
Using Markov's inequality we then have when $t > 0$ is sufficiently large,
\begin{equation}
    \label{markovinequalityn}
    \begin{aligned}
    & \mathbb{P}\left( N \leq \left\lfloor \frac{t}{4 \rho} \right\rfloor \right) \\
    &= \mathbb{P}\left( \sum_{i=1}^{\lfloor \frac{t}{4 \rho} \rfloor} R_i - R_{i-1} \geq t - 1 \right) \leq \frac{1}{2}.
    \end{aligned}
\end{equation}
Using (\ref{markovinequalityn}) we have when $t > 0$ is sufficiently large,
\begin{equation}
    \label{nmeanbound}
    \mathbb{E}\left(N \right) \geq \left\lfloor \frac{t}{8 \rho} \right\rfloor.
\end{equation}
Thus combining (\ref{useconditionalindependence}) and (\ref{markovinequalityn}) we have completed the Lemma.
\end{proof}
\subsection{Proof of Theorem 2}
We now provide a proof of Theorem \ref{mainthmbrownianmotion}.
\begin{proof}[Proof of Theorem \ref{mainthmbrownianmotion}]
    Let $\{\eta_t \}_{t \geq 0}$ be the right edge modified contact process with infection rates $\lambda_i = \lambda_c$, $\lambda_e = \lambda_c + \varepsilon$ for $\varepsilon > 0$, and $\eta_0 \in \Sigma^{\ominus}$. Let the random variable $T$ be defined as in section 4.3. When $T \leq t$ we can write using attractiveness from the right edge,
    \begin{equation}
        \mathcal{R}(\eta_t) = \mathcal{R}(\eta_T) + \mathcal{R}(\eta^T_{t - T}).
    \end{equation}
    Since the variable $T$ is independent of $\eta_0 \in \Sigma^{\ominus}$, and $T < \infty$ almost surely, this allows us to deduce that the law of the increments of the right edge from $\eta_t$ and $\tilde{\eta}_t$, the process sampled from the invariant measure $\tilde{\mu}$ are tail equivalent. Further, using Lemma \ref{cltbaselemma} and \ref{lemmadriftcoefficient} that the drift coefficient $\sigma^2$ of the Brownian motion approximation for $\mathcal{R}(\tilde{\eta}_t)$ is strictly positive. This completes the theorem.
\end{proof}

\bibliographystyle{plain}
\bibliography{refs}
\end{document}